\def\nd{\noindent}
\def\thend{\rule{3mm}{3mm}}
\newtheorem{theorem}{Theorem}[section]
\newtheorem{prop}{Proposition}[section]
\newtheorem{lemma}{Lemma}[section]
\newtheorem{cor}{Corollary}[section]
\numberwithin{equation}{section}
\begin{document}
\title{A Berestycki-Lions type result for a class of degenerate elliptic problems involving the Grushin Operator}
\author{\sf Claudianor O. Alves \thanks{Claudianor Alves was partially supported by CNPq/Brazil Proc. 304804/2017-7 } \,\,\, and \,\,\, Angelo R. F. de Holanda }
\maketitle

\begin{abstract}
In this work we study the existence of nontrivial solution  for the following class of semilinear degenerate elliptic equations
$$
-\Delta_{\gamma} u + a(z)u = f(u) ~~ \mbox{in} ~~ \mathbb{R}^{N},
$$
where $\Delta_{\gamma}$ is known as the {\it Grushin operator}, $z:=(x,y)\in\mathbb{R}^{m}\times\mathbb{R}^{k}$ and $m+k=N\geq 3$, $f$ and $a$ are continuous function satisfying some technical conditions. In order to overcome some difficulties involving this type of operator, we have proved some compactness results that are crucial in the proof of our main results. For the case $a=1$, we have showed a Berestycki-Lions type result. 

\end{abstract}

{\scriptsize \textbf{2000 Mathematics Subject Classification:} 35A15,35J70 , 35R01}

{\scriptsize \textbf{Keywords:} Variational Methods, degenerate elliptic equations, PDE on manifolds}


\section{Introduction}
In this paper we study the existence of nontrivial solutions for the following class of semilinear degenerate elliptic equations
$$ 
-\Delta_{\gamma} u + a(z)u  =  f(u) \quad  \mbox{in} \quad \mathbb{R}^{N}, \leqno{(P)_{a}}
$$
where $\Delta_{\gamma}$ is given by
\begin{equation}
\Delta_{\gamma}u(z)=\Delta_{x}u(z)+|x|^{2\gamma}\Delta_{y}u(z)
\end{equation}
is known as the {\it Grushin operator}, $\Delta_x$ and $\Delta_y$ are the Laplace operators in the variables $x$ and $y$, respectively, with $z:=(x,y)\in\mathbb{R}^{m}\times\mathbb{R}^{k}$ and $m+k=N\geq 3$. Here, $\gamma\geq 0$ is a real number and $N_{\gamma}=m+(1+\gamma)k$ is the appropriate homogeneous dimension. The nonlinearity $f:\mathbb{R} \longrightarrow \mathbb{R}$ and the weight $a:\mathbb{R}^{N}\longrightarrow\mathbb{R}$ are continuous functions that satisfy some technical conditions. 

To begin with, we would like to observe that $\Delta_{\gamma}$ is elliptic for $x\in\mathbb{R}^{m}$, $|x|\neq 0$ and degenerates on the manifold $\{0\}\times\mathbb{R}^{k}$. This operator was initially studied by {\it Grushin} in \cite{Grushin1970} and \cite{Grushin1971}, and it appears in the study of P.D.E. on  manifolds. For example, in Kogoj and Lanconelli \cite[Page 4641]{Kogoj&Lanconelli2012NA} the authors showed that problem $(P)_a$ appears in the study of a P.D.E. in the Heinsenberg group, while in	Bhakta and Sandeep \cite{MS}, 
it was showed a strong relation with a P.D.E on the Hyperbolic space. Furthermore, if $\gamma=0$, the operator $\Delta_{0}$ is the well-known classical Laplace operator. On the other hand, $\Delta_{\gamma}$ is contained in a family of operators of the kind 
\begin{equation}\label{general_operator}
\Delta_{\lambda}:=\sum_{i=1}^{N}\partial_{x_i}(\lambda_i^2\partial_{x_i}), ~~ \lambda=(\lambda_1,\cdots,\lambda_N),
\end{equation}
which has been extensively studied. See, for instance Anh and My\cite{Anh&My2016CCVE}, Kogoj and Lanconelli \cite{Kogoj&Lanconelli2012NA}, Luyen and tri \cite{Luyen&Tri2019CVEE}, Rahal and Hamdani  \cite{Rahal&Hamdani2018JFT} and references therein. 

At the last years some authors have dedicated a special attention for problems involving the {\it Grushin operator}, which we quote, Chung \cite{Chung2014CKMS}, D'Ambrosio \cite{DAmbrosio2003PAMS}, Monti \cite{Monti2006CPDE}, Monti and Morbidelli \cite{Monti&Morbidelli2006DMJ}, and more recently Duong and Nguyen \cite{Duong&Nguyen2017EJDE}, Duong and Phan \cite{Duong&Phan2017JMAA}, Liu, Tang and Wang \cite{Liu&Tang&Wang2019AMPA}, Loiudice \cite{Loiudice2019AMPA}.

The main motivation to study the above problem $(P)_a$ comes from of the papers due to Berestycki and Lions \cite{berest}, and Berestycki, Gallouet and Kavian \cite{BGK} that have considered the case $\gamma=0$ and $a(z)=1$ for all $z\in\mathbb{R}^N$. More precisely, in \cite{berest}, Berestycki and Lions have considered the existence of solution for 
\begin{equation}\label{BL}
- \Delta u   = g(u), \quad \mbox{in} \quad \mathbb{R}^N, 
\end{equation} 
by assuming that $N \geq 3$ and the following conditions on $g$:
$$
- \infty < \liminf_{s \to 0^+}\frac{g(s)}{s} \leq \limsup_{s \to 0^+}\frac{g(s)}{s}\leq -m<0, \leqno{(g_1)} 
$$
$$
\limsup_{s \to +\infty}\frac{g(s)}{s^{2^{*}-1}}\leq 0, \leqno{(g_2)}
$$
$$
\mbox{there is} \quad \xi>0 \, \, \mbox{such that} \,\, G(\xi)>0, \leqno{(g_3)}
$$
where $G(s)=\int_{0}^{s}g(t)\,dt$.

In \cite{BGK}, Berestycki, Gallouet and Kavian have studied the case where $N=2$ and the nonlinearity $g$ possesses an exponential growth of the type
$$
\limsup_{s \to +\infty}\frac{g(s)}{e^{\beta s^2}}=0, \quad \forall \beta >0.
$$

In the two  above mentioned papers, the authors obtained a solution for (\ref{BL}) by solving in the first moment a minimization problem, where the properties of the radially symmetric functions play an essential rule in their approach. After that, by using Lagrange multipliers, they were able to get a nontrivial solution for (\ref{BL}).

A version of the problem (\ref{BL}) for the critical case have been  made in Alves, Souto and Montenegro \cite{AlvesSoutoMontenegro} for $N \geq 3$ and $N=2$, see also Zhang and Zhou \cite{ZZ} for $N=3$. The reader can found in Alves, Figueiredo and Siciliano \cite{AGS}, Chang and Wang \cite{ChangWang} and Zhang, do \'O and Squassina \cite{ZOS} the same type of results involving the fractional Laplacian operator, more precisely, for a problem like 
\begin{equation}\label{fracionario}
	(- \Delta)^{\alpha}u = g(u), \quad \mbox{in} \quad \mathbb{R}^N, 
\end{equation}
with $\alpha \in (0,1)$ and $N \geq 1$. For more details about this subject, we would like to cite the references found in the above mentioned papers.

Recently, Alves, Duarte and Souto \cite{ADS} have proved an abstract theorem that was used to solve a large class of Berestycki-Lions type problems, which includes Anisotropic operator, Discontinuous nonlinearity, etc.. In that paper, the authors have used the deformation lemma together with a notation of Pohozaev set to prove the abstract theorem.

Initially, in the present paper, we will deal with the existence of nontrivial solution for the problem $(P)_1$, that is, 
$$ 
-\Delta_{\gamma} u + u  =  f(u) \quad  \mbox{in} \quad \mathbb{R}^{N}, \leqno{(P)_{1}}
$$
for the {\it positive mass case}. In this case, the nonlinearity $f$ satisfies the following conditions:
\begin{itemize}
	\item [$(f_1)$] $f(s)/s = o(1)$ as $s \to 0$ ;
	\item [$(f_2)$] There exists $q \in (2,2^*_{\gamma})$ such that
	$$
	\limsup_{|s| \to +\infty}\frac{|f(s)|}{|s|^{q-1}} <+\infty, \quad \forall s \in \mathbb{R},
	$$
	where $2^*_{\gamma}=\frac{2N_{\gamma}}{N_{\gamma}-2}$;
	\item [$(f_3)$] There exists $s_0>0$ such that $F(s_0)>\frac{1}{2}|s_0|^{2}$, where $ \displaystyle F(s) = \int_0^s f(t)dt$.
\end{itemize}

Using the above assumptions we are ready to state our first result.
\begin{theorem}\label{Theorem1} ({\bf The positive mass case :  $a(z)=1$}) Assume $0 \leq \gamma < 1$, $(f_1),(f_2)$ and $(f_3)$. Then problem $(P)_1$ has at least one nonnegative nontrivial weak solution.
\end{theorem}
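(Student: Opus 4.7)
My plan is to apply the mountain pass theorem in a Grushin--Sobolev space and to overcome the loss of compactness caused by the $y$-translation invariance by restricting to the subspace of jointly radial functions.

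Let $E := S^1_\gamma(\mathbb{R}^N)$ be the Grushin--Sobolev space with norm $\|u\|^2 = \int(|\nabla_x u|^2 + |x|^{2\gamma}|\nabla_y u|^2 + u^2)\,dz$ and let $E_r \subset E$ be the subspace of functions depending only on $(|x|,|y|)$; this subspace is preserved by the $SO(m)\times SO(k)$-symmetry of $(P)_1$. To secure nonnegativity of any nontrivial critical point, I would replace $f$ by $\tilde f(s) = f(s)$ for $s\geq 0$ and $\tilde f(s) = 0$ otherwise, and consider
$$I(u) = \frac{1}{2}\|u\|^2 - \int_{\mathbb{R}^N} \tilde F(u)\,dz, \qquad u \in E_r.$$
By Palais' principle of symmetric criticality, critical points of $I|_{E_r}$ are critical points of $I$ on $E$, and testing against $u^- = \min(u,0)$ forces $u\geq 0$.

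I would next verify the mountain pass geometry. Conditions $(f_1)$ and $(f_2)$ together with the Grushin--Sobolev embedding $E \hookrightarrow L^p(\mathbb{R}^N)$ for $p \in [2, 2^*_\gamma]$ give $I(u) \geq \alpha \|u\|^2 - C\|u\|^q$, so $I \geq \rho > 0$ on some sphere $\|u\| = r$. For the second geometric condition I would use $(f_3)$: choosing a smooth Grushin-radial cutoff $\eta_R$ equal to $1$ on a Grushin ball $B_R^\gamma$ and supported in $B_{2R}^\gamma$, the function $v_R = s_0 \eta_R$ satisfies $\int(\tilde F(v_R) - \tfrac12 v_R^2)\,dz \gtrsim (F(s_0) - \tfrac12 s_0^2)|B_R^\gamma|$, which outweighs the gradient contribution concentrated in the thin annulus. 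Thus $I(v_R) \to -\infty$ and $e = v_{R_0}$ works for $R_0$ large.

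The mountain pass theorem then produces a Palais--Smale sequence $(u_n) \subset E_r$ at a level $c > 0$. Since no Ambrosetti--Rabinowitz condition is assumed, boundedness is not automatic; I would invoke Jeanjean's monotonicity trick applied to the family $I_\theta(u) = \tfrac12 \|u\|^2 - \theta \int \tilde F(u)\,dz$ (for $\theta$ in a neighborhood of $1$) to obtain a bounded Cerami sequence at almost every nearby level, and combine this with a Pohozaev-type identity for $-\Delta_\gamma + 1$ to recover a bounded Cerami sequence at the level $c$ itself.

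The decisive step---and the expected main obstacle---is compactness. Since $(P)_1$ is $y$-translation invariant, $E \hookrightarrow L^p$ is never compact on the full space. This is where the Strauss-type compactness result announced in the introduction plays the crucial role: it should yield $E_r \hookrightarrow\hookrightarrow L^p(\mathbb{R}^N)$ for every $p \in (2, 2^*_\gamma)$. With this in hand, along a subsequence $u_n \rightharpoonup u$ in $E_r$ and $u_n \to u$ in $L^q$; the subcritical growth $(f_2)$ then allows passage to the limit in $I'(u_n) \to 0$, giving $I'(u) = 0$. If $u \equiv 0$, then $(f_1)$--$(f_2)$ and the compact embedding force $\int \tilde F(u_n) \to 0$ and $\int \tilde f(u_n) u_n \to 0$, so $I'(u_n)u_n = o(1)$ yields $\|u_n\| \to 0$ and $c = 0$, contradicting $c \geq \rho > 0$. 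Hence $u$ is a nontrivial, nonnegative weak solution of $(P)_1$.
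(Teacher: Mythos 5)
Your overall architecture --- restriction to the doubly radial subspace $\mathcal{H}^{1,2}_{\gamma,rad_{x,y}}(\mathbb{R}^{N})$, the compact embedding into $L^{p}(\mathbb{R}^{N})$ for $p\in(2,2^{*}_{\gamma})$, the vanishing argument for nontriviality, Palais' symmetric criticality, and the truncation $f(s)=0$ for $s\le 0$ with the test function $u^{-}$ --- coincides with the paper's. The genuine gaps are in the step that produces a bounded Palais--Smale sequence. First, Jeanjean's monotonicity trick applied to $I_{\theta}(u)=\tfrac12\|u\|^{2}-\theta\int \tilde F(u)\,dz$ requires the perturbed term $B(u)=\int\tilde F(u)\,dz$ to be nonnegative, since the whole argument rests on the monotonicity (hence a.e.\ differentiability) of $\theta\mapsto c_{\theta}$; under $(f_1)$--$(f_3)$ alone $F$ may well be negative on part of its range, so $B$ is sign-changing and the theorem does not apply as stated (this is repairable by splitting $f=f_{1}-f_{2}$ with $f_{1},f_{2}\ge 0$ and perturbing only the $F_{1}$-term, but that ingredient is missing). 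Second, and more seriously, your plan leans on ``a Pohozaev-type identity for $-\Delta_{\gamma}+1$'' satisfied by the critical points $u_{\theta}$ in order to bound them as $\theta\to 1$. For the degenerate operator $\Delta_{\gamma}$ such an identity (relative to the anisotropic dilations $(x,y)\mapsto(tx,t^{1+\gamma}y)$ and the homogeneous dimension $N_{\gamma}$) is not available off the shelf for weak solutions in $\mathcal{H}^{1,2}_{\gamma}$: its derivation needs regularity up to the degeneracy set $\{x=0\}$ and decay at infinity, none of which is established. This is exactly what the paper circumvents by using the augmented functional $\tilde I(s,u)=I(u(e^{-s}x,e^{-(1+\gamma)s}y))$ in the spirit of Jeanjean and Hirata--Ikoma--Tanaka: the special Palais--Smale sequence carries the Pohozaev-type information for free through the condition $\frac{\partial}{\partial s}\tilde I(s_{n},u_{n})\to 0$, with no regularity required, and boundedness then follows from the resulting system of three relations. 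As written, your boundedness step rests on an unproved identity.

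A smaller but real problem is the mountain pass geometry. With $v_{R}=s_{0}\eta_{R}$, $\eta_{R}=1$ on $B_{R}^{\gamma}$ and supported in $B_{2R}^{\gamma}$, the transition region has measure comparable to $|B_{R}^{\gamma}|\sim R^{N_{\gamma}}$, so its (possibly negative) contribution to $\int\bigl(\tilde F(v_{R})-\tfrac12 v_{R}^{2}\bigr)dz$ is of the same order $R^{N_{\gamma}}$ as the gain from the core, and the asserted lower bound does not follow; the classical Berestycki--Lions remedy is a collar of unit width in the Grushin gauge, which makes both the gradient term and the collar contribution $O(R^{N_{\gamma}-1})$ against a core gain of order $R^{N_{\gamma}}$. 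Incidentally, done this way your construction never uses $\gamma<1$, whereas in the paper that hypothesis enters precisely at this point: the isotropic dilation $\varphi(z/t)$ produces the exponent $t^{N+2\gamma-2}$, which must be dominated by $t^{N}$. The remaining steps (passage to the limit in $I'$, the vanishing alternative using the compact embedding, symmetric criticality, nonnegativity) are correct and match the paper.
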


Our next result is associated with the  {\it zero mass case} due to Berestycki and Lions \cite{berest}, more precisely, we intend to find a solution for the following class of problems 
$$ 
-\Delta_{\gamma} u =  f(u) \quad  \mbox{in} \quad \mathbb{R}^{N}, \leqno{(P)_0}
$$
where $f$ satisfies some conditions that will be fixed later on.
 
Problems this type has attracted the attention of many mathematicians with regard to the non-existence of a solution. For instance, if $\Delta_{H}$ is the Heisenberg Laplacian on $H^N=\mathbb{C}^N\times\mathbb{R}$ and $u$ is radial in the variable $x$, $u=u(|x|,y)$, then $-\Delta_{H}u =\Delta_{x}u+4|x|^ 2\partial_{yy}u$ is a Grushin operator with $\gamma=1$, and so, $(P)_0$ is strongly related to equation
\begin{equation}
-\Delta_{H} u =  f(u) \quad  \mbox{in} \quad H^{N}.
\end{equation}
Birindelli and Prajapat \cite{Birindelli&Prajapat} studied the nonexistence of cylindrical solution for above problem with $f(u)= u^p$, $0<p<Q+2/Q-2$, where $Q$ is the homogeneous dimension of $H^N$. Recently, Yu \cite{Yu2013} studied the
Liouville type theorem in the Heisenberg group for general nonlinearity. In \cite{Yu2015}, Yu  studied the nonexistence of positive solutions  for a general Grushin operator, and the main tool in this paper is the moving plane method.  Monti and Morbidelli in \cite{Monti&Morbidelli2006DMJ} investigated the uniqueness and symmetry of solutions for the problem $(P)_0$ with critical growth and introduced a suitable Kelvin-type transform in this context. Finally, related to the bounded domains, Kogoj and Lanconelli \cite{Kogoj&Lanconelli2012NA} showed regularity, existence and Pohozaev-type nonexistence results for the Dirichlet problem 
\begin{equation}
\left \{
\begin{array}{rclcl}
-\Delta_{\lambda} u & = & f(u) & \mbox{in} & \Omega, \\
u & = & 0 & \mbox{on} & \partial\Omega,
\end{array}
\right.
\end{equation}
where $\Omega$ is a bounded open subset of $\mathbb{R}^N$, for general nonlinearities $f$. 

In order to solve $(P)_0$, we are assuming the following conditions on function $f$: 
\begin{itemize}
	\item [$(f_4)$]There are $q >2^*_{\gamma}$ and $c,\delta >0$ such that  
	$$
	|f(s)| \leq c|s|^{q-1}, \quad \mbox{for} \quad  |s| \leq \delta.
	$$
	\item [$(f_5)$]  $\displaystyle \lim_{|s| \to +\infty}\frac{f(s)}{|s|^{2^{*}_{\gamma}-1}}=0.$
	\item[$(f_6)$]  There is $s_0>0$ such that $F(s_0)>0$.
\end{itemize}
 
\begin{theorem}\label{Theorem1'} ({\bf The zero mass case: $a(z)=0$}) Assume $0 \leq \gamma < 1$, $(f_4),(f_5)$ and $(f_6)$. Then problem $(P)_0$ has at least one nonnegative nontrivial weak solution.
\end{theorem}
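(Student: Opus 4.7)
The plan is to follow the Berestycki--Lions zero-mass strategy, adapted to the Grushin setting. The natural framework is the homogeneous Sobolev space $\mathcal{D}^{1,2}_\gamma(\mathbb{R}^N)$, defined as the completion of $C_c^\infty(\mathbb{R}^N)$ under the norm
\[
\|u\|^2_{\mathcal{D}^{1,2}_\gamma} := \int_{\mathbb{R}^N}\bigl(|\nabla_x u|^2 + |x|^{2\gamma}|\nabla_y u|^2\bigr)\,dz,
\]
which embeds continuously into $L^{2^*_\gamma}(\mathbb{R}^N)$ via the Grushin--Sobolev inequality. A preliminary observation is that $(f_4)$ yields $|F(s)| \leq C\delta^{q-2^*_\gamma}|s|^{2^*_\gamma}$ for $|s| \leq \delta$ (since $q > 2^*_\gamma$), while $(f_5)$ gives $|F(s)| \leq \varepsilon|s|^{2^*_\gamma}$ for $|s|$ large; together they produce a global bound $|F(s)| \leq C|s|^{2^*_\gamma}$. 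Hence $J(u) = \tfrac{1}{2}\|u\|^2_{\mathcal{D}^{1,2}_\gamma} - \int_{\mathbb{R}^N} F(u)\,dz$ is well defined and continuous on $\mathcal{D}^{1,2}_\gamma$. Replacing $f$ by $f\cdot\chi_{[0,\infty)}$ from the start, I would aim for a nonnegative solution.

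Next, I would recast $(P)_0$ as the scale-invariant constrained minimization
\[
T_\ast := \inf\left\{\int_{\mathbb{R}^N}|\nabla_\gamma u|^2\,dz \,:\, u \in \mathcal{D}^{1,2}_\gamma,\ \int_{\mathbb{R}^N}F(u)\,dz = 1\right\}.
\]
The constraint set is nonempty thanks to $(f_6)$: take a smooth truncation of the constant $s_0$ on a large Grushin ball, then apply the Grushin dilation $\delta_t(x,y) = (tx,t^{1+\gamma}y)$ --- under which the left integrand scales as $t^{2-N_\gamma}$ and $\int F(\cdot)$ scales as $t^{-N_\gamma}$ --- to normalize the constraint to $1$. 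Any minimizing sequence $\{u_n\}$ is bounded in $\mathcal{D}^{1,2}_\gamma$ and so admits a weak limit $u_\ast$.

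The decisive step is proving $u_\ast \neq 0$ together with $\int F(u_n) \to \int F(u_\ast)$. Here I would invoke the compactness lemmas the authors announce in the abstract, combined with a Brezis--Lieb type splitting for $\int F(u_n)$ and a Lions-style concentration--compactness argument rephrased with Grushin dilations in place of Euclidean translations; the compact Grushin--Sobolev embeddings on bounded sets available in the range $0 \leq \gamma < 1$ should rule out vanishing and dichotomy and thus produce a minimizer. The associated Euler--Lagrange equation reads $-\Delta_\gamma u_\ast = \theta f(u_\ast)$ for some Lagrange multiplier $\theta > 0$; a further Grushin dilation $v(z) := u_\ast(\delta_t z)$ with the appropriate $t$ absorbs $\theta$ and yields a weak solution of $(P)_0$. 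Nonnegativity and regularity then follow from the initial modification of $f$, standard elliptic regularity for $\Delta_\gamma$, and a Grushin maximum principle.

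The principal obstacle will be this compactness step. The Grushin operator admits only cylindrical, not full rotational, symmetry, so the Schwarz rearrangement underlying the classical Berestycki--Lions argument is unavailable; instead one must rule out both escape of mass to infinity and concentration of the minimizing sequence along the degenerate manifold $\{x=0\}$. It is precisely here that the restriction $0 \leq \gamma < 1$ and the ad hoc compactness lemmas referenced in the abstract become essential.
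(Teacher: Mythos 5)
Your proposal takes a genuinely different route from the paper: you follow the original Berestycki--Lions scheme (constrained minimization of the Dirichlet energy on $\{\int F(u)\,dz=1\}$, a Lagrange multiplier, and a Grushin dilation to absorb it), whereas the paper never touches the constrained problem. It works from the start in the doubly symmetric space $\mathcal{D}_{\gamma,rad_{x,y}}^{1,2}(\mathbb{R}^{N})$, runs a Jeanjean-type monotonicity/auxiliary-functional argument $\tilde{J}(s,u)=J(u(e^{-s}x,e^{-(\gamma+1)s}y))$ to produce a bounded Palais--Smale sequence carrying Pohozaev-type information (Proposition \ref{P1D}, Lemma \ref{limitacaoD}), excludes vanishing via the level-set compactness Lemma \ref{embedding-compact01}, and finishes with the Principle of Symmetric Criticality. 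Your scaling computations for the Grushin dilation and the global bound $|F(s)|\leq C|s|^{2^{*}_{\gamma}}$ from $(f_4)$--$(f_5)$ are correct, and the nonemptiness of the constraint set from $(f_6)$ is fine.

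The genuine gap is exactly where you flag ``the principal obstacle'': the compactness step is not an argument but a promissory note, and as written it would not go through. First, you never commit to minimizing over a symmetry class. In the full space $\mathcal{D}_{\gamma}^{1,2}(\mathbb{R}^{N})$ the problem is invariant under translations in $y$ and under $O(m)\times O(k)$, so a minimizing sequence can simply escape to infinity in $y$ and the weak limit can vanish; no ``Brezis--Lieb splitting plus concentration--compactness with dilations'' fixes that without first restricting to $\mathcal{D}_{\gamma,rad_{x,y}}^{1,2}(\mathbb{R}^{N})$ (and once you do restrict, the Euler--Lagrange equation only holds against symmetric test functions, so you also need the Principle of Symmetric Criticality, which you never invoke). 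Second, the tool that actually replaces the Strauss radial lemma here is Lemma \ref{embedding-compact01}: for a bounded sequence in $\mathcal{D}_{\gamma,rad_{x,y}}^{1,2}$ with weak limit $0$ one has $\int_{\{a<|u_n|<b\}}|u_n|^{p}\,dz\to 0$, which combined with $(f_4)$ (supercritical control near $0$) and $(f_5)$ (subcritical control at infinity) gives $\int F(u_n)\,dz\to\int F(u_\ast)\,dz$; you cite ``compact Grushin--Sobolev embeddings on bounded sets in the range $0\leq\gamma<1$'', but those hold for all $\gamma\geq 0$ and are not the relevant mechanism, nor is Schwarz rearrangement or ``concentration along $\{x=0\}$'' the real enemy. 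Finally, the sign of the Lagrange multiplier $\theta>0$ is asserted without proof (it requires ruling out $\theta\leq 0$, e.g.\ via a Pohozaev-type identity for $\Delta_{\gamma}$, which is itself nontrivial in this degenerate setting), and ``standard elliptic regularity and a Grushin maximum principle'' are not available off the shelf for this degenerate operator. Until the symmetric framework, the weak continuity of $u\mapsto\int F(u)\,dz$ on that subspace, and the multiplier sign are supplied, the proof is incomplete at its decisive step.
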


Related to a more general class of degenerate operators, in the year $2017$, Chen, Tang and Gao studied in \cite{ChenTang&Gal2017SSMH} (see also \cite{Anh&My2016CCVE}, \cite{Kogoj&Lanconelli2012NA}, \cite{Luyen&Tri2019CVEE} and \cite{Rahal&Hamdani2018JFT}) the following problem
\begin{equation}
	\left \{
	\begin{array}{rclcl}
		-\Delta_{\lambda} u + a(z)u & = & f(x,u) & \mbox{in} & \Omega, \\
		u & = & 0 & \mbox{on} & \partial\Omega,
	\end{array}
	\right.
\end{equation}
where $\Omega$ is a bounded domain in $\mathbb{R}^{N}$, $\Delta_{\lambda}$-Laplace is given by \eqref{general_operator}, $a(z)$ is allowing to be sign-changing and $f$ is a function with a more general super-quadratic growth. 

If we  suppose that $f$ satisfies the well-known Ambrosetti-Rabinowitz superlinear condition, that is, there exists exists $\theta> 2$ such that
$$
0<\theta F(t)\leq f(t)t, ~~ t\in\mathbb{R}\backslash\{0\}, \leqno{(f_7)}
$$
we are able to establish the existence of nontrivial solution for the problem $(P)_a$ for all $\gamma\geq 0$ by supposing that %
\begin{equation}\label{A1}\tag{A1}
a(z)\geq a_0>0, ~ \hbox{for all} ~ z\in\mathbb{R}^N
\end{equation}
and 
\begin{equation}\label{A2}\tag{A2}
	a(x,y)=a(x',y) ~ \hbox{for all} ~ x, x'\in\mathbb{R}^{m} ~ \hbox{with}  ~ |x|=|x'| ~ \hbox{and all} ~ y\in\mathbb{R}^{k}.
\end{equation}

\noindent In addition to the above conditions, we will consider one of the following conditions: \\

\noindent The function $a$ is periodic on variable $y$, that is, 
\begin{equation}\label{A3}\tag{A3}
a(x,y)=a(x,y+y_0)~, ~ \hbox{for all} ~ x\in\mathbb{R}^m, y\in\mathbb{R}^k ~ \hbox{and} ~  y_0\in\mathbb{Z}^k,
\end{equation}
or the function $a$ is coercive on variable $y$, that is, 
\begin{equation}\label{A4}\tag{A4}
a(x,y)\longrightarrow +\infty ~ \hbox{when} ~ |y|\longrightarrow\infty, ~ \hbox{uniformly for} ~ x\in\mathbb{R}^m.
\end{equation}

Our main result involving the above conditions is the following: 

\begin{theorem}\label{Theorem2} ({\bf Nonconstant case}) Assume $\gamma \geq 0 $, $(f_1),(f_2)$ and $(f_7)$ and let $a:\mathbb{R}^{N}\longrightarrow\mathbb{R}$ be a continuous function satisfying \eqref{A1}  and \eqref{A2}. If \eqref{A3} or \eqref{A4} holds, then problem $(P)_a$ has at least one nonnegative nontrivial weak solution.
\end{theorem}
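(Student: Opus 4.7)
My plan is to apply the Mountain Pass Theorem in a weighted Grushin--Sobolev space, use \eqref{A2} to pass via Palais's principle of symmetric criticality to the subspace of functions cylindrically symmetric in $x$, and then exploit either \eqref{A3} or \eqref{A4} to recover the compactness that is otherwise lost on $\mathbb{R}^{N}$. Define
$$
E \;=\; \Bigl\{\, u \;:\; \nabla_x u,\; |x|^{\gamma}\nabla_y u \in L^{2}(\mathbb{R}^{N}),\; \int_{\mathbb{R}^N} a(z)\, u^{2}\,dz < \infty \,\Bigr\}
$$
with Hilbert norm $\|u\|^{2} = \int (|\nabla_x u|^{2} + |x|^{2\gamma}|\nabla_y u|^{2} + a(z)\,u^{2})\,dz$ (positive definite by \eqref{A1}), and set $E_r = \{u \in E : u(x,y) = u(|x|,y)\}$. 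To obtain a \emph{nonnegative} solution I would replace $f$ by the truncation $\tilde f(s) = f(s)$ for $s \geq 0$, $\tilde f(s) = 0$ for $s < 0$; then any weak solution $u$ of the truncated equation satisfies $u \geq 0$ by testing with $u^{-}$ and using \eqref{A1}.

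Next, verify mountain pass geometry for $I(u) = \tfrac{1}{2}\|u\|^{2} - \int \tilde F(u)\,dz$. Conditions $(f_1), (f_2)$ give $|\tilde F(s)| \leq \varepsilon s^{2} + C_\varepsilon |s|^{q}$, so combined with the Grushin--Sobolev embedding $E_r \hookrightarrow L^{p}(\mathbb{R}^{N})$ for $p \in [2, 2^{*}_{\gamma}]$ one obtains $I(u) \geq \alpha > 0$ on some sphere $\|u\| = \rho$. The Ambrosetti--Rabinowitz condition $(f_7)$ yields $F(s) \geq c_1 s^{\theta} - c_2$ for $s > 0$, so $I(t\varphi) \to -\infty$ for any fixed $\varphi \in E_r$ with $\varphi^{+} \not\equiv 0$. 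The Mountain Pass Theorem then produces a Palais--Smale sequence $(u_n) \subset E_r$ at a level $c > 0$, and the standard estimate $\theta I(u_n) - I'(u_n)u_n \geq (\theta/2 - 1)\|u_n\|^{2}$ coming from $(f_7)$ ensures that $(u_n)$ is bounded in $E_r$.

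The decisive step is compactness. Under \eqref{A4}, I would invoke the compact embedding $E_r \hookrightarrow L^{p}(\mathbb{R}^{N})$ for $p \in [2, 2^{*}_{\gamma})$: on any ball a local Grushin--Rellich theorem handles the compactness, the coercivity of $a$ in $y$ gives the tail estimate $\int_{|y|>R} u_n^{2}\,dz \leq \bigl(\inf_{|y|>R} a\bigr)^{-1} \int a\,u_n^{2}\,dz$, and an interpolation with the fixed $L^{2^{*}_{\gamma}}$ bound together with the $x$-radial symmetry controls the tail in $x$. Strong $L^{q}$ convergence then allows passage to the limit in $I'(u_n) = o(1)$, yielding a nontrivial weak solution. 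Under \eqref{A3}, there is no compact embedding, and instead I would run a concentration--compactness argument along the $y$-direction, using that both $-\Delta_\gamma$ and $a(\cdot)$ are $\mathbb{Z}^{k}$-translation invariant in $y$. A Lions-type vanishing lemma adapted to the Grushin setting shows that if $\sup_{y_0 \in \mathbb{R}^k} \int_{B_R((0,y_0))} u_n^{2}\,dz \to 0$ for some $R$, then $u_n \to 0$ in $L^{q}$, contradicting $c > 0$. Thus non-vanishing occurs, and translating by suitable $y_n \in \mathbb{Z}^{k}$ one extracts a weakly convergent subsequence $u_n(\cdot, \cdot + y_n)$ with a nontrivial limit that weakly solves $(P)_a$.

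The main obstacle is the compactness step in case \eqref{A3}: one must prove the Lions vanishing lemma within the Grushin--Sobolev framework restricted to functions cylindrically symmetric in $x$, and verify that $\mathbb{Z}^k$-translations in $y$ stay in $E_r$ (they do, since the translations act on the $y$-variable only). The compactness results announced in the abstract should supply the required machinery; case \eqref{A4} is comparatively routine once the compact embedding is established.
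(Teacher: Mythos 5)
Your proposal follows essentially the same route as the paper: restriction to the $x$-cylindrically symmetric subspace plus Palais's symmetric criticality (via \eqref{A2}), mountain pass with $(f_7)$ giving boundedness, a compact embedding of $\mathcal{H}^{a}_{\gamma,rad_x}$ into $L^{p}$ under \eqref{A4}, and a Lions-type vanishing lemma followed by $\mathbb{Z}^{k}$-translations in $y$ under \eqref{A3}. The only point worth adjusting is that the vanishing quantity in the Grushin setting must be taken over the slabs $\mathbb{R}^{m}\times B_{r}^{k}(y)$ (unbounded in $x$) rather than over balls $B_{R}((0,y_0))\subset\mathbb{R}^{N}$, since only $y$-translations are available and the $x$-direction is then handled by the radial symmetry on the slab.
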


The Theorem \ref{Theorem2} complements the study made in Coti Zelati and Rabinowitz \cite{CotiZelati&Rabinowitz1992CPAM}, Kryszewski and Szulkin \cite{Kryszewski&Szulkin1996RRMSU} and Rabinowitz \cite{Rabinowitz1992ZAMP}, because in those papers it was considered the case where $\gamma=0$ and $a$ being a periodic function. 

Our last result is associated with the behavior of the solutions at infinite. 

\begin{theorem}\label{Theorem3} Let $u$ be a solution of problem $(P)_a$ with $a \not=0$. Then $u\in L^{\infty}(\mathbb{R}^N)$ and $\displaystyle \lim_{|z|\to\infty}u(z)=0.$  
\end{theorem}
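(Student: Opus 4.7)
The proof plan is to establish first that $u$ is bounded on $\mathbb{R}^N$ and then to upgrade this to decay at infinity by means of a uniform local estimate combined with the global integrability of $u$. Throughout, $a\not\equiv 0$ is read as $a\ge a_0>0$ (trivially for $a=1$, from \eqref{A1} in the nonconstant case), so the mass term is coercive.

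\textbf{Step 1: $u\in L^\infty(\mathbb{R}^N)$.} Any weak solution belongs to the Grushin--Sobolev space $H^1_\gamma(\mathbb{R}^N)$, hence $u\in L^{2^*_\gamma}(\mathbb{R}^N)$. From $(f_1), (f_2)$ (or the analogous pair in the theorem being applied) one has
\begin{equation*}
|f(s)| \le \varepsilon|s| + C_\varepsilon |s|^{q-1}, \qquad q\in(2,2^*_\gamma).
\end{equation*}
I would run a Moser iteration: test $(P)_a$ against $\varphi_L = u\min(|u|,L)^{2\beta}$, discard the nonnegative term $\int a(z)\,u\,\varphi_L$, apply the Grushin--Sobolev inequality to $v=|u|\min(|u|,L)^\beta$, iterate along the geometric sequence $\beta_{n+1} = (2^*_\gamma/2)\beta_n$, and let $L\to\infty$. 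The outcome is the global bound
\begin{equation*}
\|u\|_{L^\infty(\mathbb{R}^N)} \le C\, \|u\|_{L^{2^*_\gamma}(\mathbb{R}^N)}.
\end{equation*}

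\textbf{Step 2: vanishing at infinity.} With $u\in L^\infty\cap L^{2^*_\gamma}$ in hand, I would prove the $z_0$-uniform local estimate
\begin{equation*}
\|u\|_{L^\infty(B_{1/2}(z_0))} \le C\,\|u\|_{L^{2^*_\gamma}(B_1(z_0))}, \qquad z_0\in\mathbb{R}^N.
\end{equation*}
When $|x_0|\ge 2$, the weight $|x|^{2\gamma}$ is bounded below on $B_1(z_0)$, so $-\Delta_\gamma$ is uniformly elliptic with smooth coefficients there and classical interior Moser estimates supply the bound with a constant depending only on this lower bound. When $|x_0|\le 2$, I would exploit the invariance of $\Delta_\gamma$ under $y$-translations: the function $\tilde u(x,y)=u(x,y+y_0)$ solves an equation of the same form on $B_1((x_0,0))$, and the localized version of the Moser iteration from Step 1 can be run on balls centered in the compact slab $\{|x|\le 3\}\times\{0\}$, where compactness in the $x$-slice yields a uniform constant. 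Since $u\in L^{2^*_\gamma}(\mathbb{R}^N)$, absolute continuity of the integral gives $\|u\|_{L^{2^*_\gamma}(B_1(z_0))}\to 0$ as $|z_0|\to\infty$, and the local estimate then forces $u(z)\to 0$.

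\textbf{Main obstacle.} The delicate step is making the local $L^\infty$--$L^{2^*_\gamma}$ estimate in Step 2 uniform in $z_0$ near the degenerate manifold $\{x=0\}$, where $-\Delta_\gamma$ loses uniform ellipticity and where the usual Euclidean translation-dilation scaling fails. The remedy is twofold: (i) use the Grushin--Sobolev inequality, not the Euclidean one, as the embedding step inside the localized Moser iteration; and (ii) replace the missing $x$-translation invariance by combining the genuine $y$-translation invariance of $\Delta_\gamma$ with compactness of the $x$-slice $\{|x|\le 3\}$, so that the iteration constant can be taken independent of $z_0$.
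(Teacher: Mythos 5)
Your overall method (a De Giorgi--Moser iteration driven by the Grushin--Sobolev inequality \eqref{Sobolev-inequality}, with truncations $u_L$) is the same as the paper's; only the organization differs. The paper first proves the decay by iterating on the exterior sets $\{|z|\ge R\}$ with a single cutoff in $|z|$, and then obtains $u\in L^{\infty}$ from the analogous estimate on balls (deferring that part to Alves--Germano); your Step~1 instead runs the iteration globally, Brezis--Kato style, which is a perfectly good and arguably more self-contained route to $\|u\|_{L^\infty}\le C\|u\|_{L^{2^*_\gamma}}$, provided you say how the first rung of the ladder is launched (either split $u^{q-2}$ into its parts on $\{u\le K\}$ and $\{u>K\}$ so the large part has small $L^{2^*_\gamma/(q-2)}$ norm and can be absorbed, or, as the paper does in \eqref{By-Young-Next}, exploit that the H\"older exponent $\alpha^*$ satisfies $\alpha^*<2^*_\gamma$ so no absorption is needed).

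The genuine gap is in Step~2, precisely at the point you flag as delicate, but in the regime you dismiss rather than the one you treat. For $|x_0|\ge 2$ you claim that classical interior Moser estimates on $B_1(z_0)$ give a constant ``depending only on the lower bound'' of $|x|^{2\gamma}$. This is false: the De Giorgi--Nash--Moser constant depends on the ellipticity \emph{ratio}, and on $B_1(z_0)$ the coefficient $|x|^{2\gamma}$ is of order $|x_0|^{2\gamma}$, which is unbounded as $|x_0|\to\infty$. So your local estimate is not uniform exactly where it is needed for $u(z)\to 0$ along sequences with $|x_0|\to\infty$. The difficulty is real because any cutoff $\eta$ localizing in $y$ contributes $|x|^{2\gamma}|\nabla_y\eta|^2$ to $|\nabla_\gamma\eta|^2$. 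It can be repaired, but not for free: either rescale anisotropically ($\tilde y=|x_0|^{-\gamma}y$), which forces you to work on cylinders $B_1^m(x_0)\times B^k_{|x_0|^{\gamma}}(y_0)$ whose $y$-extent grows, or flatten the cutoff in $y$ accordingly. The paper's device is exactly this: it takes $|\nabla\eta|\le \frac{2}{r\sqrt{R_\gamma}}$ with $R_\gamma=\max\{1,R^{2\gamma}\}$, so that $|\nabla_\gamma\eta|$ stays uniformly bounded on $\{|z|\le R\}$, and then the whole exterior region $\{|z|\ge R\}$ is handled in one stroke by the global embedding \eqref{embedding-continuous}, with no case distinction in $x_0$ at all. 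Your treatment of the other regime ($|x_0|\le 2$, via $y$-translation invariance, the uniform embedding \eqref{embedding-continuous1} on $\mathbb{R}^m\times B_r(y)$, and boundedness of $|x|^{2\gamma}$ on the slab) is sound.
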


In what follows, we say that a function $u\in\mathcal{H}_{\gamma}^{a}(\mathbb{R}^{N})$ is a {\it weak solution} to problem $(P)_a$, with $a\neq 0$, if $u\geq 0$ a.e. in $\mathbb{R}^{N}$, $ u \not \equiv 0$  and
\begin{equation}\label{solution_definition}
\int_{\mathbb{R}^{N}}\big(\nabla_{\gamma} u\nabla_{\gamma}\varphi +a(z)u\varphi \big)\,dz =\int_{\mathbb{R}^{N}} f(u)\varphi \,dz, \quad \forall \varphi\in\mathcal{H}_{\gamma}^{a}(\mathbb{R}^{N}).
\end{equation}
Similarly, $u\in\mathcal{D}_{\gamma}^{1,2}\big(\mathbb{R}^{N}\big)$ is a {\it weak solution} to problem $(P)_0$ if \eqref{solution_definition} hold for all $\varphi\in\mathcal{D}_{\gamma}^{1,2}\big(\mathbb{R}^{N}\big)$. The spaces $\mathcal{H}_{\gamma}^{1,2}(\mathbb{R}^{N})$, $\mathcal{H}_{\gamma}^{a}(\mathbb{R}^{N})$,  $\mathcal{D}_{\gamma}^{1,2}\big(\mathbb{R}^{N}\big)$ and the notation $\nabla_{\gamma}$ are defined in Section 2.

Before concluding this section, we would like to point out that in the proof of Theorems \ref{Theorem1}, \ref{Theorem1'} and \ref{Theorem2} we have found some difficulties to apply variational methods  when $\gamma>0$, because different from the case $\gamma=0$, the energy functional  associated with problem $(P)_a$ is not invariant on  the group $O(N)$ and it is also not invariant by  for some types of translations. These facts bring to the problem some technical difficulties that must be analyzed in a very cautious way. Have this in mind, in Section 2, we made a careful study about  the space functions that will be used in this paper, which includes compactness results and a version of a result due to Lions \cite[Lemma I.1]{Lions}  for the space $\mathcal{H}_{\gamma}^{1,2}(\mathbb{R}^{N})$.

Throughout the paper, unless explicitly stated, the symbol ${C}$ will always denote a generic positive constant, which may vary from line to line. The usual norm in $L^{p}(\mathbb{R}^{N})$ will be denoted by $|\cdot|_{p}$ and $H^{1}(\Omega)$ will be used to denote standard Sobolev spaces in $\Omega \subset \mathbb{R}^N$.


\section{Function spaces}

In this section we will introduce some notations and preliminary results that will be used later on.
First of all, let $\gamma$ be a nonnegative real number and $z=(x,y)=(x_1,\ldots,x_m,y_1,\ldots,y_k)\in\mathbb{R}^m\times\mathbb{R}^k$ with $m, k\geq1$ and $m+k=N\geq 3$. In what follows, we designate by $\mathcal{H}_{\gamma}^{1,2}\big(\mathbb{R}^{N}\big)$ the weighted Sobolev space 
\begin{equation*}
\mathcal{H}_{\gamma}^{1,2}\big(\mathbb{R}^{N}\big)=\left\{u\in L^{2}\big(\mathbb{R}^{N}\big):\frac{\partial u}{\partial x_i}, ~|x|^{\gamma}\frac{\partial u}{\partial y_j}\in L^{2}\big(\mathbb{R}^{N}\big), i=1,\ldots,m, j=1,\ldots,k.\right\}.
\end{equation*}

\noindent For simplicity, if  $u\in \mathcal{H}_{\gamma}^{1,2}(\mathbb{R}^{N})$, we denote by $\nabla_{\gamma}$  the gradient operator defined by
\begin{equation*}
\nabla_{\gamma}u=\left(\nabla_{x}u, |x|^{\gamma}\nabla_{y} u \right)= \left(u_{x_1},\cdots,u_{x_m},  |x|^{\gamma}u_{y_1},\cdots,|x|^{\gamma}u_{y_k} \right),
\end{equation*}
and so,
\begin{equation*}
|\nabla_{\gamma}u|^{2}=|\nabla_{x}u|^2+|x|^{2\gamma}|\nabla_{y}u|^2.
\end{equation*}
We would like to mention  that $\mathcal{H}_{\gamma}^{1,2}\big(\mathbb{R}^{N}\big)$ is a Hilbert space, when endowed with the scalar product given by
\begin{equation*}
\langle u,v \rangle_{\mathcal{H}_{\gamma}^{1,2}(\mathbb{R}^{N})} = \int_{\mathbb{R}^{N}}\big(\nabla_{\gamma} u\nabla_{\gamma} v + uv \big) \,dz,
\end{equation*}
whose the corresponding norm is 
\begin{equation*}
||u||_{\gamma}:=\sqrt{\langle u,u \rangle_{\mathcal{H}_{\gamma}^{1,2}(\mathbb{R}^{N})}}=\left(\int_{\mathbb{R}^{N}}\big(|\nabla_{\gamma}u|^2+|u|^2\big) \, dz\right)^{\frac{1}{2}}.
\end{equation*}
By Sobolev inequality found in Monti \cite{Monti2006CPDE}, we have
\begin{equation}\label{Sobolev-inequality}
\left(\int_{\mathbb{R}^{N}}|u|^{2_{\gamma}^{*}} \, dz\right)^{2/2_{\gamma}^{*}}\leq C(m,k,\gamma)\int_{\mathbb{R}^{N}}|\nabla_{\gamma}u|^2 \, dz,
\end{equation}
from where it follows that the embedding $\mathcal{H}_{\gamma}^{1,2}\big(\mathbb{R}^{N}\big) \hookrightarrow L^{2_{\gamma}^{*}}\big(\mathbb{R}^{N}\big)$ is continuous, then there is $\mathcal{C}>0$ such that
\begin{equation*}
|u|_{2_{\gamma}^{*}}\leq \mathcal{C}||u||_{\gamma}, \quad \forall u \in \mathcal{H}_{\gamma}^{1,2}\big(\mathbb{R}^{N}\big).
\end{equation*}
Since 
\begin{equation*}
|u|_{2}\leq ||u||_{\gamma}, \quad \forall u \in \mathcal{H}_{\gamma}^{1,2}\big(\mathbb{R}^{N}\big),
\end{equation*}
the interpolation inequality on the Lebesgue space ensures that the embedding \linebreak $\mathcal{H}_{\gamma}^{1,2}\big(\mathbb{R}^{N}\big) \hookrightarrow L^{p}\big(\mathbb{R}^{N}\big)$ is continuous for every $p\in [2, 2^{*}_{\gamma}]$. Thus, for each $p \in [2, 2^{*}_{\gamma}]$, there is $\mathcal{C}_p>0$ such that
\begin{equation}\label{embedding-continuous}
|u|_{p}\leq \mathcal{C}_p||u||_{\gamma},  \quad \forall u \in \mathcal{H}_{\gamma}^{1,2}\big(\mathbb{R}^{N}\big).
\end{equation}

Here, we would like point out that if $\Omega_1 \subset \mathbb{R}^m$ and $\Omega_2 \subset \mathbb{R}^k$ are smooth bounded domains, the embeddings (\ref{embedding-continuous}) still hold by replacing  $\mathbb{R}^N$ for any set of the form $\Omega=\mathbb{R}^m \times \Omega_2 $, $\Omega=\Omega_1 \times \mathbb{R}^k $ or $\Omega=\Omega_1 \times \Omega_2 $, that is,  
\begin{equation}\label{embedding-continuous1}
|u|_{L^{p}(\Omega)}\leq \mathcal{C}_p||u||_{\mathcal{H}_{\gamma}^{1,2}(\Omega)},  \quad \forall u \in \mathcal{H}_{\gamma}^{1,2}(\Omega),
\end{equation}
where 
\begin{equation*}
\mathcal{H}_{\gamma}^{1,2}\big(\Omega\big)=\left\{u\in L^{2}\big(\Omega\big):\frac{\partial u}{\partial x_i}, ~|x|^{\gamma}\frac{\partial u}{\partial y_j}\in L^{2}\big(\Omega\big), i=1,\ldots,m, j=1,\ldots,k.\right\}
\end{equation*} 
and 
\begin{equation*}
||u||_{\mathcal{H}_{\gamma}^{1,2}(\Omega)}:=\left(\int_{\Omega}\big(|\nabla_{\gamma}u|^2+|u|^2\big) \, dz\right)^{\frac{1}{2}}.
\end{equation*}

The proof of \eqref{embedding-continuous1} follows by showing the existence of an Extension Operator, which can be done as in Br\'ezis \cite[Section 9.2]{Brezis}. Moreover, when $\Omega=\Omega_1 \times \Omega_2 \subset \mathbb{R}^N=\mathbb{R}^m \times \mathbb{R}^k $, the embedding $\mathcal{H}_{\gamma}^{1,2}(\Omega) \hookrightarrow L^{p}(\Omega)$ is compact for every $p\in [1, 2^{*}_{\gamma})$, which is a consequence of the Strauss's Compactness Lemma \cite[Theorem A.I]{berest}.

Setting the space
\begin{equation*}
\mathcal{H}_{\gamma}^{a}(\mathbb{R}^N)=\left\{u\in\mathcal{H}_{\gamma}^{1,2}\big(\mathbb{R}^{N}\big):\int_{\mathbb{R}^{N}}a(z)|u|^2 \, dz <\infty\right\},
\end{equation*}
it is easy to see that condition \eqref{A1} implies that $\mathcal{H}_{\gamma}^{a}(\mathbb{R}^N)$ is Hilbert space with scalar product and norm given by
\begin{equation*}
\langle u,v \rangle_{a} = \int_{\mathbb{R}^{N}}\big(\nabla_{\gamma} u\nabla_{\gamma} v + a(z)uv \big) \,dz ~~ \hbox{and} ~~ ||u||^{2}:=\sqrt{\langle u,u \rangle_{a}}.
\end{equation*}
Moreover,  \eqref{A1} also yields $\mathcal{H}_{\gamma}^{a}(\mathbb{R}^N)$ is embedded continuously into $\mathcal{H}_{\gamma}^{1,2}(\mathbb{R}^{N})$. Hence, by (\ref{embedding-continuous}),   $\mathcal{H}_{\gamma}^{a}(\mathbb{R}^N)$ is embedded continuously in $L^{p}(\mathbb{R}^{N})$ for every $p\in [2, 2^{*}_{\gamma}]$.\\

Another important space that  will consider in the present paper is $\mathcal{D}_{\gamma}^{1,2}(\mathbb{R}^{N}) $, which is the closure of $C_{0}^{\infty}(\mathbb{R}^N)$ with relation to the norm
$$
\|\varphi \|_{\mathcal{D}_{\gamma}^{1,2}(\mathbb{R}^{N})}=\left(\int_{\mathbb{R}^{N}}|\nabla_{\gamma}u|^2 \, dz\right)^{\frac{1}{2}}, \quad \varphi \in C_{0}^{\infty}(\mathbb{R}^N).
$$
From (\ref{Sobolev-inequality}), we derive that the embedding $\mathcal{D}_{\gamma}^{1,2}(\mathbb{R}^{N}) \hookrightarrow L^{2^*_{\gamma}}(\mathbb{R}^N)$ is continuous, that is, there is $\mathcal{S}>0$ such that 
\begin{equation} \label{S}
|u|_{2^*_{\gamma}}\leq \mathcal{S}\|u\|_{\mathcal{D}_{\gamma}^{1,2}(\mathbb{R}^{N})},  \quad \forall u \in \mathcal{D}_{\gamma}^{1,2}(\mathbb{R}^{N}).
\end{equation}
\subsection{Compactness Results}

In this section we will prove some important compactness results that will be used in the proof of Theorems \ref{Theorem1}, \ref{Theorem1'} and \ref{Theorem2}. 

Our first result is a version of a famous result due to Lions \cite[Lemma I.1]{Lions} for the space  $\mathcal{H}_{\gamma}^{1,2}(\mathbb{R}^{N})$.

\begin{lemma}\label{Lions-lemma} 
	Let $r>0, 2\leq q < 2^{*}_{\gamma}$ and $(u_n) \subset \mathcal{H}_{\gamma}^{1,2}(\mathbb{R}^{N})$ be a bounded sequence with
	\begin{equation*}
		\sup_{y\in\mathbb{R}^{k}}\int_{\mathbb{R}^{m}\times B^{k}_r(y)}|u_n|^{q}dz \longrightarrow 0 \quad \mbox{as} \quad n\longrightarrow\infty,
	\end{equation*}	
where $ B_r^{k}(y)=\{\xi \in \mathbb{R}^k\,:\,|\xi - y|<r\}.$ Then, $u_n\longrightarrow 0$ in $L^{p}(\mathbb{R}^{N})$ for $2<p<2^{*}_{\gamma}$.	
\end{lemma}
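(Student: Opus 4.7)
The plan is to mimic Lions' classical concentration-compactness lemma, being careful that the weight $|x|^{2\gamma}$ makes $\|\cdot\|_\gamma$ invariant under translations in the $y$-variable only. Consequently, both the hypothesis and the covering must work with slabs $\mathbb{R}^m\times B^k_r(y)$ on which the slab Sobolev estimate \eqref{embedding-continuous1} is available. Concretely, I would pick centers $\{y_i\}_{i\in\mathbb{N}}\subset\mathbb{R}^k$ so that $\{B^k_r(y_i)\}_i$ covers $\mathbb{R}^k$ with finite multiplicity $M=M(k,r)$, set $\Omega_i:=\mathbb{R}^m\times B^k_r(y_i)$, and use the embedding $|u|_{L^{2^{*}_{\gamma}}(\Omega_i)}\le C\,\|u\|_{\mathcal{H}^{1,2}_\gamma(\Omega_i)}$ with $C$ independent of $i$ (by $y$-translation invariance).

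The key step is to introduce the distinguished exponent
$$p_0:=2+\frac{2q}{N_\gamma},$$
which lies in $(q,2^{*}_{\gamma})$ and is designed so that, with $\theta:=1-2/p_0$, the interpolation identity $\frac{1}{p_0}=\frac{\theta}{q}+\frac{1-\theta}{2^{*}_{\gamma}}$ holds. Three-point H\"older interpolation on $\Omega_i$ followed by the slab Sobolev estimate yields
$$\int_{\Omega_i}|u|^{p_0}\,dz\le\Big(\int_{\Omega_i}|u|^q\,dz\Big)^{(p_0-2)/q}\Big(\int_{\Omega_i}|u|^{2^{*}_{\gamma}}\,dz\Big)^{2/2^{*}_{\gamma}}\le C\,\Big(\int_{\Omega_i}|u|^q\,dz\Big)^{(p_0-2)/q}\|u\|_{\mathcal{H}^{1,2}_\gamma(\Omega_i)}^{2}.$$
Summing over $i$, pulling out the supremum, and combining with the finite-overlap bound $\sum_i\|u_n\|_{\mathcal{H}^{1,2}_\gamma(\Omega_i)}^{2}\le M\|u_n\|_\gamma^{2}$ together with the boundedness of $(u_n)$ in $\mathcal{H}^{1,2}_\gamma(\mathbb{R}^N)$ gives
$$\int_{\mathbb{R}^N}|u_n|^{p_0}\,dz\le CM\Big(\sup_{y\in\mathbb{R}^k}\int_{\mathbb{R}^m\times B^k_r(y)}|u_n|^q\,dz\Big)^{(p_0-2)/q}\|u_n\|_\gamma^{2}\longrightarrow 0,$$
so $u_n\to 0$ in $L^{p_0}(\mathbb{R}^N)$.

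To reach every $p\in(2,2^{*}_{\gamma})$, I would invoke that $(u_n)$ is bounded in $L^2(\mathbb{R}^N)$ and in $L^{2^{*}_{\gamma}}(\mathbb{R}^N)$ (via the embeddings of $\mathcal{H}^{1,2}_\gamma(\mathbb{R}^N)$) and apply the interpolation inequality $|u_n|_p\le|u_n|_{p_1}^{\lambda}|u_n|_{p_2}^{1-\lambda}$ with $(p_1,p_2)=(2,p_0)$ for $p\in(2,p_0)$ and $(p_1,p_2)=(p_0,2^{*}_{\gamma})$ for $p\in(p_0,2^{*}_{\gamma})$; in both cases the $L^{p_0}$-factor tends to $0$ while the other factor stays bounded, delivering $|u_n|_p\to 0$. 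The main obstacle is the isolation of $p_0$: it is precisely the value for which the Sobolev factor enters with exponent $2$ on $\|u\|_{\mathcal{H}^{1,2}_\gamma(\Omega_i)}$, the only power that can be summed against a cover by slabs of infinite $x$-measure; any mismatched interpolation would require summing $\|u\|_{\mathcal{H}^{1,2}_\gamma(\Omega_i)}^{\beta}$ with $\beta\neq 2$, which cannot be controlled by $\|u\|_\gamma$ on the infinite family $\{\Omega_i\}$.
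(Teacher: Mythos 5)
Your proposal is correct and matches the paper's proof essentially step by step: cover $\mathbb{R}^k$ by balls with finite multiplicity, interpolate on each slab $\mathbb{R}^m\times B_r^k(y_i)$ between $L^q$ and $L^{2^*_\gamma}$, calibrate the interpolation exponent so that the slab Sobolev norm enters squared (your $p_0=2+\frac{2q}{N_\gamma}$ is exactly the paper's implicit choice $s$ coming from $\alpha=2/s$), sum over the cover, and finish with interpolation against $L^2$ and $L^{2^*_\gamma}$. The only difference is cosmetic: the paper keeps a free exponent $s$ and then solves $\alpha=2/s$, whereas you solve for the distinguished exponent $p_0$ up front.
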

\begin{proof}
	First all, let us consider
	$$
	\mathbb{R}^{N}=\mathbb{R}^{m}\times\mathbb{R}^{k}\subset\bigcup_{y\in\mathbb{R}^{k}}\left(\mathbb{R}^{m}\times B_r(y)\right),
	$$
	in such way that each point of $\mathbb{R}^k$ is contained in a finite quantity $l$ of balls of radius $r$. Now, let $q<s<2^{*}_{\gamma}$ and $\alpha:=\frac{(s-q)2^{*}_{\gamma}}{(2^{*}_{\gamma}-q)s}$. Then, if $u\in \mathcal{H}_{\gamma}^{1,2}(\mathbb{R}^{N})$, the embedding  \eqref{embedding-continuous1} together with the interpolation inequality on the Lebesgue space says that 
	\begin{eqnarray*}
		|u|_{L^{s}(\mathbb{R}^{m}\times B_r(y))} & \leq & |u|^{1-\alpha}_{L^{q}(\mathbb{R}^{m}\times B_r(y))}|u|^{\alpha}_{L^{2^{*}_{\gamma}}(\mathbb{R}^{m}\times B_r(y))} \\
		&\leq&\mathcal{C}|u|^{1-\alpha}_{L^{q}(\mathbb{R}^{m}\times B_r(y))}\left(\int_{\mathbb{R}^{m}\times B_r(y)}\big(|\nabla_{\gamma}u|^2+|u|^{2}\big) \, dz\right)^{\frac{\alpha}{2}}. \\
	\end{eqnarray*}
	Choosing $\alpha=2/s$, we obtain the inequality
	\begin{equation*}
		\int_{\mathbb{R}^{m}\times B_r(y)}|u|^{s} \, dz\leq \mathcal{C}^s|u|^{(1-\alpha)s}_{L^{q}(\mathbb{R}^{m}\times B_r(y))}\int_{\mathbb{R}^{m}\times B_r(y)}\big(|\nabla_{\gamma}u|^2+|u|^{2}\big) \, dz,
	\end{equation*}
	which leads to 
	\begin{equation*}
		\int_{\mathbb{R}^{N}}|u|^{s} \, dz \leq \mathcal{C}^sl\sup_{y\in\mathbb{R}^k}\big(\int_{\mathbb{R}^{m}\times B_r(y)}|u|^q \, dz\big)^{(1-\alpha)s/q}\int_{\mathbb{R}^{N}}\big(|\nabla_{\gamma}u|^2+|u|^{2}\big) \, dz.
	\end{equation*}
	As $(u_n)$ is bounded sequence in $\mathcal{H}_{\gamma}^{1,2}(\mathbb{R}^{N})$, the last inequality allows us to conclude that $u_n\to 0$ in $L^{s}(\mathbb{R}^{N})$. Now, applying again the interpolation inequality on the Lebesgue spaces, we conclude that $u_n \to 0$ in $L^{p}(\mathbb{R}^{N})$ for $2<p<2^{*}_{\gamma}$.
\end{proof}

Next, we will show some compactness results that are crucial in our approach. Have this in mind, we will fix the following spaces:
$$
\mathcal{H}_{\gamma,rad_{x,y}}^{1,2}\big(\mathbb{R}^{N}\big)=\left\{\mathcal{H}_{\gamma}^{1,2}\big(\mathbb{R}^{N}\big):u(x,y)=u(|x|,|y|) ~ \hbox{for all} ~ (x,y) \in \mathbb{R}^{m} \times \mathbb{R}^{k}\right\},
$$
$$
 \mathcal{H}_{\gamma,rad_x}^{1,2}\big(\mathbb{R}^{N}\big)=\left\{\mathcal{H}_{\gamma}^{1,2}\big(\mathbb{R}^{N}\big):u(x,y)=u(|x|,y) ~ \hbox{for all} ~ (x,y) \in \mathbb{R}^{m} \times  ~ \mathbb{R}^{k}\right\},
$$
$$
\mathcal{H}_{\gamma,rad_x}^{1,2}(\mathbb{R}^m \times B_r(0))=\left\{\mathcal{H}_{\gamma}^{1,2}\big(\mathbb{R}^m \times B_r(0)):u(x,y)=u(|x|,y) ~ \hbox{for all} ~ (x,y) \in \mathbb{R}^{m} \times B_r(0)\right\},
$$
$$
\mathcal{H}_{\gamma, rad_x}^{a}\big(\mathbb{R}^{N}\big)=\left\{\mathcal{H}_{\gamma}^{a}\big(\mathbb{R}^{N}\big):u(x,y)=u(|x|,y) ~ \hbox{for all} ~ (x,y) \in \mathbb{R}^{m} \times  ~ \mathbb{R}^{k}\right\}
$$
and
$$
\mathcal{D}_{\gamma,rad_{x,y}}^{1,2}\big(\mathbb{R}^{N}\big)=\left\{\mathcal{D}_{\gamma}^{1,2}\big(\mathbb{R}^{N}\big):u(x,y)=u(|x|,|y|) ~ \hbox{for all} ~ (x,y) \in \mathbb{R}^{m} \times \mathbb{R}^{k}\right\}.
$$

\begin{lemma} \label{embedding-compact}
	Let $p\in(2,2^{*}_{\gamma})$ and $r>0$. Then, the embedding of $\mathcal{H}_{\gamma,rad_x}^{1,2}(\mathbb{R}^{m}\times B_r(0))$ into $L^{p}(\mathbb{R}^{m}\times B_r(0))$ is compact.
\end{lemma}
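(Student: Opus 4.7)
Let $(u_n)$ be a bounded sequence in $\mathcal{H}^{1,2}_{\gamma,rad_x}(\mathbb{R}^m\times B_r(0))$. Since this is a closed subspace of the reflexive Hilbert space $\mathcal{H}^{1,2}_\gamma(\mathbb{R}^m\times B_r(0))$ (closedness follows from continuity of the group of rotations in the $x$-variable), I extract a subsequence with $u_n\rightharpoonup u$ weakly, and $u$ is still radial in $x$. Replacing $u_n$ by $u_n-u$, it suffices to prove $u_n\to 0$ strongly in $L^p$ under the assumption $u=0$.

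\textbf{Local strong convergence.} For any fixed $R>0$, the bounded product set $B^m_R(0)\times B_r(0)\subset\mathbb{R}^m\times\mathbb{R}^k$ falls under the compact-embedding statement given just after \eqref{embedding-continuous1}, so $\mathcal{H}^{1,2}_\gamma(B^m_R\times B_r)\hookrightarrow L^p(B^m_R\times B_r)$ is compact. A diagonal argument over $R\in\mathbb{N}$ then yields a subsequence, still denoted $(u_n)$, with $u_n\to 0$ in $L^p(B^m_R\times B_r)$ for every $R>0$.

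\textbf{Uniform tail estimate.} The heart of the matter is to show
$$\sup_n\int_{\{|x|\ge R\}\times B_r(0)}|u_n|^p\,dz \longrightarrow 0 \quad\text{as }R\to\infty.$$
This is where the radial-in-$x$ hypothesis enters. Assuming $m\ge 2$, for a.e.\ $y\in B_r(0)$ the slice $u_n(\cdot,y)$ belongs to $H^1_{rad}(\mathbb{R}^m)$, and Strauss' radial lemma gives the pointwise decay
$$|u_n(x,y)|^2|x|^{m-1}\le C\,\|\nabla_x u_n(\cdot,y)\|_{L^2(\mathbb{R}^m)}\|u_n(\cdot,y)\|_{L^2(\mathbb{R}^m)},\qquad |x|\ge 1.$$
I split $|u_n|^p=|u_n|^{p-2}|u_n|^2$ and apply this bound only to the factor $|u_n|^{p-2}$, producing a prefactor $R^{-(m-1)(p-2)/2}$ outside the integral and a $y$-integral of the form $\int_{B_r}\|u_n(\cdot,y)\|_{H^1(\mathbb{R}^m)}^{p-2}\|u_n(\cdot,y)\|_{L^2(\mathbb{R}^m)}^2\,dy$. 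This is then controlled via Hölder's inequality in $y$ combined with the continuous Grushin--Sobolev embedding $\mathcal{H}^{1,2}_\gamma\hookrightarrow L^{2^*_\gamma}$ from \eqref{embedding-continuous1}, yielding a tail bound of order $R^{-\alpha}$ for some $\alpha>0$, uniformly in $n$. Combining with the local convergence, one concludes $u_n\to 0$ in $L^p(\mathbb{R}^m\times B_r(0))$.

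\textbf{Main obstacle.} The delicate point is the tail estimate. The naive slice-wise application of Strauss produces a factor $\int_{B_r}\|u_n(\cdot,y)\|_{H^1(\mathbb{R}^m)}^p\,dy$, and by Fubini and the bounded $\mathcal{H}^{1,2}_\gamma$-norm one only has a uniform $L^2(B_r)$-bound on $y\mapsto \|u_n(\cdot,y)\|_{H^1(\mathbb{R}^m)}$, not an $L^p$-bound. The resolution is the asymmetric splitting described above: only $|u_n|^{p-2}$ needs pointwise decay, while $|u_n|^2$ is absorbed through global $L^2$-type control, so that a careful Hölder pairing of the slice-wise $H^1$- and $L^2$-quantities against the $L^{2^*_\gamma}$-norm of $u_n$ produces only quantities that are uniformly bounded by $\|u_n\|_\gamma$.
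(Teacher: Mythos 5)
Your overall architecture (strong convergence on $B_R^m(0)\times B_r(0)$ plus a uniform tail estimate on $\{|x|\ge R\}\times B_r(0)$) is reasonable, and the local part is fine, but the tail estimate --- which you correctly identify as the heart of the matter --- does not close, and the ``resolution'' you sketch cannot work as described. Writing $g_n(y)=\|\nabla_x u_n(\cdot,y)\|_{L^2(\mathbb{R}^m)}$ and $h_n(y)=\|u_n(\cdot,y)\|_{L^2(\mathbb{R}^m)}$, your slice-wise Strauss bound together with the asymmetric splitting $|u_n|^p=|u_n|^{p-2}|u_n|^2$ leads to
$$
\int_{\{|x|\ge R\}\times B_r(0)}|u_n|^p\,dz\;\le\;C\,R^{-\frac{(m-1)(p-2)}{2}}\int_{B_r(0)}g_n(y)^{\frac{p-2}{2}}\,h_n(y)^{\frac{p+2}{2}}\,dy,
$$
and the integrand on the right has total homogeneity $\frac{p-2}{2}+\frac{p+2}{2}=p>2$ in the slice norms, while the only uniform control you have is $\int_{B_r(0)}g_n^2\,dy,\ \int_{B_r(0)}h_n^2\,dy\le\|u_n\|_\gamma^2$. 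No H\"older pairing of exponents can bound an integral of total degree $p>2$ by quantities of degree $2$; and the global $L^{2^*_\gamma}(\mathbb{R}^N)$ norm cannot be brought in to make up the difference, since it is not a $y$-slice-wise quantity (H\"older in $y$ alone cannot see it), while H\"older in $(x,y)$ jointly fails because the prefactor $(g_n(y)h_n(y))^{(p-2)/2}$ is constant in $x$ over the unbounded region $\{|x|\ge R\}$. The structural problem is that slice-wise pointwise decay is the wrong quantity here: the correct decay statement for functions radial only in $x$ is for the $y$-averaged quantity $r^{m-1}\int_{B_r(0)}|u(r\omega,y)|^2\,dy$, and turning that into $L^p$ compactness is precisely the content of Lions' Lemme III.2 in \cite{Lions1982JFA}, whose proof also uses the $y$-derivatives --- which your tail argument never touches.

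The paper's proof avoids this entirely by reducing to that classical result. It multiplies $u_n$ by a cutoff $h(|x|^2)$ vanishing for $|x|\le\delta/2$; on the support of the cutoff one has $|x|^{2\gamma}\ge(\delta/2)^{2\gamma}$, so the Grushin norm controls the full (unweighted) $H^1$ norm of $v_n=h(|x|^2)u_n$, and Lions' compact embedding $H^1_{rad,x}(\mathbb{R}^m\times B_r(0))\hookrightarrow L^p$ gives $v_n\to0$, hence $u_n\to0$ in $L^p([|x|\ge\delta]\times B_r(0))$; the remaining piece $[|x|\le\delta]\times B_r(0)$ is a bounded product domain, handled by the compact embedding stated after \eqref{embedding-continuous1}. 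If you want to keep your route, you must either prove the tail estimate by Lions' actual mechanism (decay of the $y$-integrated radial profile, combined with the $\nabla_y$ information) or simply invoke his lemma after the cutoff, as the paper does. Note also that both arguments require $m\ge2$ for the radial decay in $x$, a restriction you flagged but the statement does not.
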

\begin{proof}
	It is enough to show that if $(u_n)$ is a sequence in $\mathcal{H}_{\gamma,rad_x}^{1,2}(\mathbb{R}^{m}\times B_r(0))$ such that $u_n \rightharpoonup 0$ in  $\mathcal{H}_{\gamma,rad_x}^{1,2}(\mathbb{R}^{m}\times B_r(0))$, then $u_n\to 0$ in $L^{p}(\mathbb{R}^{m}\times B_r(0))$. 
	
	Let $\delta >0$ and $h \in C^{\infty}(\mathbb{R},[0,1])$ defined by
	\begin{equation*}
		h(t)=\left \{
		\begin{array}{ccl}
			0 & \mbox{if}  & |t|\leq \delta/2, \\
			1 & \mbox{if} &  |t|\geq \delta. \\
		\end{array}
		\right.
	\end{equation*}
	Notice that if $u\in \mathcal{H}_{\gamma}^{1,2}(\mathbb{R}^{m}\times B_r(0))$ and
	$$
	v(z):=h(|x|^2)u(z), ~~ z=(x,y)\in\mathbb{R}^{m}\times B_r(0),
	$$
	then $|v|^{2}\leq|u|^{2}$, 
	\begin{equation*}
		\left|\frac{\partial v}{\partial x_i}\right|^2\leq {C}\big(|u|^2+\left|\frac{\partial u}{\partial x_i}\right|^2 \big), ~~ i=1,\ldots,m
	\end{equation*}
and
	\begin{equation*}
		\left|\frac{\partial v}{\partial y_j}\right|^2 = h^2(|x|^2)\left|\frac{\partial u}{\partial y_j}\right|^2, ~~ j=1,\ldots,k.
	\end{equation*}
Hence,
	\begin{eqnarray*}
		\int_{\mathbb{R}^{m}\times B_r(0)}|\nabla_{y}v|^2dz & = & \int_{[|x|\geq\delta/2]\times B_r(0)}h^2(|x|^2)|\nabla_{y}u|^2dz \\
		&\leq& {C}\int_{[|x|\geq\delta/2]\times B_r(0)}|x|^{2\gamma}|\nabla_{y}u|^2dz \\
		&\leq& {C}\int_{\mathbb{R}^{m}\times B_r(0)}|x|^{2\gamma}|\nabla_{y}u|^2dz. \\
	\end{eqnarray*}
Therefore,  $ v\in H^{1}(\mathbb{R}^{m}\times B_r(0))$ and
	\begin{equation*}
		||v||_{H^{1}(\mathbb{R}^{m}\times B_r(0))}\leq {C}||u||_{\mathcal{H}_{\gamma}^{1,2}(\mathbb{R}^{m}\times B_r(0))}.
	\end{equation*}
	Thereby, setting  $v_n(x,y)=h(|x|^2)u_n(x,y)$, we have  $(v_n)$ is a bounded sequence in \linebreak $H_{rad,x}^{1}(\mathbb{R}^{m}\times B_r(0))$ and $v_n \rightharpoonup 0$ in $H_{rad,x}^{1}(\mathbb{R}^{m}\times B_r(0))$. By Lions \cite[Lemme III.2 ]{Lions1982JFA}, 
	\begin{equation*}
		v_n\to 0 ~~	\hbox{in} ~~ L^{p}(\mathbb{R}^{m}\times B_r(0)), ~~ 2<p<2^{*}.
	\end{equation*}
	Since $v_n(x,y)=u_n(x,y)$ for $(x,y) \in [|x| \geq \delta] \times B_r(0)$, we derive that
		\begin{equation}\label{conv1}
	u_n\to 0 ~~	\hbox{in} ~~ L^{p}([|x| \geq  \delta] \times B_r(0)), ~~ 2<p<2^{*}.
	\end{equation}

	Furthermore, recalling that if $\Omega=B_\delta(0) \times B_r(0) \subset \mathbb{R}^N$ the embedding $\mathcal{H}_{\gamma}^{1,2}(\Omega)$ into $L^p(\Omega)$ is compact for $1\leq p<2_{\gamma}^{*}$, then
	\begin{equation}\label{conv2}
		u_n\to 0 ~~	\hbox{in} ~~ L^{p}([|x|\leq\delta]\times B_r(0)).
	\end{equation}
	Accordingly to  \eqref{conv1} and \eqref{conv2}, $u_n\to 0$	in $L^{p}(\mathbb{R}^{m}\times B_r(0))$ with  $p\in(2,2^{*}_{\gamma})$.
\end{proof}

\begin{lemma} \label{embedding-compact0}
	Let $p\in(2,2^{*}_{\gamma})$. Then, the embedding of $\mathcal{H}_{\gamma,rad_{x,y}}^{1,2}(\mathbb{R}^{N}) \hookrightarrow L^{p}(\mathbb{R}^{N})$ is compact.
\end{lemma}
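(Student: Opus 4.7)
The plan is to apply Lemma \ref{Lions-lemma} (the Grushin-weighted Lions concentration--compactness lemma) combined with the local compactness of Lemma \ref{embedding-compact}. I take a bounded sequence $(u_n)$ in $\mathcal{H}_{\gamma,rad_{x,y}}^{1,2}(\mathbb{R}^N)$ with $u_n\rightharpoonup 0$ and aim to show that $u_n\to 0$ strongly in $L^p(\mathbb{R}^N)$. By Lemma \ref{Lions-lemma}, this reduces to verifying the vanishing hypothesis
$$
\sup_{y_0\in\mathbb{R}^k}\int_{\mathbb{R}^m\times B^k_r(y_0)}|u_n|^{q}\,dz \longrightarrow 0,
$$
for some fixed $r>0$ and some $q\in[2,2^*_\gamma)$. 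I will split this supremum according to whether $|y_0|\leq R$ or $|y_0|>R$, with $R$ chosen large.

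On the bounded region $|y_0|\leq R$, the integrals are uniformly controlled by $\int_{\mathbb{R}^m\times B^k_{R+r}(0)}|u_n|^q\,dz$. Since each $u_n$ is in particular radial in $x$, the restrictions belong to $\mathcal{H}_{\gamma,rad_x}^{1,2}(\mathbb{R}^m\times B_{R+r}(0))$, and by weak continuity of restriction we still have $u_n\rightharpoonup 0$ in this subspace. Lemma \ref{embedding-compact} then gives $u_n\to 0$ strongly in $L^q(\mathbb{R}^m\times B_{R+r}(0))$, so $\sup_{|y_0|\leq R}\int_{\mathbb{R}^m\times B^k_r(y_0)}|u_n|^q\,dz\to 0$ for each fixed $R$.

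On the complementary region $|y_0|>R$, I exploit the radial symmetry in $y$ (in the case $k\geq 2$): on the sphere $\{|y|=|y_0|\}\subset\mathbb{R}^k$ one chooses $N_{y_0}\geq c_k(|y_0|/r)^{k-1}$ points with pairwise distance at least $2r$, so that the balls $B_r^k(y_i)$ are pairwise disjoint and contained in the spherical annulus $\{\,y:||y|-|y_0||\leq r\,\}$. Radial symmetry of $u_n$ in $y$ yields $\int_{B^k_r(y_i)}|u_n(x,\cdot)|^q\,dy=\int_{B^k_r(y_0)}|u_n(x,\cdot)|^q\,dy$ for every $i$, and summing and integrating over $x$ gives
$$
c_k\Bigl(\frac{|y_0|}{r}\Bigr)^{k-1}\int_{\mathbb{R}^m\times B^k_r(y_0)}|u_n|^q\,dz\leq \|u_n\|_{L^q(\mathbb{R}^N)}^q\leq C,
$$
since $(u_n)$ is bounded in $\mathcal{H}_\gamma^{1,2}(\mathbb{R}^N)\hookrightarrow L^q(\mathbb{R}^N)$ by \eqref{embedding-continuous}. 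Hence this outer supremum is bounded by $CR^{-(k-1)}$ uniformly in $n$, and choosing $R$ large first and then $n$ large delivers the vanishing required by Lemma \ref{Lions-lemma}.

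The main obstacle I anticipate is the outer estimate in the edge case $k=1$, where ``radial in $y$'' only means even and the sphere $\{|y|=|y_0|\}$ consists of two points, so the packing argument degenerates. Since $N\geq 3$ forces $m\geq 2$ in that situation, one would have to exchange the roles of $x$ and $y$, combining the radial symmetry in $x$ (via a Strauss-type pointwise decay) with an appropriate variant of Lemma \ref{Lions-lemma} whose supremum is taken over $x_0\in\mathbb{R}^m$. In every case, the technical heart of the argument is precisely the uniform-in-$n$ control of the outer integrals through the radial symmetry.
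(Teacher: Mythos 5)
Your argument is, in substance, the paper's argument: you verify the vanishing hypothesis of Lemma~\ref{Lions-lemma} by combining the local compactness of Lemma~\ref{embedding-compact} on the slab $\mathbb{R}^m\times B_{R+r}(0)$ with a sphere-packing estimate on $\{|y|=|y_0|\}$ that exploits the radial symmetry in $y$. The paper encodes the packing step in the quantity $k(y,r,O(k))$ (the maximal number of pairwise disjoint $O(k)$-images of $B_r^k(y)$) and uses that $k(y,r,O(k))\to\infty$ as $|y|\to\infty$; your lower bound $N_{y_0}\gtrsim (|y_0|/r)^{k-1}$ is exactly that claim made explicit, and the rest of the $\varepsilon$-bookkeeping is the same.

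The concern you raise about $k=1$ is not merely an inelegance of your write-up: it is a genuine gap, and it is also present in the paper's proof. For $k=1$ one has $O(1)=\{\pm 1\}$, so $k(y,r,O(1))\leq 2$ for every $y$, and the asserted limit $k(y,r,O(k))\to\infty$ fails. In fact the lemma itself is false for $k=1$: taking $\phi\in C_c^\infty(\mathbb{R}^m)$ radial, $\psi\in C_c^\infty(\mathbb{R})$, and
$$
u_n(x,y)=\phi(|x|)\,\psi(|y|-n),
$$
one obtains a bounded sequence in $\mathcal{H}_{\gamma,rad_{x,y}}^{1,2}(\mathbb{R}^N)$ (radial in $x$, even in $y$) whose support escapes to $|y|\sim n$, so $u_n\rightharpoonup 0$, while $\|u_n\|_{L^p}$ stays bounded away from $0$. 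Thus compactness genuinely fails and no rearrangement of the argument can save the statement as written; the lemma (and the parts of the paper that rely on it) must be read with the implicit restriction $k\geq 2$. Your suggested remedy of swapping the roles of $x$ and $y$ faces an additional obstruction you should be aware of: translations in $x$ do not preserve the $\gamma$-norm because of the weight $|x|^{2\gamma}$, so the $x$-supremum analogue of Lemma~\ref{Lions-lemma} is not a trivial relabelling. So you have correctly identified the weak point; what remains is to recognise that it is fatal rather than fixable, and to add the hypothesis $k\geq2$.
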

\begin{proof} In what follows, we set $q \in (2,2^*_{\gamma})$ and
$$
k(y,r,O(k))=\{n \in \mathbb{N}\,:\, \exists g_1,g_2,...,g_n \in O(k)\,:\,B^{k}_{r}(g_i(y))\cap B^{k}_r(g_j(y))=\emptyset\}.
$$	
From definition of $k(y,r,O(k))$, 
$$
\lim_{|y| \to +\infty}k(y,r,O(k))=+\infty.
$$	
A simple computation gives that for each $u \in \mathcal{H}_{\gamma,rad_{x,y}}^{1,2}(\mathbb{R}^{N})$, 
$$
\int_{\mathbb{R}^m \times B^{k}_{r}(y)}|u|^{q}\,dz \leq \frac{|u|_q^{q}}{k(y,r,O(k))}.
$$
Arguing as in the last lemma, let  $(u_n)$ be a sequence in $\mathcal{H}_{\gamma,rad_{x,y}}^{1,2}(\mathbb{R}^{N})$ such that $u_n \rightharpoonup 0$ in  $\mathcal{H}_{\gamma,rad_{x,y}}^{1,2}(\mathbb{R}^{N})$. As $(u_n)$ is a bounded sequence in $\mathcal{H}_{\gamma,rad_{x,y}}^{1,2}(\mathbb{R}^{N})$, given $\epsilon>0$, the definition of $k(y,r,O(k))$ ensures that there is $R>0$ such that
$$
\sup_{|y| \geq R}\int_{\mathbb{R}^m \times B^{k}_{r}(y)}|u_n|^{q}\,dz< \frac{\epsilon}{2}, \quad \forall n \in\mathbb{N}.
$$
On the other hand, the Lemma \ref{embedding-compact} yields
$$
\int_{\mathbb{R}^m \times B_{R+r}(0)}|u_n|^{q}\,dz \to 0, 
$$
and so,
$$
\sup_{y \in \mathbb{R}^k}\int_{\mathbb{R}^m \times B^{k}_{r}(y)}|u_n|^{q}\,dz \to 0. 
$$
Now the lemma follows by employing Lemma \ref{Lions-lemma}. 
\end{proof}

\begin{cor} If \eqref{A1} holds, then the embedding $\mathcal{H}^{a}_{\gamma,rad_{x,y}}(\mathbb{R}^{N}) \hookrightarrow L^{p}(\mathbb{R}^{N})$ is compact.
	
\end{cor}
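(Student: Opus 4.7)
The plan is to derive this corollary directly from Lemma \ref{embedding-compact0} by using condition \eqref{A1} to reduce the embedding of the weighted space $\mathcal{H}^{a}_{\gamma,rad_{x,y}}(\mathbb{R}^{N})$ to the unweighted one $\mathcal{H}_{\gamma,rad_{x,y}}^{1,2}(\mathbb{R}^{N})$, whose compactness into $L^{p}(\mathbb{R}^{N})$ has just been established. There is essentially no obstacle here; the heavy lifting has already been done in Lemmas \ref{Lions-lemma}, \ref{embedding-compact} and \ref{embedding-compact0}.

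Concretely, I would first observe that \eqref{A1} implies the continuous inclusion $\mathcal{H}^{a}_{\gamma,rad_{x,y}}(\mathbb{R}^{N}) \hookrightarrow \mathcal{H}_{\gamma,rad_{x,y}}^{1,2}(\mathbb{R}^{N})$. Indeed, for every $u \in \mathcal{H}^{a}_{\gamma,rad_{x,y}}(\mathbb{R}^{N})$,
\begin{equation*}
\|u\|_{\gamma}^{2} = \int_{\mathbb{R}^{N}} \bigl(|\nabla_{\gamma} u|^{2} + |u|^{2}\bigr)\,dz \leq \max\Bigl\{1, \frac{1}{a_{0}}\Bigr\}\int_{\mathbb{R}^{N}} \bigl(|\nabla_{\gamma} u|^{2} + a(z)|u|^{2}\bigr)\,dz = C\|u\|^{2},
\end{equation*}
so any bounded sequence in $\mathcal{H}^{a}_{\gamma,rad_{x,y}}(\mathbb{R}^{N})$ is also bounded in $\mathcal{H}_{\gamma,rad_{x,y}}^{1,2}(\mathbb{R}^{N})$, and the radial symmetry in $(x,y)$ is preserved.

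Next, given a bounded sequence $(u_{n}) \subset \mathcal{H}^{a}_{\gamma,rad_{x,y}}(\mathbb{R}^{N})$, I can extract (using reflexivity of the Hilbert space) a subsequence that converges weakly to some $u$ in $\mathcal{H}^{a}_{\gamma,rad_{x,y}}(\mathbb{R}^{N})$, and hence weakly in $\mathcal{H}_{\gamma,rad_{x,y}}^{1,2}(\mathbb{R}^{N})$ by the continuous inclusion above. Applying Lemma \ref{embedding-compact0} to $(u_{n}-u)$, I conclude that $u_{n}\to u$ strongly in $L^{p}(\mathbb{R}^{N})$ for every $p \in (2, 2^{*}_{\gamma})$, which is precisely the claimed compact embedding. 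No additional argument is required.
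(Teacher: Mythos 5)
Your argument is correct and coincides with the paper's own proof, which likewise derives the corollary immediately from Lemma \ref{embedding-compact0} via the continuous inclusion $\mathcal{H}^{a}_{\gamma,rad_{x,y}}(\mathbb{R}^{N}) \hookrightarrow \mathcal{H}_{\gamma,rad_{x,y}}^{1,2}(\mathbb{R}^{N})$ guaranteed by \eqref{A1}. You have merely spelled out the details that the paper leaves implicit.
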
	
\begin{proof} The corollary is an immediate consequence of the last lemma, because we have the continuous embedding of $\mathcal{H}_{\gamma}^{a}(\mathbb{R}^N)$ into $\mathcal{H}_{\gamma}^{1,2}(\mathbb{R}^{N})$.
	
\end{proof}

\begin{lemma} \label{embedding-compact01}
	Let  $(u_n) \subset \mathcal{D}_{\gamma,rad_{x,y}}^{1,2}(\mathbb{R}^{N})$ be a bounded sequence with $u_n \rightharpoonup 0$ in $\mathcal{D}_{\gamma,rad_{x,y}}^{1,2}(\mathbb{R}^{N})$. Then, for each $0<a<b<+\infty$ and $p \in (2,2^{*}_{\gamma})$,  
	$$
	\int_{\{a<|u_n|<b\}}|u_n|^{p}\,dz \to 0. 
	$$
\end{lemma}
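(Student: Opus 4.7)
The plan is to reduce the statement to the compact embedding in Lemma~\ref{embedding-compact0} via a truncation argument. Fix a smooth cutoff $\chi\in C^\infty([0,\infty),[0,1])$ with $\chi\equiv 1$ on $[a,b]$ and $\chi\equiv 0$ outside $[a/2,2b]$, and set $h(s):=s\,\chi(|s|)$ for $s\in\mathbb{R}$. Then $h$ is Lipschitz, $h(s)=s$ on $\{a\leq|s|\leq b\}$, $|h(s)|\leq b$, and $h(s)=0$ for $|s|\leq a/2$ or $|s|\geq 2b$. Define $w_n:=h(u_n)$. The chain rule for Lipschitz compositions yields $|\nabla_\gamma w_n|\leq\mathrm{Lip}(h)\,|\nabla_\gamma u_n|$ a.e., so $(|\nabla_\gamma w_n|_2)$ is bounded. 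Since $\mathrm{supp}\,w_n\subset E_n:=\{a/2<|u_n|<2b\}$, Chebyshev combined with \eqref{S} gives $|E_n|\leq(2/a)^{2^*_\gamma}|u_n|^{2^*_\gamma}_{2^*_\gamma}\leq C$, and together with $|w_n|\leq b$ this bounds $|w_n|_2$. As $u_n$ is radial in $(x,y)$, so is $w_n$. Hence $(w_n)$ is bounded in $\mathcal{H}_{\gamma,rad_{x,y}}^{1,2}(\mathbb{R}^N)$.

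Next I identify the weak limit of $(w_n)$. Using the compact embedding $\mathcal{H}_\gamma^{1,2}(\Omega)\hookrightarrow L^1(\Omega)$ on product-type bounded domains $\Omega=B^m_R(0)\times B^k_R(0)$ (as remarked after \eqref{embedding-continuous1}) and a diagonal extraction in $R$, a subsequence of $(u_n)$ converges a.e.\ on $\mathbb{R}^N$ to the weak limit $0$. By continuity of $h$ with $h(0)=0$, $w_n\to 0$ a.e.\ along this subsequence, which forces any weakly convergent subsequence of $(w_n)$ in $\mathcal{H}_{\gamma,rad_{x,y}}^{1,2}(\mathbb{R}^N)$ to have weak limit $0$. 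Lemma~\ref{embedding-compact0} then yields $w_n\to 0$ strongly in $L^p(\mathbb{R}^N)$ for every $p\in(2,2^*_\gamma)$.

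Finally, on $\{a<|u_n|<b\}$ one has $\chi(|u_n|)=1$, hence $w_n=u_n$ pointwise there, and therefore
\begin{equation*}
\int_{\{a<|u_n|<b\}}|u_n|^p\,dz=\int_{\{a<|u_n|<b\}}|w_n|^p\,dz\leq|w_n|_p^p\longrightarrow 0.
\end{equation*}
Because this reasoning applies to every subsequence of $(u_n)$, the full sequence satisfies the claim. The only delicate point in this plan is the a.e.\ identification of the weak limit: $\mathcal{D}_\gamma^{1,2}$ does not embed into $L^2_{loc}$ in a direct manner, so one must appeal to the local compact embedding of $\mathcal{H}_\gamma^{1,2}(\Omega)$ valid on product bounded domains to justify a.e.\ convergence of a subsequence; the rest is a routine truncation/chain-rule computation.
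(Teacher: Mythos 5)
Your proof is correct and follows essentially the same route as the paper: truncate $u_n$ by a smooth function that vanishes near $0$ and for large arguments and equals the identity on $[a,b]$, observe the truncated sequence is bounded in $\mathcal{H}_{\gamma,rad_{x,y}}^{1,2}(\mathbb{R}^N)$, and invoke the compact embedding of Lemma~\ref{embedding-compact0}. The only difference is that you spell out what the paper calls ``a direct computation'' — the Chebyshev bound for the $L^2$ norm of the truncation and the a.e.\ identification of its weak limit as $0$ — which are worthwhile details to make explicit.
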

\begin{proof} In the sequel, let us consider $g \in C^{\infty}(\mathbb{R},\mathbb{R})$ satisfying
$$
g(t)=0 \quad \mbox{for} \quad |t| \leq \frac{a}{2} \quad \mbox{or} \quad |t| \geq b+1, 
$$	 
and 
$$
g(t)=t \quad \mbox{for} \quad a\leq |t| \leq b.  
$$	
Using the function $g$, we set the sequence $v_n(x,y)=g(u_n(x,y))$. A direct computation shows that $v_n \in \mathcal{H}_{\gamma,rad_{x,y}}^{1,2}(\mathbb{R}^{N})$, and that it is a bounded sequence in $\mathcal{H}_{\gamma,rad_{x,y}}^{1,2}(\mathbb{R}^{N})$. Thereby, by Lemma \ref{embedding-compact0}, $v_n \to 0$ in $L^{p}(\mathbb{R}^N)$, that is,
$$
\int_{\mathbb{R}^N}|v_n|^{p}\,dz \to 0,
$$
and so, 
$$
\int_{\{a<|u_n|<b\}}|v_n|^{p}\,dz \to 0.
$$
Using the fact that $g(t)=t$ for $a \leq |t| \leq b$, it follows that
$$
\int_{\{a<|u_n|<b\}}|u_n|^{p}\,dz \to 0,
$$
showing the lemma.

\end{proof}

\begin{lemma} \label{embedding-compact-coercive}
Assume that \eqref{A1} and \eqref{A4} hold. Then the embedding of $\mathcal{H}_{\gamma,rad_x}^{a}(\mathbb{R}^{N}) \hookrightarrow L^{p}(\mathbb{R}^{N})$ is compact for all $p\in[2,2^{*}_{\gamma})$.
\end{lemma}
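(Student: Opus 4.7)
Plan: I will mirror the proof of Lemma \ref{embedding-compact0}, replacing the role played there by the orbit-counting function $k(y,r,O(k))$ with the coercive weight provided by \eqref{A4}. Take $(u_n)\subset\mathcal{H}^{a}_{\gamma,rad_x}(\mathbb{R}^N)$ with $u_n\rightharpoonup 0$; by linearity it suffices to show $u_n\to 0$ strongly in $L^p(\mathbb{R}^N)$ for every $p\in[2,2^{*}_{\gamma})$. Since $(u_n)$ is bounded in $\mathcal{H}^{a}_{\gamma}(\mathbb{R}^N)$, the continuous embedding into $\mathcal{H}^{1,2}_{\gamma}(\mathbb{R}^N)$ together with \eqref{Sobolev-inequality} yields a uniform bound $\|u_n\|_{L^{2^{*}_{\gamma}}}\leq C'$.

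The main step will be to verify the hypothesis of Lemma \ref{Lions-lemma}. Fix $r>0$ and $q\in(2,2^{*}_{\gamma})$. I will establish
$$\sup_{y_{0}\in\mathbb{R}^{k}}\int_{\mathbb{R}^{m}\times B_{r}^{k}(y_{0})}|u_n|^{q}\,dz\longrightarrow 0$$
by splitting on whether $|y_{0}|$ is large or bounded. For $|y_{0}|$ large, observe that $\int a(z)|u_n|^{2}\,dz\leq\|u_n\|^{2}\leq C$; given $\epsilon>0$, \eqref{A4} supplies $R>0$ with $a(z)\geq 2C/\epsilon$ whenever $|y|\geq R$, so
$$\sup_{|y_{0}|\geq R+r}\int_{\mathbb{R}^{m}\times B_{r}^{k}(y_{0})}|u_n|^{2}\,dz\leq\frac{\epsilon}{2},$$
and interpolating this $L^{2}$ smallness against the uniform $L^{2^{*}_{\gamma}}$-bound promotes it to the same kind of smallness in $L^{q}$. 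For $|y_{0}|<R+r$, one has $B_{r}^{k}(y_{0})\subset B_{R+2r}^{k}(0)$, and since $(u_n)$ restricts to a bounded radial-in-$x$ weakly null sequence on $\mathbb{R}^{m}\times B_{R+2r}(0)$, Lemma \ref{embedding-compact} gives $u_n\to 0$ in $L^{q}(\mathbb{R}^{m}\times B_{R+2r}(0))$. Combining these yields the supremum-vanishing condition, and Lemma \ref{Lions-lemma} then delivers $u_n\to 0$ in $L^{p}(\mathbb{R}^{N})$ for every $p\in(2,2^{*}_{\gamma})$.

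For the endpoint $p=2$, which Lemma \ref{Lions-lemma} does not cover and which I expect to be the main obstacle, I would cut off in $y$ as above via \eqref{A4} to get $\int_{|y|\geq R}|u_n|^{2}\,dz\leq\epsilon$, and then handle the slab $\mathbb{R}^{m}\times B_{R}^{k}(0)$ by an additional split along $|x|$: on $B_{\delta}(0)\times B_{R}(0)$ the classical compact embedding of $\mathcal{H}^{1,2}_{\gamma}$ into $L^{2}$ on bounded domains gives $u_n\to 0$; on $\{|x|\geq\delta\}\times B_{R}(0)$ the weight satisfies $|x|^{2\gamma}\geq\delta^{2\gamma}>0$, making the full $H^{1}$-norm comparable to $\|\cdot\|_{\mathcal{H}^{1,2}_{\gamma}}$, and the radial-in-$x$ hypothesis—via a Strauss-type pointwise decay argument applied in the spirit of the cut-off $v_n=h(|x|^{2})u_n$ used in the proof of Lemma \ref{embedding-compact}—is used to rule out mass escape as $|x|\to\infty$. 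This last step is the most delicate, because unlike the $L^{q}$ case with $q>2$ the Strauss bound does not by itself produce $L^{2}$-tail smallness, so one must exploit the weighted structure of $\mathcal{H}^{a}_{\gamma}$ more carefully.
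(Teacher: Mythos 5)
For $p\in(2,2^{*}_{\gamma})$ your argument is correct and is, at heart, the paper's own: the authors also split $\mathbb{R}^{N}$ into the region $\mathbb{R}^{m}\times\{|y|\geq R_{\varepsilon}\}$, where \eqref{A4} gives a uniform-in-$n$ bound $\int a|u_n|^{2}\geq \frac{2}{\varepsilon}\int|u_n|^{2}$ and interpolation with the $L^{2^{*}_{\gamma}}$ bound upgrades the $L^{2}$ smallness to $L^{q}$ smallness, and the slab $\mathbb{R}^{m}\times B_{R_{\varepsilon}}(0)$, where Lemma \ref{embedding-compact} gives strong convergence. The only real difference is that you route the conclusion through Lemma \ref{Lions-lemma} (checking the vanishing of $\sup_{y_0}\int_{\mathbb{R}^{m}\times B_r(y_0)}|u_n|^{q}$), which is a valid but superfluous detour: once you have uniform $L^{q}$ smallness on $\{|y|\geq R\}$ and $u_n\to 0$ in $L^{q}$ of the slab, you already have $u_n\to 0$ in $L^{q}(\mathbb{R}^{N})$ directly, which is how the paper concludes.

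The genuine gap is the endpoint $p=2$, which you yourself flag but do not close, and the route you sketch cannot be completed as stated. A Strauss-type radial bound controls pointwise decay in $|x|$, not $L^{2}$ tails, and in fact for $\gamma=0$ (which the lemma allows and Theorem \ref{Theorem2} uses) the endpoint claim is false: take $a(x,y)=1+|y|^{2}$, which satisfies \eqref{A1} and \eqref{A4}, and $u_n(x,y)=n^{-m/2}\phi(|x|/n)\psi(y)$ with fixed smooth compactly supported $\phi,\psi$; then $(u_n)$ is bounded in $\mathcal{H}^{a}_{\gamma,rad_x}(\mathbb{R}^{N})$, $u_n\rightharpoonup 0$, but $|u_n|_{2}$ is constant, so no argument can rescue $L^{2}$-compactness in that generality (for $\gamma>0$ the term $\int|x|^{2\gamma}|\nabla_y u_n|^{2}$ rules out this particular example, but you offer no proof there either). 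To be fair, the paper's own treatment of $p=2$ is equally thin: it invokes Lemma \ref{embedding-compact} on the slab ``for all $p\in[2,2^{*}_{\gamma})$'' although that lemma is stated only for $p\in(2,2^{*}_{\gamma})$, and the same spreading-in-$x$ construction (with $u$ constant in $y$ on the slab) defeats $L^{2}$-compactness there. Fortunately the endpoint is never needed: in the proof of Theorem \ref{Theorem2} one only uses $L^{2}$-boundedness together with strong $L^{q}$-convergence for a single $q\in(2,2^{*}_{\gamma})$, so the correct and useful content of the lemma is exactly the part you did prove.
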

\begin{proof} 
	It suffices to show that for any sequence  $(u_n) \subset \mathcal{H}_{\gamma, rad_x}^{a}(\mathbb{R}^{N})$ with $u_n \rightharpoonup 0$ in $\mathcal{H}_{\gamma, rad_x}^{a}(\mathbb{R}^{N})$, we must have $u_n\to 0$ in $L^{p}(\mathbb{R}^{N})$. By \eqref{A4}, given $\varepsilon>0$, we can take $R_{\varepsilon}>0$ such that
	$$ 
	\frac{1}{a(x,y)}\leq\frac{\varepsilon}{2}, ~~ \hbox{for all} ~~ (x,y)\in\mathbb{R}^{m}\times [|y|\geq R_{\varepsilon}].
	$$
From this, 
	$$
	\int_{\mathbb{R}^{m}\times [|y|\geq R_{\varepsilon}]} |u_n|^2  \, dz \leq\frac{\varepsilon}{2}\int_{\mathbb{R}^{m}\times [|y|\geq R_{\varepsilon}]} a(x,y)|u_n|^2 \, dz < \varepsilon M, 
	$$
	where $M=\sup_{n}||u_n||^2_a$. 
	
	Notice that \eqref{A1} and \eqref{embedding-continuous1} imply that the space $\mathcal{H}_{\gamma, rad_x}^{a}(\mathbb{R}^{m}\times [|y|\geq R_{\varepsilon}])$ is continuously immersed in $L^{2^{*}_{\gamma}}(\mathbb{R}^{m}\times [|y|\geq R_{\varepsilon}])$. Then, the last inequality together with the interpolation inequality ensures that for each  $p\in(2, 2^{*}_{\gamma})$, 
	$$
	\int_{\mathbb{R}^{m}\times [|y|\geq R_{\varepsilon}]} |u_n|^p \, dz < \varepsilon^{\frac{(1-\alpha)p}{2}} \mathcal{C}_p,
	$$
	where $\alpha=\frac{(p-2)2^{*}_{\gamma}}{(2^{*}_{\gamma}-2)p}$. 
	
	Once again, by \eqref{A1} and Lemma \ref{embedding-compact} it is easy to conclude that the the embedding of $\mathcal{H}_{\gamma, rad_x}^{a}(\mathbb{R}^{m}\times B_{R_{\varepsilon}}(0))$ into $L^{p}(\mathbb{R}^{m}\times B_{R_{\varepsilon}}(0))$ is compact for all $p\in[2,2^{*}_{\gamma})$, and so, $u_n\to 0$ in $L^{p}(\mathbb{R}^{m}\times B_{R_{\varepsilon}}(0))$. This proves the desired result.
	
\end{proof}

\section{Proof of Theorem \ref{Theorem1}}

From now on, since we intend to find nonnegative solutions, without loss of generality we assume that 
$$
f(s)=0, \quad \forall s \leq 0.
$$

The energy functional associated with problem $(P)_1$ is the functional  $I:\mathcal{H}_{\gamma}^{1,2}(\mathbb{R}^{N})\longrightarrow\mathbb{R}$ given by 
\begin{equation*}
I(u)=\frac{1}{2}\int_{\mathbb{R}^{N}}\big(|\nabla_{\gamma}u|^2 +|u|^2\big) \, dz - \int_{\mathbb{R}^{N}} F(u)\, dz.
\end{equation*}
The reader is invited to see that weak solutions of problem $(P)_1$ are critical points of $I$. 

In order to overcome the loss of compactness involving the space $\mathcal{H}_{\gamma}^{1,2}(\mathbb{R}^{N})$, we will restrict to functional $I$ to the space $\mathcal{H}_{\gamma,rad_{x,y}}^{1,2}(\mathbb{R}^{N})$, and this sense, an important tool in our approach is the {\it Principle of Symmetric Criticality} due to Palais (See book \cite[Theorem 1.28]{Willem1996}), which will be used to obtain solutions on the whole space $\mathcal{H}_{\gamma}^{1,2}(\mathbb{R}^{N})$.

Our first lemma establishes that $I$ satisfies the mountain pass geometry.

\begin{lemma}\label{MP-geometry} There exist positive numbers $\rho$ and $\alpha$ such that:
	\begin{description}
		\item[i)] $I(u)\geq\alpha$ if  $||u||_{\gamma}=\rho$.
		\item[ii)] There exists $v\in \mathcal{H}_{\gamma,rad_{x,y}}^{1,2}(\mathbb{R}^{N})$ such that $||v||_{\gamma}>\rho$ and $I(v)< 0$.
	\end{description}
\end{lemma}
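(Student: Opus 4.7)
For part (i), the plan is a standard estimate built from $(f_1)$ and $(f_2)$. Together they yield: for every $\varepsilon>0$ there is $C_\varepsilon>0$ such that $|F(s)|\le(\varepsilon/2)s^2+C_\varepsilon|s|^q$ for all $s\in\mathbb{R}$, with $q\in(2,2^*_\gamma)$ coming from $(f_2)$. Plugging into $I$ and invoking the continuous embeddings in \eqref{embedding-continuous} for $p=2$ and $p=q$, I obtain
\[
I(u)\ \ge\ \Bigl(\frac{1}{2}-\frac{\varepsilon}{2}\Bigr)\|u\|_\gamma^2-C_\varepsilon\,\mathcal{C}_q^{\,q}\,\|u\|_\gamma^q.
\]
Fixing $\varepsilon=1/2$, the right-hand side becomes $\tfrac14\|u\|_\gamma^2-C\|u\|_\gamma^q$ with $q>2$, which is strictly positive for all sufficiently small $\rho:=\|u\|_\gamma$, giving the required $\rho$ and $\alpha>0$.

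For part (ii), the plan is a two-step construction carried out entirely in $\mathcal{H}_{\gamma,rad_{x,y}}^{1,2}(\mathbb{R}^N)$: first pick a fixed $w$ with $\int F(w)\,dz>\tfrac12\int w^2\,dz$, then dilate it. For the first step, set $w(z)=s_0\,\theta(|z|)$, where $\theta(r)=1$ on $[0,R]$, $\theta(r)=R+1-r$ on $[R,R+1]$, and $\theta(r)=0$ for $r\ge R+1$. Then $w$ is Lipschitz and depends only on $|z|$, so in particular it is radial in $x$ and in $y$ separately and belongs to $\mathcal{H}_{\gamma,rad_{x,y}}^{1,2}(\mathbb{R}^N)$. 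Since $w\equiv s_0$ on a ball of volume $\omega_N R^N$, the transition annulus has volume $O(R^{N-1})$, and $|F(w)|\le\max_{[0,s_0]}|F|$ there, it follows that
\[
\int_{\mathbb{R}^N} F(w)\,dz-\frac12\int_{\mathbb{R}^N} w^2\,dz\ =\ \omega_N R^N\Bigl(F(s_0)-\frac{s_0^2}{2}\Bigr)+O(R^{N-1}),
\]
which is strictly positive for $R$ large, thanks to $(f_3)$. For the second step, define $v_\lambda(x,y)=w(x/\lambda,y/\lambda)$, which still lies in $\mathcal{H}_{\gamma,rad_{x,y}}^{1,2}(\mathbb{R}^N)$. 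A change of variables, using $|x|^{2\gamma}=\lambda^{2\gamma}|x/\lambda|^{2\gamma}$, gives
\[
I(v_\lambda)=\frac{\lambda^{N-2}}{2}\!\int|\nabla_x w|^2dz+\frac{\lambda^{N-2+2\gamma}}{2}\!\int|x|^{2\gamma}|\nabla_y w|^2dz+\lambda^{N}\!\Bigl(\frac12\!\int w^2\,dz-\!\int F(w)\,dz\Bigr).
\]
The parenthetical term is negative by the first step, and since $\gamma<1$ we have $N-2+2\gamma<N$, so the two gradient contributions are $o(\lambda^N)$ as $\lambda\to\infty$. Consequently $I(v_\lambda)\to-\infty$ while $\|v_\lambda\|_\gamma\to\infty$, so taking $\lambda$ sufficiently large the function $v:=v_\lambda$ fulfils $\|v\|_\gamma>\rho$ and $I(v)<0$.

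The main obstacle is the Grushin gradient term $\int|x|^{2\gamma}|\nabla_y w|^2\,dz$: a single Berestycki--Lions type radial cutoff with expanding radius $R$ produces $\int|\nabla_\gamma w_R|^2=O(R^{N-1+2\gamma})$, which fails to be $o(R^N)$ once $\gamma\ge 1/2$. Decoupling the cutoff $w$ (chosen once, large enough to make the potential beat the mass) from the subsequent dilation $\lambda$ (which scales the anisotropic gradient energy through the softer exponent $\lambda^{N-2+2\gamma}$) exploits precisely the full hypothesis $0\le\gamma<1$.
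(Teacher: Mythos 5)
Your proposal is correct and follows essentially the same route as the paper: for (i) the identical $|F(s)|\le\frac{1}{4}s^2+C|s|^q$ estimate plus the continuous embeddings, and for (ii) the same two-step scheme of first fixing a function with $\int\bigl(F(w)-\tfrac12 w^2\bigr)\,dz>0$ (which the paper merely asserts from $(f_3)$ and you construct explicitly via the standard truncation) and then dilating $z\mapsto z/\lambda$, with the anisotropic gradient term scaling as $\lambda^{N-2+2\gamma}=o(\lambda^N)$ precisely because $\gamma<1$. Your closing remark about why the single expanding-cutoff argument fails for $\gamma\ge 1/2$ is a correct and worthwhile observation, but it does not change the fact that the argument itself coincides with the paper's.
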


\begin{proof} {\bf i)} From $(f_1)$ and $(f_2)$, there is a constant $C>0$ such that
	\begin{equation*}
	|F(t)|\leq \frac{1}{4}|t|^2+C|t|^{q}, ~~ \hbox{for all} ~~ t\in\mathbb{R}.
	\end{equation*}
	Hence,
$$
		I(u)  =  \frac{1}{2}||u||_{\gamma}^2 -\int_{\mathbb{R}^{N}}F(u)dz \geq \frac{1}{2}||u||_{\gamma}^2 - \frac{1}{4}\int_{\mathbb{R}^{N}}|u|^2\,dz-C\int_{\mathbb{R}^{N}}|u|^q\,dz. 
$$
	Since the embedding $\mathcal{H}_{\gamma,rad_{x,y}}^{1,2}(\mathbb{R}^{N}) \hookrightarrow L^{q}(\mathbb{R}^{N})$ is continuous,
	$$
	I(u)\geq \frac{1}{2}||u||_{\gamma}^2-\frac{1}{4}||u||_{\gamma}^2-C||u||_{\gamma}^{q},
	$$
and so, 
	$$
	I(u)\geq \frac{1}{4}||u||_{\gamma}^2-C||u||_{\gamma}^{q}.
	$$
	Now, if $||u||_{\gamma}=\rho$ is small enough, it follows that 
	$$
	I(u)\geq \frac{1}{4}\rho^2-C\rho^{q}:=\alpha>0.
	$$
	
	\noindent {\bf ii)} By $(f_3)$, we know that there is a nonnegative function $\varphi \in C_0^{\infty}(\mathbb{R}^N) \cap \mathcal{H}_{\gamma,rad_{x,y}}^{1,2}(\mathbb{R}^{N}) \setminus \{0\}$ satisfying 
\begin{equation} \label{EQ1}
	\displaystyle \int_{\mathbb{R}^N}(F(\varphi)-\frac{1}{2}|\varphi|^2)\,dz>0. 
\end{equation}	
By using the function $\varphi$, for each $t>0$ we fix the function 
	$$
	\varphi_t(z)=\varphi(z/t), \quad  \forall z \in \mathbb{R}^N. 
	$$
	A direct computation leads to
	$$
	I(\varphi_t)=\frac{t^{N-2}}{2}\int_{\mathbb{R}^{N}}|\nabla_{x}\varphi|^2\,dz +\frac{t^{N+2\gamma-2}}{2}\int_{\mathbb{R}^{N}}|x|^{2\gamma}|\nabla_{y}\varphi|^2\,dz+ t^{N}\left(\frac{1}{2}\int_{\mathbb{R}^{N}}|\varphi|^2 \, dz - \int_{\mathbb{R}^{N}} F(\varphi)\, dz\right).
	$$
Now, the fact that $0\leq \gamma<1$ combines with (\ref{EQ1}) to give
$$
I(\varphi_t) \to -\infty \quad \mbox{as} \quad t \to +\infty.
$$	
Therefore, the result follows by fixing $v=\varphi_t$ for $t$ large enough.	
\end{proof}

The last lemma permits to conclude that the mountain pass level given by 
\begin{equation*}
d:=\inf_{g\in\Gamma}\max_{t\in [0,1]}I(g(t))\geq\alpha>0,
\end{equation*}
where 
\begin{equation*}
\Gamma=\{\xi \in C([0,1],\mathcal{H}_{\gamma,rad_{x,y}}^{1,2}(\mathbb{R}^{N})):\xi(0)=0 ~ \hbox{and} ~ \xi(1)=v \},
\end{equation*}
is well defined.

Since we are not assuming in this section the Ambrosetti-Rabinowitz condition, we will adapt for our problem the same approach explored in Jeanjean \cite{Jeanjean}, which was also used in 	Hirata, Ikoma and Tanaka \cite{JNK}.

Hereafter, let us consider an auxiliary functional $\tilde{I}:\mathbb{R} \times \mathcal{H}_{\gamma,rad_{x,y}}^{1,2}(\mathbb{R}^{N}) \to \mathbb{R}$ defined by 
$$
\tilde{I}(s,u(z))=I(u(e^{-s}x,e^{-(\gamma+1)s}y)), \quad z=(x,y) \in \mathbb{R}^N=\mathbb{R}^m \times \mathbb{R}^k,
$$
that is,
$$
\tilde{I}(s,u)= \frac{e^{(m+(\gamma+1)-2)s}}{2}\|u\|_{\mathcal{D}_{\gamma}^{1,2}\big(\mathbb{R}^{N}\big)}^{2}+\frac{e^{(m+(\gamma+1)k)s}}{2}|u|_{2}^{2}-e^{(m+(\gamma+1)k)s}\int_{\mathbb{R}^N}F(u)\,dz.
$$
It is easy to check that 
$$
\tilde{I}(0,u)=I(u), \quad \forall u \in \mathcal{H}_{\gamma,rad_{x,y}}^{1,2}(\mathbb{R}^{N}).
$$

In what follows, we equip the space $\mathbb{R} \times\mathcal{H}_{\gamma,rad_{x,y}}^{1,2}(\mathbb{R}^{N})$ with the standard product norm $\|(s,u)\|_{\mathbb{R} \times \mathcal{H}_{\gamma}^{1,2}(\mathbb{R}^{N})}=\left(|s|^{2}+\|u\|_{\gamma}^{2}\right)^{\frac{1}{2}}$.

Arguing of the same way as in Lemma \ref{MP-geometry}, we have that $\tilde{I}$ also satisfies the mountain pass geometry, more precisely, we have the result below:

\begin{lemma}\label{MP-geometry2} There exist positive numbers $\rho$ and $\alpha$ such that:
	\begin{description}
		\item[i)] $\tilde{I}(s,u)\geq\alpha$ if  $\|(s,u)\|_{\mathbb{R} \times \mathcal{H}_{\gamma,rad_{x,y}}^{1,2}(\mathbb{R}^{N})}=\rho$.
		\item[ii)] There exists $v\in \mathcal{H}_{\gamma}^{1,2}(\mathbb{R}^{N})$ such that $\|(0,v)\|_{\mathbb{R} \times \mathcal{H}_{\gamma,rad_{x,y}}^{1,2}(\mathbb{R}^{N})}=\|v\|>\rho$ and $\tilde{I}(0,v)=I(v)< 0$.
	\end{description}
\end{lemma}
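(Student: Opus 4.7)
The plan is to run the same argument as in Lemma \ref{MP-geometry}, now applied to the scaled functional $\tilde I$, with the only genuinely new ingredient being the control of the exponential factors $e^{(m+(\gamma+1)-2)s}$ and $e^{(m+(\gamma+1)k)s}$ that appear in the definition of $\tilde I(s,u)$.

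For part (i), I would begin, exactly as in Lemma \ref{MP-geometry}, by using $(f_1)$ and $(f_2)$ to write the pointwise bound $|F(t)| \leq \tfrac{1}{4}|t|^2 + C|t|^q$ for every $t \in \mathbb{R}$. Substituting into $\tilde I(s,u)$ produces
\[
\tilde I(s,u) \ \geq\ \tfrac{1}{2}e^{(m+(\gamma+1)-2)s}\,\|u\|_{\mathcal D_\gamma^{1,2}(\mathbb{R}^{N})}^{2}+ \tfrac{1}{4}e^{(m+(\gamma+1)k)s}\,|u|_{2}^{2}- C\,e^{(m+(\gamma+1)k)s}\,|u|_{q}^{q}.
\]
Since $\|(s,u)\|_{\mathbb{R}\times\mathcal H_\gamma^{1,2}(\mathbb{R}^{N})} = \rho$ forces $|s|\leq\rho$, both exponential factors lie in a compact subinterval of $(0,\infty)$ that approaches $\{1\}$ as $\rho \to 0$. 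Combining with the continuous embedding $\mathcal H_{\gamma,rad_{x,y}}^{1,2}(\mathbb{R}^{N})\hookrightarrow L^{q}(\mathbb{R}^{N})$ established in Section~2, the right-hand side is bounded below by $c_1\|u\|_\gamma^{2}-c_2\|u\|_\gamma^{q}$ for positive constants depending only on $\rho$. Choosing $\rho$ small enough that the quadratic term dominates on the sphere yields the desired conclusion $\tilde I(s,u)\geq \alpha>0$.

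For part (ii), I would simply take $v$ to be the function produced by Lemma \ref{MP-geometry}(ii), which already lives in $\mathcal H_{\gamma,rad_{x,y}}^{1,2}(\mathbb{R}^{N})$, has $\|v\|_\gamma > \rho$, and satisfies $I(v) < 0$. The identity $\tilde I(0,v)=I(v)$ noted just before the lemma, together with $\|(0,v)\|_{\mathbb{R}\times\mathcal H_\gamma^{1,2}(\mathbb{R}^{N})} = \|v\|_\gamma$, transfers both properties immediately to $\tilde I$.

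The main, and essentially only, point requiring care is tracking the scaling factors: once $\rho$ is fixed small, the exponentials are uniformly bounded above and below by positive constants and the estimates from the proof of Lemma \ref{MP-geometry} go through verbatim. No further compactness or functional-analytic result beyond those developed in Section~2 is needed.
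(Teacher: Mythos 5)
Your part (ii) is exactly what the paper intends: take the $v$ produced by Lemma \ref{MP-geometry}(ii) and use $\tilde I(0,v)=I(v)$ together with $\|(0,v)\|_{\mathbb R\times\mathcal H_{\gamma}^{1,2}(\mathbb R^N)}=\|v\|_{\gamma}$; nothing more is needed there. The gap is in part (i). Your chain of estimates is correct up to
$\tilde I(s,u)\ \ge\ c_1\|u\|_{\gamma}^{2}-c_2\|u\|_{\gamma}^{q}$ for $|s|\le\rho$ (the exponential factors are indeed uniformly bounded above and below on $|s|\le\rho$, and the embedding into $L^{q}$ controls the nonlinear term), but the final inference ``choosing $\rho$ small enough that the quadratic term dominates on the sphere'' is a non sequitur on the sphere of the \emph{product} space. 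On $\{\|(s,u)\|=\rho\}$ one only has $\|u\|_{\gamma}^{2}=\rho^{2}-|s|^{2}$, which ranges over the whole interval $[0,\rho^2]$; in particular $(s,u)=(\rho,0)$ lies on that sphere and $\tilde I(\rho,0)=0$, so no uniform bound $\tilde I\ge\alpha>0$ can hold there. The lower bound $c_1t^{2}-c_2t^{q}\ge\alpha>0$ requires $t=\|u\|_{\gamma}$ to be bounded away from $0$, which the product sphere does not provide.

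To be fair, this defect sits in the literal statement of the lemma itself (the paper offers no proof, asserting only that one argues ``in the same way'' as Lemma \ref{MP-geometry}), and it is harmless for what follows: the subsequent argument never uses a sphere estimate for $\tilde I$, only the identity $d=\tilde d$ combined with $d\ge\alpha>0$ from Lemma \ref{MP-geometry}. That identity comes from the correspondence between a path $\tilde g=(s,g)\in\Gamma$ for $\tilde I$ and the path $t\mapsto g(t)(e^{-s(t)}x,e^{-(\gamma+1)s(t)}y)$ for $I$, as in Jeanjean and Hirata--Ikoma--Tanaka, not from mountain pass geometry of $\tilde I$ on a product sphere. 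If you want a correct statement in the spirit of (i), replace the hypothesis $\|(s,u)\|=\rho$ by ``$\|u\|_{\gamma}=\rho$ and $|s|\le\rho$'' (every admissible path must pass through such a point, since its $u$-component joins $0$ to $v$ with $\|v\|_{\gamma}>\rho$); with that modification your estimates do close the argument. As written, however, the concluding step of your part (i) fails.
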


From now on, we denote by $\tilde{d}$, the mountain pass level associated with $\tilde{I}$ given by 
\begin{equation*}
\tilde{d}:=\inf_{\tilde{g}\in\Gamma}\max_{t\in [0,1]}\tilde{I}(\tilde{g}(t))\geq\alpha>0,
\end{equation*}
where 
\begin{equation*}
\Gamma=\{\tilde{g} \in C([0,1],\mathbb{R} \times \mathcal{H}_{\gamma,rad_{x,y}}^{1,2}(\mathbb{R}^{N})\,:\,\tilde{g}(0)=0 ~ \hbox{and} ~ \tilde{g}(1)=(0,v) \}.
\end{equation*}

From definition of $I$ and $\tilde{I}$, we derive the equality $d=\tilde{d}$, which permits to repeat the same arguments explored in the proof of \cite[Propostion 4.2]{JNK} ( see also  \cite[Propostion 2.2]{Jeanjean}) to show the following proposition:

\begin{prop} \label{P1} There is a sequence $\{(s_n,u_n)\} \subset \mathbb{R} \times \mathcal{H}_{\gamma,rad_{x,y}}^{1,2}(\mathbb{R}^{N})$ such that
\begin{itemize}
\item[(a)] $s_n \to 0;$ 	
\item[(b)] $\tilde{I}(s_n,u_n) \to \tilde{d}; $
\item[(c)] $\frac{\partial}{\partial u}\tilde{I}(s_n,u_n)\to 0$ strongly in $(\mathcal{H}_{\gamma,rad_{x,y}}^{1,2}(\mathbb{R}^{N}))^*$;
\item[(d)] $\frac{\partial}{\partial s}\tilde{I}(s_n,u_n)\to 0$ in $\mathbb{R}$.
\end{itemize}		
	
\end{prop}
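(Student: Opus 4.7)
The plan is to follow the scheme introduced by Jeanjean \cite{Jeanjean} (and adapted in Hirata--Ikoma--Tanaka \cite{JNK}): the non-standard ingredient is not a clever direct construction, but the combination of (i) the equality $d=\tilde d$ of the two mountain-pass levels, and (ii) a careful application of Ekeland's variational principle on the space of paths equipped with the $C^{0}$ metric, starting from a minimizing sequence whose first coordinate is identically zero. The conclusion (a), that $s_{n}\to 0$, will be a free byproduct of this specific choice of minimizing sequence.

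First I would verify that $\tilde d = d$. Given any path $g\in\Gamma_{I}$, the path $\tilde g(t):=(0,g(t))$ lies in $\Gamma$ with $\tilde I(\tilde g(t))=I(g(t))$, so $\tilde d\le d$. Conversely, for any $\tilde g(t)=(s(t),u(t))\in\Gamma$, the reparametrized path $g(t)(z):=u(t)\bigl(e^{-s(t)}x,e^{-(\gamma+1)s(t)}y\bigr)$ belongs to $\Gamma_{I}$ (since $s(0)=s(1)=0$ forces $g(0)=0$ and $g(1)=v$) and satisfies $I(g(t))=\tilde I(\tilde g(t))$ by the scaling that defines $\tilde I$; hence $d\le\tilde d$. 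This scaling identity is the point where $0\le\gamma<1$ plays no role: it is just a change of variables in the three terms of $\tilde I(s,u)$.

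Next I would apply Ekeland's variational principle to the functional $\Phi(\tilde g):=\max_{t\in[0,1]}\tilde I(\tilde g(t))$ on the complete metric space $(\Gamma,\mathrm{dist})$, where $\mathrm{dist}(\tilde g_{1},\tilde g_{2})=\max_{t}\|\tilde g_{1}(t)-\tilde g_{2}(t)\|_{\mathbb R\times\mathcal H_{\gamma,rad_{x,y}}^{1,2}}$. Starting from $g_{n}\in\Gamma_{I}$ with $\max_{t}I(g_{n}(t))\le d+1/n^{2}$, the paths $\tilde g_{n}(t):=(0,g_{n}(t))\in\Gamma$ form a minimizing sequence for $\Phi$. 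Ekeland then yields $\tilde h_{n}\in\Gamma$ with $\mathrm{dist}(\tilde h_{n},\tilde g_{n})\le 1/n$, $\Phi(\tilde h_{n})\le\Phi(\tilde g_{n})$, and a quantitative slope estimate. A standard deformation/minimax argument (exactly as in \cite[Prop. 2.2]{Jeanjean}) produces $t_{n}\in[0,1]$ such that $(s_{n},u_{n}):=\tilde h_{n}(t_{n})$ satisfies $\tilde I(s_{n},u_{n})\to\tilde d=d$ and $\|\tilde I'(s_{n},u_{n})\|_{(\mathbb R\times\mathcal H_{\gamma,rad_{x,y}}^{1,2})^{\ast}}\to 0$; splitting the derivative into its $s$ and $u$ components gives (b), (c), (d). Finally, since the first coordinate of $\tilde g_{n}$ is identically $0$, we have $|s_{n}|\le\mathrm{dist}(\tilde h_{n},\tilde g_{n})\le 1/n$, which yields (a).

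The main obstacle is not analytic but notational: one has to set up Ekeland's principle on $\Gamma$ correctly and track the fact that the slope of $\Phi$ at $\tilde h_{n}$ is realized (up to $o(1)$) by a point $\tilde h_{n}(t_{n})$ lying near the maximum of $\tilde I\circ\tilde h_{n}$; the deformation argument that turns this slope estimate into the gradient statement $\tilde I'(s_{n},u_{n})\to 0$ is completely parallel to Jeanjean's proof in the Euclidean $\gamma=0$ setting and uses nothing about the operator $\Delta_{\gamma}$ beyond the $C^{1}$-smoothness of $\tilde I$ on $\mathbb R\times\mathcal H_{\gamma,rad_{x,y}}^{1,2}(\mathbb R^{N})$, which follows from $(f_{1})$, $(f_{2})$ and the continuous embeddings in Section~2. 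For this reason I would simply refer to \cite[Proposition 2.2]{Jeanjean} and \cite[Proposition 4.2]{JNK} for the deformation step and concentrate the exposition on the identity $d=\tilde d$ and on the observation that a well-chosen minimizing sequence of paths forces $s_{n}\to 0$.
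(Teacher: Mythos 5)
Your proposal is correct and follows essentially the same route as the paper, which likewise establishes $d=\tilde d$ and then invokes the Ekeland-based minimax argument of Jeanjean \cite[Proposition 2.2]{Jeanjean} and Hirata--Ikoma--Tanaka \cite[Proposition 4.2]{JNK}, with $s_n\to 0$ coming from the distance to paths of the form $(0,g_n(t))$. In fact you supply more detail (the two-sided comparison of levels via the scaling change of variables and the explicit Ekeland setup) than the paper, which simply cites these references.
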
	

The next lemma establishes the boundedness of the sequence $(u_n) \subset \mathcal{H}_{\gamma,rad_{x,y}}^{1,2}(\mathbb{R}^{N}) $ that was obtained in the last proposition.

\begin{lemma} \label{limitacao} The sequence $(u_n) \subset \mathcal{H}_{\gamma,rad_{x,y}}^{1,2}(\mathbb{R}^{N}) $ is bounded. 
	
\end{lemma}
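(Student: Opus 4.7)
The plan is to exploit the two informations carried by the augmented mountain pass sequence of Proposition \ref{P1}: the energy bound (b), and, crucially, the extra ``Pohozaev direction'' condition (d), $\partial_{s}\tilde{I}(s_{n},u_{n})\to 0$, which plays here the same role as the Pohozaev identity in Jeanjean's original scheme. Using the explicit expression of $\tilde{I}(s,u)$ given just before Proposition \ref{P1}, a direct differentiation yields
\begin{equation*}
\frac{\partial \tilde{I}}{\partial s}(s,u)=\frac{(N_{\gamma}-2)e^{(N_{\gamma}-2)s}}{2}\|u\|_{\mathcal{D}_{\gamma}^{1,2}}^{2}+\frac{N_{\gamma}e^{N_{\gamma}s}}{2}|u|_{2}^{2}-N_{\gamma}e^{N_{\gamma}s}\int_{\mathbb{R}^{N}}F(u)\,dz.
\end{equation*}

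In the first step I would evaluate this at $(s_{n},u_{n})$ and use $s_{n}\to 0$ together with (d) to obtain the asymptotic identity
\begin{equation*}
\tfrac{N_{\gamma}-2}{2}\|u_{n}\|_{\mathcal{D}_{\gamma}^{1,2}}^{2}+\tfrac{N_{\gamma}}{2}|u_{n}|_{2}^{2}-N_{\gamma}\int_{\mathbb{R}^{N}}F(u_{n})\,dz=o(1).
\end{equation*}
Coupling this with $\tilde{I}(s_{n},u_{n})=\tfrac{1}{2}\|u_{n}\|_{\mathcal{D}_{\gamma}^{1,2}}^{2}+\tfrac{1}{2}|u_{n}|_{2}^{2}-\int F(u_{n})\,dz+o(1)=\tilde{d}+o(1)$ from (b), I can eliminate the $F$--term (subtract $N_{\gamma}$ times the energy identity from the Pohozaev identity) to conclude
\begin{equation*}
\|u_{n}\|_{\mathcal{D}_{\gamma}^{1,2}}^{2}=N_{\gamma}\tilde{d}+o(1),
\end{equation*}
which already bounds the Dirichlet part of the norm, and, via \eqref{Sobolev-inequality}, the norm $|u_{n}|_{2^{*}_{\gamma}}$.

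The main obstacle is that the previous step does not by itself control the $L^{2}$--mass $|u_{n}|_{2}^{2}$, which enters $\|u_{n}\|_{\gamma}^{2}$. To close the argument I would return to the energy identity and estimate $\int F(u_{n})\,dz$ using $(f_1)$--$(f_2)$, which give $|F(t)|\leq \tfrac{1}{4}t^{2}+C|t|^{q}$ with $q\in(2,2^{*}_{\gamma})$. This produces
\begin{equation*}
\tfrac{1}{4}|u_{n}|_{2}^{2}\leq C|u_{n}|_{q}^{q}+O(1).
\end{equation*}
Interpolating $|u_{n}|_{q}\leq |u_{n}|_{2}^{1-\theta}|u_{n}|_{2^{*}_{\gamma}}^{\theta}$ with the standard exponent $\theta$ defined by $\tfrac{1}{q}=\tfrac{1-\theta}{2}+\tfrac{\theta}{2^{*}_{\gamma}}$, one computes $(1-\theta)q=\tfrac{2(2^{*}_{\gamma}-q)}{2^{*}_{\gamma}-2}<2$, so $(1-\theta)q/2<1$.

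Combining the previous inequality with the interpolation estimate and the already established bounds on $\|u_{n}\|_{\mathcal{D}_{\gamma}^{1,2}}$ and $|u_{n}|_{2^{*}_{\gamma}}$, I get an inequality of the form $|u_{n}|_{2}^{2}\leq C_{1}+C_{2}\,|u_{n}|_{2}^{(1-\theta)q}$ with a sub-linear exponent in $|u_{n}|_{2}^{2}$, from which $|u_{n}|_{2}$ must be bounded. Summing with the Dirichlet bound gives $\|u_{n}\|_{\gamma}$ bounded, as desired. The delicate point in the whole argument is really the first one: the $\partial_{s}$--information from (d) must be interpreted as an asymptotic Pohozaev identity adapted to the anisotropic scaling $(x,y)\mapsto(e^{-s}x,e^{-(\gamma+1)s}y)$ native to the Grushin geometry, and the hypothesis $s_{n}\to 0$ from (a) is essential for the exponential prefactors to disappear in the limit.
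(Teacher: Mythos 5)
Your argument is correct, and its first half coincides with the paper's: combining the level information (b) with the Pohozaev-direction condition (d) and eliminating the terms $\tfrac12|u_n|_2^2-\int_{\mathbb{R}^N} F(u_n)\,dz$, whose prefactors match, pins down $\|u_n\|^2_{\mathcal{D}_{\gamma}^{1,2}(\mathbb{R}^N)}$ and hence, via \eqref{Sobolev-inequality}, bounds $|u_n|_{2^*_{\gamma}}$. One caution: perform the elimination \emph{before} discarding the exponentials. Passing directly from (d) and $s_n\to0$ to $\tfrac{N_\gamma-2}{2}\|u_n\|^2_{\mathcal{D}_{\gamma}^{1,2}}+\tfrac{N_\gamma}{2}|u_n|_2^2-N_\gamma\int F(u_n)\,dz=o(1)$ is circular, since $(e^{(N_\gamma-2)s_n}-1)\|u_n\|^2_{\mathcal{D}_{\gamma}^{1,2}}=o(1)$ only once boundedness is known; but in $N_\gamma\tilde I(s_n,u_n)-\partial_s\tilde I(s_n,u_n)$ the mass and $F$ terms cancel exactly with their prefactors, giving $e^{(N_\gamma-2)s_n}\|u_n\|^2_{\mathcal{D}_{\gamma}^{1,2}}=N_\gamma\tilde d+o(1)$, so your conclusion stands. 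Where you genuinely depart from the paper is the control of $|u_n|_2$: the paper uses item (c) paired with $u_n$ (the third identity in its system), the bound $|f(t)t|\le\tfrac14 t^2+C|t|^{2^*_{\gamma}}$ and \eqref{Sobolev-inequality}; you instead stay with the energy identity, estimate $|F(t)|\le\tfrac14 t^2+C|t|^{q}$ with the subcritical $q$ of $(f_2)$, and interpolate $L^{q}$ between $L^{2}$ and $L^{2^*_{\gamma}}$ to get $|u_n|_2^2\le C_1+C_2|u_n|_2^{(1-\theta)q}$ with $(1-\theta)q=\tfrac{2(2^*_{\gamma}-q)}{2^*_{\gamma}-2}<2$, whence boundedness. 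Both routes work; yours avoids the pairing $\partial_u\tilde I(s_n,u_n)u_n$, which strictly speaking is only $o_n(1)\|u_n\|_{\gamma}$ (harmless in the paper, but worth noticing), while the paper's version is shorter and only uses the at-most-critical growth of $f$.
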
	 
\begin{proof} From Proposition \ref{P1}, 
$$
\left\{ 
\begin{array}{l}
\frac{e^{(m+(\gamma+1)-2)s_n}}{2}\|u_n\|_{\mathcal{D}_{\gamma}^{1,2}\big(\mathbb{R}^{N}\big)}^{2}+\frac{e^{(m+(\gamma+1)k)s_n}}{2}|u_n|_{2}^{2}-e^{(m+(\gamma+1)k)s_n}\displaystyle \int_{\mathbb{R}^N}F(u_n)\,dz=\tilde{d}+o_n(1) \\
\mbox{} \\
\frac{(m+(\gamma+1)-2)e^{(m+(\gamma+1)-2)s_n}}{2}\|u_n\|_{\mathcal{D}_{\gamma}^{1,2}\big(\mathbb{R}^{N}\big)}^{2}+\frac{(m+(\gamma+1)k)e^{(m+(\gamma+1)k)s_n}}{2}|u_n|_{2}^{2}-\\ 
\mbox{}\\
\hspace{4 cm} -(m+(\gamma+1)k)e^{(m+(\gamma+1)k)s_n}\displaystyle\int_{\mathbb{R}^N}F(u_n)\,dz=o_n(1) \\
\mbox{}\\
{e^{(m+(\gamma+1)-2)s_n}}\|u_n\|_{\mathcal{D}_{\gamma}^{1,2}\big(\mathbb{R}^{N}\big)}^{2}+{e^{(m+(\gamma+1)k)s_n}}|u_n|_{2}^{2}-e^{(m+(\gamma+1)k)s_n}\displaystyle \int_{\mathbb{R}^N}f(u_n)u_n\,dz=o_n(1).
\end{array}
\right.
$$

The first and second equation of the previous system ensure that $(\|u_n\|_{\mathcal{D}_{\gamma}^{1,2}\big(\mathbb{R}^{N}\big)})$ is bounded. From $(f_1)-(f_2)$, there is $C>0$ such that
$$
|f(t)t| \leq \frac{1}{4}|t|^{2}+C|t|^{2^*}, \quad \forall t \in \mathbb{R}.
$$
Using this information in the last equation of the above system, we get
$$
|u_n|_{2}^{2} \leq C\int_{\mathbb{R}^N}|u_n|^{2^*}\,dz+o_n(1), \quad \forall n \in \mathbb{N}.
$$
Now, employing (\ref{Sobolev-inequality}), we find
$$
|u_n|_{2}^{2} \leq C\|u_n\|_{\mathcal{D}_{\gamma}^{1,2}\big(\mathbb{R}^{N}\big)}^{2}+o_n(1), \quad \forall n \in \mathbb{N},
$$
from where it follows that $(u_n)$ is a bounded sequence in $\mathcal{H}_{\gamma,rad_{x,y}}^{1,2}(\mathbb{R}^{N}) $.	
\end{proof}

\subsection{Proof of Theorem \ref{Theorem1}}
By Proposition \ref{P1}, there is $\{(s_n,u_n)\} \subset \mathbb{R} \times \mathcal{H}_{\gamma,rad_{x,y}}^{1,2}(\mathbb{R}^{N})$  satisfying
\begin{itemize}
	\item[(a)] $s_n \to 0;$ 	
	\item[(b)] $\tilde{I}(s_n,u_n) \to \tilde{d}; $
	\item[(c)] $\frac{\partial}{\partial u}\tilde{I}(s_n,u_n)\to 0$ strongly in $(\mathcal{H}_{\gamma,rad_{x,y}}^{1,2}(\mathbb{R}^{N}))^*$;
	\item[(d)] $\frac{\partial}{\partial s}\tilde{I}(s_n,u_n)\to 0$ in $\mathbb{R}$.
\end{itemize}	
By Lemma \ref{limitacao}, the sequence $(u_n)$ is bounded in $\mathcal{H}_{\gamma,rad_{x,y}}^{1,2}(\mathbb{R}^{N})$, then for some subsequence, still denoted by itself, there is $u_0 \in \mathcal{H}_{\gamma,rad_{x,y}}^{1,2}(\mathbb{R}^{N})$ such that
$$
u_n \rightharpoonup u_0 \quad \mbox{in} \quad \mathcal{H}_{\gamma,rad_{x,y}}^{1,2}(\mathbb{R}^{N}).
$$
The item $(d)$ implies that $\frac{\partial}{\partial u}\tilde{I}(s_n,u_n)w\to 0$ for all $w \in \mathcal{H}_{\gamma,rad_{x,y}}^{1,2}(\mathbb{R}^{N})$, that is, 
$$
e^{(m+(\gamma+1)-2)s_n}\int_{\mathbb{R}^{N}}\nabla_{\gamma} u_n\nabla_{\gamma} w\,dz+{e^{(m+(\gamma+1)k)s_n}}\int_{\mathbb{R}^N}u_nw\,dz-e^{(m+(\gamma+1)k)s_n}\displaystyle \int_{\mathbb{R}^N}f(u_n)w\,dz=o_n(1).
$$
Since $s_n \to 0$, it is easy to check that the above equality leads to
$$
\int_{\mathbb{R}^{N}}\nabla_{\gamma} u_0\nabla_{\gamma} w\,dz+\int_{\mathbb{R}^N}u_0w\,dz-\displaystyle \int_{\mathbb{R}^N}f(u_0)w\,dz=0, \quad \forall w \in \mathcal{H}_{\gamma,rad_{x,y}}^{1,2}(\mathbb{R}^{N}),
$$
showing that $I'(u_0)w=0$ for all $w \in \mathcal{H}_{\gamma,rad_{x,y}}^{1,2}(\mathbb{R}^{N})$, that is, $u_0$ is a critical point of $I$. We claim that $u_0 \not= 0$, because if $u_0=0$, the Lemma \ref{embedding-compact0} gives that 
\begin{equation} \label{limitOD}
\int_{\mathbb{R}^N}|u_n|^{p}\,dz \to 0, \quad \forall p \in (2,2^{*}_{\gamma}).
\end{equation}
From $(f_1)-(f_2)$, given $\epsilon>0$, there is $C>0$  such that
$$
\int_{\mathbb{R}^N}|f(u_n)u_n| \,dz\leq \epsilon \int_{\mathbb{R}^N}|u_n|^{2}\,dz+C\int_{\mathbb{R}^N}|u_n|^{p}\,dz.
$$ 
As $(u_n)$ is bounded in $\mathcal{H}_{\gamma,rad_{x,y}}^{1,2}\big(\mathbb{R}^{N}\big)$, it follows that
$$
\int_{\mathbb{R}^N}|f(u_n)u_n| \,dz\leq \epsilon M+C\int_{\mathbb{R}^N}|u_n|^{p}\,dz, 
$$ 
where $M=\displaystyle \sup_{n \in \mathbb{N}} \int_{\mathbb{R}^N}|u_n|^{2}\,dz$. Now, using the limit (\ref{limitOD}), we derive that
$$
\limsup_{n \to +\infty} \int_{\mathbb{R}^N}|f(u_n)u_n|\,dz \leq \epsilon M.
$$
As $\epsilon$ is arbitrary, the last limit yields 
$$
\lim_{n \to +\infty} \int_{\mathbb{R}^N}f(u_n)u_n\,dz=0.
$$
This limit combined together with the limit $\frac{\partial}{\partial u}\tilde{I}(s_n,u_n)u_n=o_n(1)$ allows to deduce that $u_n \to 0$ in $\mathcal{H}_{\gamma,rad_{x,y}}^{1,2}\big(\mathbb{R}^{N}\big)$. Hence, 
$$
\tilde{I}(s_n,u_n) \to 0, 
$$
which is absurd, because $\tilde{I}(s_n,u_n) \to \tilde{d}>0$. Thus, $u_0$ is a nontrivial critical point of $I$ in $\mathcal{H}_{\gamma,rad_{x,y}}^{1,2}\big(\mathbb{R}^{N}\big)$.

Next, we will show that $u_0$ is in fact a critical point of $I$ in the space $\mathcal{H}_{\gamma}^{1,2}\big(\mathbb{R}^{N}\big)$. To see why, we will apply the {\it Principle of Symmetric Criticality} \cite{Palais1979MP} ( see also \cite[Theorem 1.28]{Willem1996} ). In what follows, let us denote by $G$ the following subgroup of $O(N)$:
\begin{equation*}
G=\left\{g\in O(N)\;:\;g(x,y)=(h(x),l(y)) ~ \hbox{with} ~ h\in {O}(m) \quad \mbox{and} \quad l \in {O}(k) \right\}
\end{equation*}
and the action $G\times \mathcal{H}_{\gamma}^{1,2}\big(\mathbb{R}^{N}\big) \longrightarrow \mathcal{H}_{\gamma}^{1,2}\big(\mathbb{R}^{N}\big) $ is given by
\begin{equation*}
(gu)(x,y)=u(h(x),l(y)) ~ \hbox{for all } ~ (x,y)\in\mathbb{R}^{m}\times\mathbb{R}^{k},
\end{equation*}
which is isometric, that is, $\|gu\|_{\gamma}=\|u\|_{\gamma}$. Furthermore, the functional $I$ is invariant under the action of $G$, since for all $g\in G$ and $u \in \mathcal{H}_{\gamma}^{1,2}\big(\mathbb{R}^{N}\big)$,
$$
I(gu)  = \frac{1}{2}||gu||^2_{\gamma} -\int_{\mathbb{R}^{N}}F(gu) \,dz=\frac{1}{2}||u||^2_{\gamma} - \int_{\mathbb{R}^{N}}F(u) \,dz = I(u).
$$
From the above commentaries,  it is easy to check that 
$$
	Fix(G)  =  \{ u\in \mathcal{H}_{\gamma}^{1,2}\big(\mathbb{R}^{N}\big)\,:\, gu=u, \forall g\in G\}=\mathcal{H}_{\gamma,rad_{x,y}}^{1,2}\big(\mathbb{R}^{N}\big).
$$
Hence, by the Principle of Symmetric Criticality of Palais, $u_0$ is a nontrivial critical point of $I$ in $\mathcal{H}_{\gamma}^{1,2}(\mathbb{R}^{N})$, and consequently $u_0$ is the required nontrivial solution of $(P)_1$. Finally, we can see that $u_0$ is nonnegative, because it satisfies 
\begin{equation*}
\int_{\mathbb{R}^{N}}\big(\nabla_{\gamma} u_0\nabla_{\gamma}\varphi +u_0\varphi \big)\,dz =\int_{\mathbb{R}^{N}} f(u_0)\varphi \,dz, \quad \varphi \in \mathcal{H}_{\gamma}^{1,2}\big(\mathbb{R}^{N}\big).
\end{equation*}
Then,  choosing the test function $\varphi=u_0^{-}:=\min\{u,0\}$, we find that $||u_0^{-}||_{\gamma}=0$. This proves that $u_0$ is nonnegative.
\\

\section{Proof of Theorem \ref{Theorem1'}}

In what follows, as in the previous section, without loss of generality, we assume that 
$$
f(s)=0, \quad \forall s \leq 0.
$$

The energy functional associated with problem $(P)_0$ is the functional  $J:\mathcal{D}_{\gamma}^{1,2}\big(\mathbb{R}^{N}\big)\longrightarrow\mathbb{R}$ given by 
\begin{equation*}
J(u)=\frac{1}{2}\int_{\mathbb{R}^{N}}|\nabla_{\gamma}u|^2  \, dz - \int_{\mathbb{R}^{N}} F(u)\, dz.
\end{equation*}
The reader is invited to see that weak solutions of problem $(P)_0$ are critical points of $I$. 

In order to overcome the loss of compactness involving the space $\mathcal{D}_{\gamma}^{1,2}\big(\mathbb{R}^{N}\big)$, we will restrict to functional $J$ to the space $\mathcal{D}_{\gamma,rad_{x,y}}^{1,2}\big(\mathbb{R}^{N}\big)$. In this section, $\|\,\,\|$ denotes the norm $\|\,\,\,\|_{\mathcal{D}_{\gamma}^{1,2}\big(\mathbb{R}^{N}\big)}$.

Our first lemma shows that $J$ verifies the mountain pass geometry. However, we will omit its proof, because the arguments are similar those used in the proof of Lemma \ref{MP-geometry}. 
\begin{lemma}\label{MP-geometryD} There exist positive numbers $\rho$ and $\alpha$ such that:
	\begin{description}
		\item[i)] $J(u)\geq\alpha$ if  $||u||=\rho$.
		\item[ii)] There exists $v\in \mathcal{D}_{\gamma,rad_{x,y}}^{1,2}\big(\mathbb{R}^{N}\big)$ such that $\|v\|>\rho$ and $J(v)< 0$.
	\end{description}
\end{lemma}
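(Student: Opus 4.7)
The plan is to mirror the argument of Lemma \ref{MP-geometry}, but now within the critical-only framework of $\mathcal{D}_{\gamma}^{1,2}(\mathbb{R}^{N})$. Since in this section only the embedding \eqref{S} into $L^{2^{*}_{\gamma}}(\mathbb{R}^{N})$ is available, every estimate has to be expressed in terms of the single critical Sobolev norm, which forces a more delicate pointwise control on $F$.

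For item (i), the key step is the global estimate $|F(s)|\leq C|s|^{2^{*}_{\gamma}}$ for all $s\in\mathbb{R}$, derived from $(f_4)$ and $(f_5)$ by splitting $\mathbb{R}$ into three pieces. Near zero (on $|s|\leq\delta$, WLOG $\delta\leq 1$), hypothesis $(f_4)$ together with $q>2^{*}_{\gamma}$ gives $|F(s)|\leq \frac{c}{q}|s|^{q}\leq \frac{c}{q}\delta^{q-2^{*}_{\gamma}}|s|^{2^{*}_{\gamma}}$. For large $|s|\geq M$, $(f_5)$ allows to take $M$ so that $|f(t)|\leq \varepsilon|t|^{2^{*}_{\gamma}-1}$ for $|t|\geq M$, whence integration yields $|F(s)|\leq\bigl(|F(M)|/M^{2^{*}_{\gamma}}+\varepsilon/2^{*}_{\gamma}\bigr)|s|^{2^{*}_{\gamma}}$. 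On the compact middle range $[\delta,M]$ continuity of $F$ gives $|F(s)|\leq K\leq (K/\delta^{2^{*}_{\gamma}})|s|^{2^{*}_{\gamma}}$. Combining these with \eqref{S},
\begin{equation*}
J(u)\geq \frac{1}{2}\|u\|^{2}-C\int_{\mathbb{R}^{N}}|u|^{2^{*}_{\gamma}}\,dz\geq \frac{1}{2}\|u\|^{2}-C\mathcal{S}^{2^{*}_{\gamma}}\|u\|^{2^{*}_{\gamma}},
\end{equation*}
and because $2^{*}_{\gamma}>2$, a sufficiently small choice of $\rho=\|u\|$ produces a strictly positive lower bound $\alpha$.

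For item (ii), I would use $(f_6)$ to build a radially symmetric cut-off $\varphi\in C_{0}^{\infty}(\mathbb{R}^{N})\cap\mathcal{D}_{\gamma,rad_{x,y}}^{1,2}(\mathbb{R}^{N})$ with $0\leq\varphi\leq s_{0}$, $\varphi\equiv s_{0}$ on $B_{R}(0)$ and support in $B_{R+1}(0)$. Using continuity of $F$ on $[0,s_{0}]$,
\begin{equation*}
\int_{\mathbb{R}^{N}}F(\varphi)\,dz\geq F(s_{0})|B_{R}|-K\bigl|B_{R+1}\setminus B_{R}\bigr|\sim c_{1}R^{N}-c_{2}R^{N-1},
\end{equation*}
which is strictly positive for $R$ large enough; fix such a $\varphi$. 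Setting $\varphi_{t}(x,y)=\varphi(x/t,y/t)$ exactly as in Lemma \ref{MP-geometry},
\begin{equation*}
J(\varphi_{t})=\frac{t^{N-2}}{2}\int_{\mathbb{R}^{N}}|\nabla_{x}\varphi|^{2}\,dz+\frac{t^{N+2\gamma-2}}{2}\int_{\mathbb{R}^{N}}|x|^{2\gamma}|\nabla_{y}\varphi|^{2}\,dz-t^{N}\int_{\mathbb{R}^{N}}F(\varphi)\,dz.
\end{equation*}
The hypothesis $0\leq\gamma<1$ ensures both $N-2<N$ and $N+2\gamma-2<N$, so the negative $t^{N}$-term dominates and $J(\varphi_{t})\to -\infty$ as $t\to +\infty$; simultaneously $\|\varphi_{t}\|\to\infty$ since $N\geq 3$ forces $N-2\geq 1$. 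Taking $v=\varphi_{t}$ for $t$ sufficiently large delivers a function as required.

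I expect the main obstacle to be item (i). Contrary to the positive-mass case, there is no $L^{2}$ piece inside $\|\cdot\|$ that could absorb a linear part of $F$, so the whole integral $\int F(u)\,dz$ must be swallowed by the single critical Sobolev term. The condition $q>2^{*}_{\gamma}$ in $(f_4)$ is precisely what enables this absorption near the origin, with $(f_5)$ playing the analogous role at infinity; the compact middle range is then harmless once one realizes the continuity bound can be rescaled into the critical one. Item (ii), by contrast, is a routine adaptation of the Grushin scaling already employed in Lemma \ref{MP-geometry}, and the hypothesis $\gamma<1$ once more guarantees that the kinetic terms grow strictly slower than the potential term along the chosen dilation.
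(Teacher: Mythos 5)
Your proposal is correct and is precisely the adaptation of Lemma \ref{MP-geometry} that the paper alludes to when it omits this proof: the only genuinely new ingredient, the global bound $|F(s)|\leq C|s|^{2^{*}_{\gamma}}$ obtained by splitting $\mathbb{R}$ via $(f_4)$ near the origin (where $q>2^{*}_{\gamma}$ is essential), $(f_5)$ at infinity, and continuity on the compact middle range, is exactly the right replacement for the $(f_1)$--$(f_2)$ estimate used in the positive mass case, and your item (ii) reproduces the paper's dilation argument verbatim.
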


The last lemma permits to conclude that the mountain pass level given by 
\begin{equation*}
d_0:=\inf_{g\in\Gamma}\max_{t\in [0,1]}J(g(t))\geq\alpha>0,
\end{equation*}
where 
\begin{equation*}
\Gamma=\{g\in C([0,1],\mathcal{D}_{\gamma,rad_{x,y}}^{1,2}\big(\mathbb{R}^{N}\big):g(0)=0 ~ \hbox{and} ~ g(1)=v \}
\end{equation*}
is well defined.

Arguing as in  Section 3, we will consider an auxiliary functional $\tilde{J}:\mathbb{R} \times \mathcal{D}_{\gamma,rad_{x,y}}^{1,2}\big(\mathbb{R}^{N}\big) \to \mathbb{R}$ given by 
$$
\tilde{J}(s,u(z))=J(u(e^{-s}x,e^{-(\gamma+1)s}y)), \quad z=(x,y) \in \mathbb{R}^N=\mathbb{R}^m \times \mathbb{R}^k,
$$
that is,
$$
\tilde{J}(s,u)= \frac{e^{(m+(\gamma+1)-2)s}}{2}\|u\|^{2}-e^{(m+(\gamma+1)k)s}\int_{\mathbb{R}^N}F(u)\,dz.
$$

Is is easy to prove that $\tilde{J}$ also satisfies the mountain pass geometry, more precisely, we have the result below 

\begin{lemma}\label{MP-geometry2D} There exist positive numbers $\rho$ and $\alpha$ such that:
	\begin{description}
		\item[i)] $\tilde{J}(s,u)\geq\alpha$ if  $\|(s,u)\|_{\mathbb{R} \times \mathcal{D}_{\gamma}^{1,2}\big(\mathbb{R}^{N}\big)}=\rho$.
		\item[ii)] There exists $v\in \mathcal{D}_{\gamma}^{1,2}\big(\mathbb{R}^{N}\big)$ such that $\|(0,v)\|_{\mathbb{R} \times \mathcal{D}_{\gamma}^{1,2}\big(\mathbb{R}^{N}\big)}=\|v\|>\rho$ and $\tilde{J}(0,v)=J(v)< 0$.
	\end{description}
\end{lemma}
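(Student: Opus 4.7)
The plan is to directly adapt the argument of Lemma \ref{MP-geometry2} from Section 3 to the zero-mass setting, leveraging the mountain pass geometry of $J$ established in Lemma \ref{MP-geometryD}. Part (ii) is essentially free: taking $v \in \mathcal{D}_{\gamma,rad_{x,y}}^{1,2}(\mathbb{R}^{N})$ as supplied by Lemma \ref{MP-geometryD}(ii), one has $\tilde{J}(0,v) = J(v) < 0$ directly from the definition of $\tilde{J}$, and $\|(0,v)\|_{\mathbb{R}\times\mathcal{D}_{\gamma}^{1,2}(\mathbb{R}^N)} = \|v\| > \rho$, which is exactly what is required.

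For Part (i), the key step is a uniform pointwise bound of the form $|F(s)| \leq C|s|^{2^{*}_{\gamma}}$ valid for all $s \in \mathbb{R}$, and this is where $(f_4)$ and $(f_5)$ jointly do their work. On $|s| \leq \delta$ (WLOG $\delta \leq 1$), hypothesis $(f_4)$ combined with $q > 2^{*}_{\gamma}$ yields $|F(s)| \leq \tfrac{c}{q}\delta^{q-2^{*}_{\gamma}}|s|^{2^{*}_{\gamma}}$; for $|s| \geq M$ with $M$ sufficiently large, $(f_5)$ gives $|F(s)| \leq |s|^{2^{*}_{\gamma}}$; and on the compact intermediate range $\delta \leq |s| \leq M$, continuity of $F$ together with the positive lower bound on $|s|$ deliver $|F(s)| \leq C|s|^{2^{*}_{\gamma}}$. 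Integrating and invoking the continuous embedding \eqref{S} then gives $\int_{\mathbb{R}^N} F(u)\,dz \leq C_0 \|u\|^{2^{*}_{\gamma}}$.

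Next I would work on the sphere $\|(s,u)\|_{\mathbb{R}\times\mathcal{D}_{\gamma}^{1,2}(\mathbb{R}^N)} = \rho$ with $\rho$ small, so that $|s| \leq \rho$ and the two exponential factors in the definition of $\tilde{J}$ are pinched between positive constants arbitrarily close to $1$. Combining with the preceding estimate on $\int F(u)$ produces
\begin{equation*}
\tilde{J}(s,u) \geq C_1 \|u\|^2 - C_2 \|u\|^{2^{*}_{\gamma}},
\end{equation*}
from which the desired lower bound by a positive constant $\alpha$ follows by the usual superquadratic argument, since $2^{*}_{\gamma} > 2$. The main obstacle I anticipate is coupling the $s$-dependent exponential prefactors with the $u$-dependent norm estimates; this is benign because $\rho$ can be chosen as small as one wishes, forcing the exponentials into any prescribed neighborhood of $1$ and reducing the analysis to the $s=0$ situation already handled in Lemma \ref{MP-geometryD}.
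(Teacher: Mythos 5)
Your estimate $|F(s)|\le C|s|^{2^{*}_{\gamma}}$ on all of $\mathbb{R}$, obtained by splitting into $|s|\le\delta$ (via $(f_4)$ and $q>2^{*}_{\gamma}$), $|s|\ge M$ (via $(f_5)$) and the compact intermediate range, is exactly the right zero-mass replacement for the $(f_1)$--$(f_2)$ estimate of Section~3, and together with \eqref{S} it gives $\int_{\mathbb{R}^N}F(u)\,dz\le C_0\|u\|^{2^{*}_{\gamma}}$. Part (ii) is also fine. The paper offers no proof of this lemma (it is declared easy, by analogy with Lemmas \ref{MP-geometry} and \ref{MP-geometryD}), so there is no authorial route to compare against; your reconstruction is the natural one.

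The gap is in your final step. On the product sphere $|s|^2+\|u\|^2=\rho^2$ the quantity $\|u\|$ ranges over all of $[0,\rho]$, so the inequality $\tilde J(s,u)\ge C_1\|u\|^2-C_2\|u\|^{2^{*}_{\gamma}}$ does not yield a uniform positive lower bound: at $(s,u)=(\pm\rho,0)$ one has $\tilde J(s,0)=0$ and the right-hand side vanishes. The ``usual superquadratic argument'' requires $\|u\|$ pinned at the value $\rho$, not merely bounded above by $\rho$. So item (i) as literally stated (this defect is inherited from the paper's own formulation) fails at those points; what your computation actually delivers is $\tilde J(s,u)\ge\alpha>0$ on the set $\{|s|\le\rho,\ \|u\|=\rho\}$, and what the minimax argument downstream really needs is only $\tilde d_0=d_0\ge\alpha$, which follows from Lemma \ref{MP-geometryD}(i) by projecting a path $\tilde g=(s(\cdot),u(\cdot))$ onto the path $t\mapsto u(t)\big(e^{-s(t)}x,e^{-(1+\gamma)s(t)}y\big)$ in $\Gamma$ and using the identity $\tilde J(\tilde g(t))=J$ of the projected path. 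You should either restate (i) in that form or supply this projection argument; the remainder of your proof then stands.
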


From now on, we denote by $\tilde{d}_0$, the mountain pass level associated with $\tilde{J}$ given by 
\begin{equation*}
\tilde{d}_0:=\inf_{\tilde{g}\in\Gamma}\max_{t\in [0,1]}\tilde{J}(\tilde{g}(t))\geq\alpha>0,
\end{equation*}
where 
\begin{equation*}
\Gamma=\{\tilde{g} \in C([0,1],\mathbb{R} \times \mathcal{D}_{\gamma,rad_{x,y}}^{1,2}\big(\mathbb{R}^{N}\big):\tilde{g}(0)=0 ~ \hbox{and} ~ \tilde{g}(1)=(0,v) \}.
\end{equation*}

From definition of $J$ and $\tilde{J}$, we also derive that equality $d_0=\tilde{d}_0$, which permits to repeat the same arguments explored in the proof of \cite[Propostion 4.2]{JNK} ( see \cite[Propostion 2.2]{Jeanjean}) to get the following proposition

\begin{prop} \label{P1D} There is a sequence $\{(s_n,u_n)\} \subset \mathbb{R} \times \mathcal{D}_{\gamma,rad_{x,y}}^{1,2}\big(\mathbb{R}^{N}\big)$ such that
	\begin{itemize}
		\item[(a)] $s_n \to 0;$ 	
		\item[(b)] $\tilde{J}(s_n,u_n) \to \tilde{d}; $
		\item[(c)] $\frac{\partial}{\partial u}\tilde{J}(s_n,u_n)\to 0$ strongly in $(\mathcal{D}_{\gamma,rad_{x,y}}^{1,2}\big(\mathbb{R}^{N}\big))^*$;
		\item[(d)] $\frac{\partial}{\partial s}\tilde{J}(s_n,u_n)\to 0$ in $\mathbb{R}$.
	\end{itemize}		
	
\end{prop}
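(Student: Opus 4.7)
The plan is to follow the now-standard Jeanjean-type minimax scheme on a product space, exactly as done in \cite[Proposition 2.2]{Jeanjean} and \cite[Proposition 4.2]{JNK}, and just verify that the degenerate-operator setting does not spoil any step. The key is Ekeland's variational principle applied to $\tilde J$ on the product space $\mathbb{R}\times\mathcal{D}_{\gamma,rad_{x,y}}^{1,2}(\mathbb{R}^{N})$ endowed with the norm $\|(s,u)\|^{2}=|s|^{2}+\|u\|^{2}$, using the mountain pass value $\tilde d_{0}$ as minimax level. The crucial preliminary observation is that the map $\Phi:C([0,1],\mathcal{D}_{\gamma,rad_{x,y}}^{1,2})\to C([0,1],\mathbb{R}\times\mathcal{D}_{\gamma,rad_{x,y}}^{1,2})$ given by $\Phi(g)(t)=(0,g(t))$ sends $\Gamma$ into $\widetilde\Gamma$ and satisfies $\max_{t}\tilde J(\Phi(g)(t))=\max_{t}J(g(t))$, so $\tilde d_{0}\le d_{0}$. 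The reverse inequality follows by noticing that to any path $\tilde g(t)=(s(t),w(t))\in\widetilde\Gamma$ one associates $g(t)(z):=w(t)(e^{-s(t)}x,e^{-(\gamma+1)s(t)}y)$, which belongs to $\Gamma$ because $s(0)=s(1)=0$ and radiality is preserved, and satisfies $J(g(t))=\tilde J(\tilde g(t))$. Hence $d_{0}=\tilde d_{0}$.

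Next, for each $n\in\mathbb{N}$ I would pick $g_{n}\in\Gamma$ with $\max_{t}J(g_{n}(t))\le d_{0}+\frac{1}{n^{2}}$ and lift it to $\tilde g_{n}=(0,g_{n})\in\widetilde\Gamma$, so that $\max_{t}\tilde J(\tilde g_{n}(t))\le \tilde d_{0}+\frac{1}{n^{2}}$. I then apply Ekeland's variational principle to the continuous functional $\Psi:\widetilde\Gamma\to\mathbb{R}$, $\Psi(\tilde g)=\max_{t}\tilde J(\tilde g(t))$, on the complete metric space $\widetilde\Gamma$ equipped with the uniform product metric; this yields a modified path $\tilde h_{n}\in\widetilde\Gamma$ whose maximum is attained at some $t_{n}\in(0,1)$ and such that, setting $(s_{n},u_{n})=\tilde h_{n}(t_{n})$, one has $\tilde J(s_{n},u_{n})\to \tilde d_{0}$ together with a quantitative control of the gradient of $\tilde J$ at $(s_{n},u_{n})$ in all admissible directions. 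The standard deformation argument (comparing $\tilde h_{n}$ with small perturbations in the two directions $s$ and $u$ separately) then produces simultaneously (a), (b), (c) and (d); property (a) in fact comes from the construction because perturbations of $\tilde h_{n}$ by a constant $s$-shift keep it in $\widetilde\Gamma$ only if the shift is controlled, forcing $s_{n}\to 0$.

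For the verification of (c) and (d) I would compute the two partial derivatives explicitly:
\[
\frac{\partial}{\partial u}\tilde J(s,u)\cdot w
=e^{(m+(\gamma+1)k-2)s}\!\!\int_{\mathbb{R}^{N}}\!\!\nabla_{\gamma}u\nabla_{\gamma}w\,dz
-e^{(m+(\gamma+1)k)s}\!\!\int_{\mathbb{R}^{N}}\!\! f(u)w\,dz,
\]
\[
\frac{\partial}{\partial s}\tilde J(s,u)=
\frac{(m+(\gamma+1)k-2)e^{(m+(\gamma+1)k-2)s}}{2}\|u\|^{2}
-(m+(\gamma+1)k)e^{(m+(\gamma+1)k)s}\!\!\int_{\mathbb{R}^{N}}\!\! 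F(u)\,dz,
\]
and check that the admissible variations produced by Ekeland (shifts in $s$ and in $u$ on the path $\tilde h_{n}$) give precisely the two smallness estimates claimed. The degenerate character of $\Delta_{\gamma}$ never enters this argument: all that is used is the $C^{1}$ regularity of $\tilde J$ on $\mathbb{R}\times\mathcal{D}_{\gamma,rad_{x,y}}^{1,2}(\mathbb{R}^{N})$, which follows from $(f_{4})$, $(f_{5})$ and the continuous embedding $\mathcal{D}_{\gamma}^{1,2}(\mathbb{R}^{N})\hookrightarrow L^{2^{*}_{\gamma}}(\mathbb{R}^{N})$ in (\ref{S}).

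The main obstacle, and the only place where care is required beyond a verbatim transcription of \cite{Jeanjean,JNK}, is the proof that the scaling $(x,y)\mapsto(e^{-s}x,e^{-(\gamma+1)s}y)$ actually defines a continuous action on $\mathcal{D}_{\gamma,rad_{x,y}}^{1,2}(\mathbb{R}^{N})$ with the prescribed effect on the two pieces of $\|u\|^{2}_{\mathcal{D}_{\gamma}^{1,2}}$ and on $\int F(u)$; once this is granted, the path-reparametrization identity $d_{0}=\tilde d_{0}$ and the continuity of $\Psi$ on $\widetilde\Gamma$ are immediate, and the rest of the proof is the usual Ekeland machinery.
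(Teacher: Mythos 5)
Your proposal is correct and follows essentially the same route as the paper: the paper itself reduces the proof to the observation that $d_0=\tilde d_0$ and then cites the Ekeland/minimax machinery of \cite[Proposition 4.2]{JNK} and \cite[Proposition 2.2]{Jeanjean}, which is precisely the argument you unpack. Note, in passing, that you correctly use the exponent $m+(\gamma+1)k-2$ in $\tilde J$ and its partial derivatives (the paper's displayed formula has a typographical slip, writing $m+(\gamma+1)-2$).
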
	

The next lemma establishes the boundedness of the sequence $(u_n) \subset \mathcal{D}_{\gamma,rad_{x,y}}^{1,2}\big(\mathbb{R}^{N}\big) $ that was obtained in the last proposition. Since the proof follows the same spirit of Lemma \ref{limitacao}, we also omit its proof.

\begin{lemma} \label{limitacaoD} The sequence $(u_n) \subset \mathcal{D}_{\gamma,rad_{x,y}}^{1,2}\big(\mathbb{R}^{N}\big) $ is bounded. 
	
\end{lemma}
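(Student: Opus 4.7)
The plan is to mimic the proof of Lemma \ref{limitacao}, with the observation that the $\mathcal{D}_{\gamma}^{1,2}$ setting is substantially simpler: the relevant norm involves only the gradient part, so there is no separate $L^2$ mass term to estimate. I would write out items (b) and (d) of Proposition \ref{P1D} explicitly and then use them to isolate $\|u_n\|$. Setting $N_{\gamma}=m+(1+\gamma)k$, $\alpha := N_\gamma - 2$ and $\beta := N_\gamma$, the functional takes the form
$$\tilde{J}(s,u) = \frac{e^{\alpha s}}{2}\|u\|^2 - e^{\beta s}\int_{\mathbb{R}^N} F(u)\,dz,$$
and items (b) and (d) of Proposition \ref{P1D} translate to
$$\frac{e^{\alpha s_n}}{2}\|u_n\|^2 - e^{\beta s_n}\int_{\mathbb{R}^N} F(u_n)\,dz = \tilde{d}_0 + o_n(1),$$
$$\frac{\alpha e^{\alpha s_n}}{2}\|u_n\|^2 - \beta e^{\beta s_n}\int_{\mathbb{R}^N} F(u_n)\,dz = o_n(1).$$

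The next step is to eliminate the nonlinear integral by forming the appropriate linear combination: multiplying the first identity by $\beta$ and subtracting the second yields
$$\frac{(\beta - \alpha) e^{\alpha s_n}}{2}\|u_n\|^2 = \beta \tilde{d}_0 + o_n(1).$$
Since $\beta - \alpha = 2$ and $s_n \to 0$ by Proposition \ref{P1D}(a), we have $e^{\alpha s_n} \to 1$, so passing to the limit gives $\|u_n\|^2 \to N_\gamma \tilde{d}_0$. In particular, $(u_n)$ is bounded in $\mathcal{D}_{\gamma, rad_{x,y}}^{1,2}(\mathbb{R}^N)$, which is the desired conclusion.

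I do not expect any real obstacle. Contrary to Lemma \ref{limitacao}, where one additionally had to bound $|u_n|_2$ using the growth assumptions $(f_1), (f_2)$ combined with the Sobolev inequality \eqref{Sobolev-inequality}, here the Pohozaev-type identity obtained from (b) and (d) is by itself enough to control the full norm of $u_n$, and item (c) of Proposition \ref{P1D} is not needed for this step.
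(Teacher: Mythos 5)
Your proof is correct and follows essentially the same route as the paper: the paper omits this proof precisely because it is the first step of Lemma \ref{limitacao} (the linear combination $N_\gamma\tilde{J}(s_n,u_n)-\partial_s\tilde{J}(s_n,u_n)$, which eliminates $\int F(u_n)$ and isolates $e^{(N_\gamma-2)s_n}\|u_n\|^2$), and you rightly note that the extra work needed there to control $|u_n|_2$ via $(f_1)$--$(f_2)$ and \eqref{Sobolev-inequality} is superfluous in the $\mathcal{D}_{\gamma}^{1,2}$ setting. As a minor point, your exponents $N_\gamma-2$ and $N_\gamma$ are the correct ones (the paper's displayed exponent $m+(\gamma+1)-2$ is a typo for $m+(\gamma+1)k-2$), and your computation even identifies the limit $\|u_n\|^2\to N_\gamma\tilde{d}_0$.
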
	 

\subsection{Proof of Theorem \ref{Theorem1'}}
By Proposition \ref{P1D}, there is $\{(s_n,u_n)\} \subset \mathbb{R} \times \mathcal{D}_{\gamma,rad_{x,y}}^{1,2}\big(\mathbb{R}^{N}\big)$  satisfying
\begin{itemize}
	\item[(a)] $s_n \to 0;$ 	
	\item[(b)] $\tilde{J}(s_n,u_n) \to \tilde{d}; $
	\item[(c)] $\frac{\partial}{\partial u}\tilde{J}(s_n,u_n)\to 0$ strongly in $(\mathcal{D}_{\gamma,rad_{x,y}}^{1,2}\big(\mathbb{R}^{N}\big))^*$;
	\item[(d)] $\frac{\partial}{\partial s}\tilde{J}(s_n,u_n)\to 0$ in $\mathbb{R}$.
\end{itemize}	
By Lemma \ref{limitacao}, the sequence $(u_n)$ is bounded in $\mathcal{D}_{\gamma,rad_{x,y}}^{1,2}\big(\mathbb{R}^{N}\big)$, then for some subsequence, still denoted by itself, there is $u_0 \in \mathcal{D}_{\gamma,rad_{x,y}}^{1,2}\big(\mathbb{R}^{N}\big)$ such that
$$
u_n \rightharpoonup u_0 \quad \mbox{in} \quad \mathcal{D}_{\gamma,rad_{x,y}}^{1,2}\big(\mathbb{R}^{N}\big).
$$
The item $(d)$ implies that $\frac{\partial}{\partial u}\tilde{J}(s_n,u_n)w\to 0$ for all $w \in \mathcal{D}_{\gamma,rad_{x,y}}^{1,2}\big(\mathbb{R}^{N}\big)$, that is, 
$$
e^{(m+(\gamma+1)-2)s_n}\int_{\mathbb{R}^{N}}\nabla_{\gamma} u_n\nabla_{\gamma} w\,dz-e^{(m+(\gamma+1)k)s_n}\displaystyle \int_{\mathbb{R}^N}f(u_n)w\,dz=o_n(1).
$$
Since $s_n \to 0$, a simple computation guarantees that $u_0$ satisfies the equality below 
$$
\int_{\mathbb{R}^{N}}\nabla_{\gamma} u_0\nabla_{\gamma} w\,dz-\displaystyle \int_{\mathbb{R}^N}f(u_0)w\,dz=0,
$$
showing that $J'(u_0)w=0$ for all $w \in \mathcal{D}_{\gamma,rad_{x,y}}^{1,2}\big(\mathbb{R}^{N}\big)$, that is, $u_0$ is a critical point of $J$ in $\mathcal{D}_{\gamma,rad_{x,y}}^{1,2}\big(\mathbb{R}^{N}\big)$. We claim that $u_0 \not= 0$, because if $u_0=0$, the Lemma \ref{embedding-compact01} gives that 
\begin{equation} \label{limitD}
\int_{\{a<|u_n|<b\}}|u_n|^{p}\,dz \to 0,
\end{equation}
for all $0<a<b<+\infty$ and $p \in (2,2^{*}_{\gamma})$.

From $(f_4)-(f_5)$, given $\epsilon>0$ and $p \in (2,2^{*}_{\gamma})$, there are  $C>0$ and $a<b<+\infty$ such that
$$
\int_{\mathbb{R}^N}|f(u_n)u_n| \,dz\leq \epsilon \int_{\mathbb{R}^N}|u_n|^{2^{*}_{\gamma}}\,dz+C\int_{\{a<|u_n|<b\}}|u_n|^{p}\,dz.
$$ 
As $(u_n)$ is bounded in $\mathcal{D}_{\gamma,rad_{x,y}}^{1,2}\big(\mathbb{R}^{N}\big)$, it follows that
$$
\int_{\mathbb{R}^N}|f(u_n)u_n| \,dz\leq \epsilon M+C\int_{\{a<|u_n|<b\}}|u_n|^{p}\,dz, 
$$ 
where $M=\displaystyle \sup_{n \in \mathbb{N}}\int_{\mathbb{R}^N}|u_n|^{2^{*}_{\gamma}}\,dz$. Now, using the limit (\ref{limitD}), we see that
$$
\limsup_{n \to +\infty} \int_{\mathbb{R}^N}|f(u_n)u_n|\,dz \leq \epsilon M.
$$
Thereby, as $\epsilon$ is arbitrary, we can infer that 
$$
\lim_{n \to +\infty} \int_{\mathbb{R}^N}f(u_n)u_n\,dz=0.
$$
This limit combined with the limit $\frac{\partial}{\partial u}\tilde{J}(s_n,u_n)u_n=o_n(1)$ ensures that $u_n \to 0$ in $\mathcal{D}_{\gamma,rad_{x,y}}^{1,2}\big(\mathbb{R}^{N}\big)$. Hence, 
$$
\tilde{J}(s_n,u_n) \to 0, 
$$
which is absurd, because $\tilde{J}(s_n,u_n) \to \tilde{d}_0>0$. Therefore, $u_0$ is a nontrivial critical point of $J$ in $\mathcal{D}_{\gamma,rad_{x,y}}^{1,2}\big(\mathbb{R}^{N}\big)$. As in Section 3, we can again use the {\it Principle of Symmetric Criticality} to prove that $u_0$ is a critical point of $J$ in  $\mathcal{D}_{\gamma}^{1,2}\big(\mathbb{R}^{N}\big)$. 
\section{Proof of Theorem \ref{Theorem2}}

In this section, the energy functional associated with $(P)_a$ is given by 
\begin{equation*}
I(u)=\frac{1}{2}\int_{\mathbb{R}^{N}}\big(|\nabla_{\gamma}u|^2 +a(z)|u|^2\big) \, dz - \int_{\mathbb{R}^{N}} F(u)\, dz, \quad \forall u \in \mathcal{H}_{\gamma}^{a}(\mathbb{R}^N).
\end{equation*}

 Unfortunately the space $\mathcal{H}_{\gamma}^{a}(\mathbb{R}^N)$ does not have good compactness embedding to help us to find a nontrivial critical point $u$ for $I$. Following the same approach explored in the previous sections, we will restrict $I$ to the space $\mathcal{H}^{a}_{\gamma,rad_{x}}(\mathbb{R}^N)$. Since $I$  satisfies the mountain pass geometry in $\mathcal{H}^{a}_{\gamma,rad_{x}}(\mathbb{R}^N)$, there is $(u_n) \subset \mathcal{H}^{a}_{\gamma,rad_{x}}(\mathbb{R}^N)$ such that 
\begin{equation} \label{MP}
	I(u_n)\longrightarrow d_a ~~ \hbox{and} ~~ I'(u_n)\longrightarrow 0,
\end{equation}
where 
\begin{equation*}
d_a:=\inf_{g\in\Gamma}\max_{t\in [0,1]}I(g(t))\geq\alpha>0,
\end{equation*}
where 
\begin{equation*}
\Gamma_a=\{g\in C([0,1], \mathcal{H}^{a}_{\gamma,rad_{x}}(\mathbb{R}^N)\,:\,g(0)=0 ~ \hbox{and} ~ g(1)=v \}.
\end{equation*}

Using well known arguments, the condition $(f_7)$ helps us to prove that $(u_n)$ is bounded in $\mathcal{H}_{\gamma}^{a}(\mathbb{R}^N)$. Since $\mathcal{H}^{a}_{\gamma,rad_{x}}(\mathbb{R}^N)$ is reflexive, we can assume that there exists $u_0\in \mathcal{H}^{a}_{\gamma,rad_{x}}(\mathbb{R}^N)$ such that, up to subsequence,
\begin{description}
	\item[] $ \bullet ~~ u_n \rightharpoonup u_0 ~~ in ~~ \mathcal{H}^{a}_{\gamma,rad_{x}}(\mathbb{R}^N), $
	\item[] $ \bullet ~~ u_n \rightarrow u_0 ~~ in ~~ L_{loc}^{p}(\mathbb{R}^N) ~~ \hbox{for all} ~~ p\in(2,2^{*}_{\gamma})$,
	\item[] $ \bullet ~~ u_n \rightarrow u_0 ~~ a. e. ~~ in ~~ \mathbb{R}^N $.
\end{description}
As $I'(u_n)v = o_n(1)$ for all  $v \in \mathcal{H}^{a}_{\gamma,rad_{x}}(\mathbb{R}^N)$, the above convergences yield
$$
I'(u_0)v = 0 ~~ \hbox{for all} ~~ v \in\mathcal{H}^{a}_{\gamma,rad_{x}}(\mathbb{R}^N),
$$
from where it follows that $u_0$ is a critical point of $I$.

\begin{lemma}\label{Lions-lemma-consequence}
	
	Let $(u_n)\subset \mathcal{H}^{a}_{\gamma,rad_{x}}(\mathbb{R}^N)$ be a sequence such that
	$$
	I(u_n)\longrightarrow d_a>0 ~~ \hbox{and} ~~ I'(u_n)\longrightarrow 0.
	$$
	Then, there exist $(y_n)\subset\mathbb{R}^{k}$ and $r, \beta>0$ such that
	\begin{equation*}
	\int_{\mathbb{R}^{m}\times B_r(y_n)}|u_n|^{q}dz\geq\beta, ~~ \forall n\in\mathbb{N} .
	\end{equation*}	
\end{lemma}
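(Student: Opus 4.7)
The plan is to argue by contradiction, assuming that the conclusion fails, and then extract strong $L^p$-convergence to zero via Lemma \ref{Lions-lemma} in order to contradict the fact that $d_a > 0$.

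More precisely, suppose for contradiction that for every $r > 0$ one has
\begin{equation*}
\lim_{n \to \infty} \sup_{y \in \mathbb{R}^k} \int_{\mathbb{R}^{m} \times B_r(y)} |u_n|^{q} \, dz = 0.
\end{equation*}
First I would note that $(u_n)$ is bounded in $\mathcal{H}_{\gamma}^{1,2}(\mathbb{R}^N)$, because of the continuous embedding of $\mathcal{H}_{\gamma}^{a}(\mathbb{R}^N)$ into $\mathcal{H}_{\gamma}^{1,2}(\mathbb{R}^N)$ guaranteed by \eqref{A1}, together with the fact that condition $(f_7)$ yields boundedness of Palais-Smale sequences by the standard Ambrosetti-Rabinowitz computation. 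Then Lemma \ref{Lions-lemma} applies and gives $u_n \to 0$ in $L^{p}(\mathbb{R}^N)$ for every $p \in (2, 2^{*}_{\gamma})$.

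Next, using $(f_1)$ and $(f_2)$, given $\varepsilon > 0$ there is $C_\varepsilon > 0$ such that $|f(s) s| \leq \varepsilon |s|^2 + C_\varepsilon |s|^q$ for every $s \in \mathbb{R}$. Since $(u_n)$ is bounded in $L^{2}(\mathbb{R}^N)$ (because it is bounded in $\mathcal{H}_{\gamma}^{a}(\mathbb{R}^N)$) and $u_n \to 0$ in $L^q(\mathbb{R}^N)$, I conclude
\begin{equation*}
\limsup_{n\to\infty} \int_{\mathbb{R}^N} |f(u_n)u_n|\,dz \leq \varepsilon \sup_{n} |u_n|_2^2,
\end{equation*}
and letting $\varepsilon \to 0^+$ yields $\int_{\mathbb{R}^N} f(u_n) u_n \, dz \to 0$. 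The same argument, replacing $f(s)s$ by $F(s)$ and using $|F(s)| \leq \tfrac{\varepsilon}{2}|s|^2 + \tilde{C}_\varepsilon |s|^q$, gives $\int_{\mathbb{R}^N} F(u_n)\,dz \to 0$.

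Finally, from $I'(u_n)u_n = o_n(1)$ I obtain
\begin{equation*}
\|u_n\|^{2} = \int_{\mathbb{R}^N} f(u_n)u_n\,dz + o_n(1) \longrightarrow 0,
\end{equation*}
and therefore $I(u_n) = \tfrac{1}{2}\|u_n\|^2 - \int_{\mathbb{R}^N} F(u_n)\,dz \to 0$, contradicting $I(u_n) \to d_a > 0$. The main obstacle is just checking the uniform $L^2$-boundedness carefully (to ensure $(f_1)$--type terms are controlled) and correctly invoking Lemma \ref{Lions-lemma} for the space $\mathcal{H}_{\gamma}^{1,2}(\mathbb{R}^N)$; the rest is a routine vanishing-level argument.
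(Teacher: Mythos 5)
Your proposal is correct and follows essentially the same route as the paper: contradict the conclusion, invoke Lemma \ref{Lions-lemma} to get $u_n\to 0$ in $L^{q}(\mathbb{R}^N)$, use $(f_1)$--$(f_2)$ to show $\int_{\mathbb{R}^N}f(u_n)u_n\,dz\to 0$, and conclude from $I'(u_n)u_n=o_n(1)$ that $\|u_n\|\to 0$, hence $I(u_n)\to 0$, contradicting $d_a>0$. The only cosmetic difference is that you verify $\int_{\mathbb{R}^N}F(u_n)\,dz\to 0$ directly, whereas the paper deduces $I(u_n)\to I(0)=0$ from the norm convergence.
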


\begin{proof}
In what follows we will argue by contradiction. If the lemma is not true, we can employ Lemma \ref{Lions-lemma} to get
	$$
	u_n\to 0 ~~ \hbox{in} ~~ L^{q}(\mathbb{R}^{N}) ~~ \hbox{for} ~~ 2<q<2^{*}_{\gamma}.
	$$
	From $(f_1)$ and $(f_2)$, for each $\varepsilon>0$, there is a constant $C_{\varepsilon}>0$ such that
	\begin{equation*}
	|f(u_n)u_n|\leq \varepsilon |u_n|^2+C_{\varepsilon}|u_n|^{q}, ~~ \hbox{for all} ~~ n\in\mathbb{N}.
	\end{equation*}
	Hence,
	$$
	\int_{\mathbb{R}^{N}}|f(u_n)u_n|\, dz\leq\varepsilon C+C_{\varepsilon}\int_{\mathbb{R}^{N}}|u_n|^{q}\, dz,
	$$
and so, 
	$$
	\limsup_{n}\int_{\mathbb{R}^{N}}|f(u_n)u_n|\, dz\leq\varepsilon C,
	$$
leading to
	$$
	\lim_{n}\int_{\mathbb{R}^{N}}|f(u_n)u_n|\, dz=0.
	$$
As $I'(u_n)u_n = o_n(1)$, we have
	$$
	||u_n||_{\gamma}^{2}=I'(u_n)u_n+\int_{\mathbb{R}^{N}}|f(u_n)u_n|\, dz=o_n(1),
	$$
	that is, 
	$$
	u_n\to 0 ~~ \hbox{in} ~~ \mathcal{H}^{a}_{\gamma,rad_{x}}(\mathbb{R}^N).
	$$
	Therefore
	$$
	I(u_n)\to I(0)=0,
	$$
	which is a contradiction, because 	$I(u_n)\longrightarrow d_a>0$.
\end{proof}

\subsection{Proof of Theorem \ref{Theorem2}}

In what follows, we will divide the proof into two cases:  \\

\noindent{\bf Case 1: \eqref{A3} holds.}

In this case, we can assume that the weak limit $u_0 \in \mathcal{H}^{a}_{\gamma,rad_{x}}(\mathbb{R}^N)$ of the sequence given in \eqref{MP} is nontrivial.  Otherwise, by Lemma \ref{Lions-lemma-consequence}, there exist $(y_n)\subset\mathbb{R}^{k}$ and $r, \beta>0$ such that
\begin{equation*}
\int_{\mathbb{R}^{m}\times B_r(y_n)}|u_n|^{2} \, dz \geq\beta, ~~ \forall n\in\mathbb{N}.
\end{equation*}
Increasing $r$ if necessary, we can suppose that $y_n \in \mathbb{Z}^k$. Now, setting the sequence  
$$
v_n(z)=v_n(x,y)=u_n(x,y+y_n), ~~ \forall z=(x,y)\in\mathbb{R}^{N},
$$
it follows that $(v_n)\subset\mathcal{H}_{\gamma, rad_x}^{a}(\mathbb{R}^{N})$. As we are supposing that \eqref{A3} occurs, a simple computation shows that
$$
||v_n||_{a}=||u_n||_{a}.
$$
Hence $(v_n)$ is bounded,  and for a subsequence of $(v_n)$, still denoted by itself, there is $v\in \mathcal{H}_{\gamma, rad_x}^{a}(\mathbb{R}^{N})$ such that $v_n \rightharpoonup v$ in $\mathcal{H}_{\gamma, rad_x}^{a}(\mathbb{R}^{N})$. By Lemma \ref{embedding-compact} $v_n \rightarrow v$ in $L^{p}(\mathbb{R}^{m}\times B_r(0))$ for all $p\in(2,2^{*}_{\gamma})$ and $v_n(z) \rightarrow v(z)$ almost everywhere in $\mathbb{R}^N $. Furthermore,
$$
I(v_n)=I(u_n) ~~ \hbox{and} ~~ I'(v_n)\longrightarrow 0,
$$
from where it follows that $I'(v)w=0$ for all $w\in \mathcal{H}_{\gamma, rad_x}^{a}(\mathbb{R}^{N})$, showing that $v$ is a critical point for $I$ in $\mathcal{H}_{\gamma, rad_x}^{a}(\mathbb{R}^{N})$. On the other hand, the equality 
$$
\int_{\mathbb{R}^{m}\times B_r(0)}|v_n|^{q} \, dz=\int_{\mathbb{R}^{m}\times B_r(y_n)}|u_n|^{q} \, dz\geq\beta, ~~ \forall n\in\mathbb{N},
$$
together with the limit $v_n \rightarrow v$ in $L^{p}(\mathbb{R}^{m}\times B_r(0))$ (see Lemma \ref{embedding-compact}) gives 
\begin{equation*}
\int_{\mathbb{R}^{m}\times B_r(0)}|v|^{q} \, dz\geq\beta, ~~ \forall n\in\mathbb{N}, 
\end{equation*}
showing that $v\neq 0$. \\

\noindent{\bf Case 1: \eqref{A4} holds.} 

If \eqref{A4} holds,  the Lemma \ref{embedding-compact-coercive} implies that 
$$
u_n \rightarrow u_0 ~~ in ~~ L^{p}(\mathbb{R}^N) ~~ \hbox{for all} ~~ p\in[2,2^{*}_{\gamma}).
$$
This limit combined with Lemma \ref{embedding-compact-coercive} ensures that 
$$
\int_{\mathbb{R}^{N}}f(u_n)(u_n-u_0) \,dz\longrightarrow 0.
$$
Now, since $I'(u_n)\to 0$ and $u_n \rightharpoonup u_0$ in $\mathcal{H}_{\gamma, rad_x}^{a}(\mathbb{R}^{N})$, we find that
$$
||u_n-u_0||_{a}^{2}  =  I'(u_n)u_n + \int_{\mathbb{R}^{N}}f(u_n)u_n \,dz  - I'(u_n)u_n - \int_{\mathbb{R}^{N}}f(u_n)u_0 \,dz + o_n(1)=o_n(1).
$$
which shows that $u_n \to u_0$ in $\mathcal{H}_{\gamma, rad_x}^{a}(\mathbb{R}^{N})$. From this, 
$$
I(u_0)=d_a>0,
$$
and so, $u_0$ is a nontrivial critical point for $I$ in $\mathcal{H}_{\gamma, rad_x}^{a}(\mathbb{R}^{N})$. 

Finally, in order to conclude the proof, we must show that $u$ is a critical point for $I$ in $\mathcal{H}_{\gamma}^{a}(\mathbb{R}^{N})$. As in the  Sections 3 and 4, it is enough to apply the  {\it Principle of Symmetric Criticality}. Since the argument are the same we omit its proof.
\section{Proof of Theorem \ref{Theorem3}}
Suppose that $u\in\mathcal{H}^{a}(\mathbb{R}^N)$ is a solution of $(P)_a$. For any $R>0$, $0<r\leq R/2$, let $\eta\in C^{\infty}(\mathbb{R}^{N})$, $0\leq\eta\leq 1$ with
\begin{equation*}
\eta(z)=\left \{
\begin{array}{ccl}
1 & \mbox{if} & |z|\geq R, \\
0 & \mbox{if} &  |z|\leq R-r, \\
\end{array}
~~ \hbox{and} ~~ |\nabla\eta|\leq\frac{2}{r\sqrt{R_{\gamma}}}
\right.
\end{equation*}
where $R_{\gamma}=\max\{1, R^{2\gamma}\}$. For each $L>0$, let us define
\begin{equation*}
u_{L}=\left \{
\begin{array}{ccl}
u & \mbox{if}  & u\leq L, \\
L & \mbox{if} &  u\geq L, \\
\end{array}
\right.
\end{equation*}
$v_L=\eta^2uu_{L}^{2(\beta-1)}$ and $w_L=\eta uu_{L}^{\beta-1}$ with $\beta>1$ to be determined later on. Then 
\begin{equation*}
\nabla_{\gamma}u\nabla_{\gamma}v_{L}=\eta^{2}u_{L}^{2(\beta-1)}|\nabla_{\gamma}u|^2+2(\beta-1)\eta^{2}uu_{L}^{2\beta-3}\nabla_{\gamma}u\nabla_{\gamma}u_{L}+2\eta uu_{L}^{2(\beta-1)}\nabla_{\gamma} u\nabla_{\gamma}\eta
\end{equation*}
and choosing $v_L$ as test function, we obtain
\begin{equation*}
\begin{split}
\int_{\mathbb{R}^{N}}\eta^{2}u_{L}^{2(\beta-1)}|\nabla_{\gamma}u|^2\,dz&+2(\beta-1)\int_{\mathbb{R}^{N}}\eta^{2}uu_{L}^{2\beta-3}\nabla_{\gamma}u\nabla_{\gamma}u_{L}\,dz +\\ &\int_{\mathbb{R}^{N}}a(z)\eta^{2}u^2u_{L}^{2(\beta-1)} \, dz= \int_{\mathbb{R}^{N}} f(u)\eta^{2}uu_{L}^{2(\beta-1)} \,dz-2\int_{\mathbb{R}^{N}}\eta uu_{L}^{2(\beta-1)}\nabla_{\gamma} u\nabla_{\gamma}\eta \, dz.
\end{split}
\end{equation*}
As
\begin{equation*}
\int_{\mathbb{R}^{N}}\eta^2uu_{L}^{2\beta-3}\nabla_{\gamma}u\nabla_{\gamma}u_{L}\,dz=\int_{[u\leq L]}\eta^2u^{2(\beta-1)}|\nabla_{\gamma}u|^2 \,dz\geq 0
\end{equation*}
and $a(z)\geq a_0>0$ in $\mathbb{R}^{N}$, we derive that 
\begin{equation*}
\int_{\mathbb{R}^{N}}\eta^2u_{L}^{2(\beta-1)}|\nabla_{\gamma}u|^2\,dz+a_0\int_{\mathbb{R}^{N}}\eta^{2}u^2u_{L}^{2(\beta-1)} \, dz\leq \int_{\mathbb{R}^{N}} f(u)\eta^2 uu_{L}^{2(\beta-1)} \,dz-2\int_{\mathbb{R}^{N}}\eta uu_{L}^{2(\beta-1)}\nabla_{\gamma} u\nabla_{\gamma}\eta \, dz.
\end{equation*}
By condition $(f_1)$ and $(f_2)$, $f(u)\leq\frac{a_0}{2}u+Cu^{q-1}$, and so, 
\begin{equation*}
\int_{\mathbb{R}^{N}}\eta^2u_{L}^{2(\beta-1)}|\nabla_{\gamma}u|^2\,dz +\frac{a_0}{2}\int_{\mathbb{R}^{N}}\eta^{2}u^2u_{L}^{2(\beta-1)} \, dz \leq C\int_{\mathbb{R}^{N}} \eta^2u^qu_{L}^{2(\beta-1)} \,dz +2\int_{\mathbb{R}^{N}}\eta uu_{L}^{2(\beta-1)}|\nabla_{\gamma} u||\nabla_{\gamma}\eta| \, dz.
\end{equation*}
Using Young's inequality with $\varepsilon>0$ sufficiently small, we get  
\begin{equation}\label{By-Young}
\begin{split}
\int_{\mathbb{R}^{N}}\eta^2u_{L}^{2(\beta-1)}|\nabla_{\gamma}u|^2\,dz +\frac{a_0}{2}\int_{\mathbb{R}^{N}}\eta^{2}u^2u_{L}^{2(\beta-1)} \, dz & \leq C\int_{\mathbb{R}^{N}} \eta^2u^qu_{L}^{2(\beta-1)} \,dz \\ &+C_{\varepsilon}\int_{\mathbb{R}^{N}}u^2u_{L}^{2(\beta-1)}|\nabla_{\gamma}\eta|^2 \, dz.
\end{split}
\end{equation}
From \eqref{embedding-continuous}, 
\begin{equation*}
|w_L|_{2^{*}_{\gamma}}^{2}\leq\mathcal{C}\left(\int_{\mathbb{R}^{N}}|\nabla_{\gamma}w_L|^2\,dz+\int_{\mathbb{R}^{N}}|w_L|^2\,dz\right),
\end{equation*}
which leads to 
\begin{equation*}
\begin{split}
|w_L|_{2^{*}_{\gamma}}^{2}\leq\mathcal{C}&\int_{\mathbb{R}^{N}}\left(\eta^2u_{L}^{2(\beta-1)}|\nabla_{\gamma}u|^2+ \eta^2(\beta-1)^2u^2u_{L}^{2(\beta-2)}|\nabla_{\gamma}u_{L}|^2+u^2u_{L}^{2(\beta-1)}|\nabla_{\gamma}\eta|^2\right) \, dz \\
&+\mathcal{C}\int_{\mathbb{R}^{N}}\eta^2u^2u_{L}^{2(\beta-1)}\, dz, 
\end{split}
\end{equation*}
and thus
\begin{equation*}
|w_L|_{2^{*}_{\gamma}}^{2}\leq\mathcal{C}\beta^2\left(\int_{\mathbb{R}^{N}}\eta^2u_{L}^{2(\beta-1)}|\nabla_{\gamma}u|^2\,dz
+\frac{a_0}{2}\int_{\mathbb{R}^{N}}\eta^2u^2u_{L}^{2(\beta-1)}\,dz + \int_{\mathbb{R}^{N}}u^2u_{L}^{2(\beta-1)}|\nabla_{\gamma}\eta|^2 \, dz\right).
\end{equation*}
Then from \eqref{By-Young}
\begin{equation}\label{By-Young-Next}
|w_L|_{2^{*}_{\gamma}}^{2}\leq\mathcal{C}\beta^2\left(\int_{\mathbb{R}^{N}} (\eta uu_{L}^{\beta-1})^2u^{q-2} \,dz +\int_{\mathbb{R}^{N}}u^2u_{L}^{2(\beta-1)}|\nabla_{\gamma}\eta|^2 \,dz\right).
\end{equation}
Using H\"older inequality with exponents $\frac{{2^{*}_{\gamma}}}{q-2}$ and $\frac{\alpha^{*}}{2}:=\frac{{2^{*}_{\gamma}}}{2^{*}_{\gamma}-(q-2)}$,
\begin{equation*}
|w_L|_{2^{*}_{\gamma}}^{2}\leq\mathcal{C}\beta^2\left(\int_{\mathbb{R}^{N}} (\eta uu_{L}^{\beta-1})^{\alpha^{*}} \,dz\right)^{2/\alpha^{*}}\left(\int_{\mathbb{R}^{N}} u^{2^{*}_{\gamma}} \, dz \right)^{q-2/2^{*}_{\gamma}} +\mathcal{C}\beta^2\int_{\mathbb{R}^{N}}u^2u_{L}^{2(\beta-1)}|\nabla_{\gamma}\eta|^2 \,dz.
\end{equation*}
Since $u\in L^{2^{*}_{\gamma}}(\mathbb{R}^{N})$, we choose $\beta=\frac{{2^{*}_{\gamma}}}{\alpha^{*}}>1$ so that $u\in L^{\alpha^{*}\beta}(\mathbb{R}^{N})\cap L^{2\beta}(\mathbb{R}^{N})$, and so,
\begin{equation*}
|w_L|_{2^{*}_{\gamma}}^{2}\leq\mathcal{C}\beta^2\left(\int_{\mathbb{R}^{N}} u^{\alpha^{*}\beta} \,dz\right)^{2/\alpha^{*}} +\frac{\mathcal{C}\beta^2}{r^2}\int_{\mathbb{R}^{N}}u^{2\beta}\,dz < \infty.
\end{equation*}
Hence
\begin{equation*}
\left(\int_{[|z|\geq R]} (uu_{L}^{\beta-1})^{2^{*}_{\gamma}} \,dz\right)^{2/2_{\gamma}^{*}}\leq K <\infty.
\end{equation*}
By employing the Fatou's lemma in the variable $L$ we conclude that
\begin{equation}\label{By-Fatou}
\int_{[|z|\geq R]} u^{\frac{(2^{*}_{\gamma})^2}{\alpha^{*}}} \,dz <\infty.
\end{equation}
Once again from inequality \eqref{By-Young-Next}, 
\begin{equation*}
|w_L|_{2^{*}_{\gamma}}^{2}\leq\mathcal{C}\beta^2\left(\int_{[|z|\geq R-r]} u^{q-2}u^{2\beta} \, dz +
\int_{[R\geq |z|\geq R-r]}u^{2\beta} \, dz\right).
\end{equation*}
We now consider $t=\frac{(2^{*}_{\gamma})^2}{\alpha^{*}(q-2)}>1$ and $\beta=\frac{2^{*}_{\gamma}(t-1)}{2t}>1$. Then by H\"older inequality with exponents $t/(t-1)$ and $t$,
\begin{eqnarray*}
|w_L|_{2^{*}_{\gamma}}^{2}& \leq & \mathcal{C}\beta^2\biggl[\biggl(\int_{[|z|\geq R-r]} u^{(q-2)t} \, dz\biggr)^{1/t}\biggl(\int_{[|z|\geq R-r]} u^{2\beta t/(t-1)} \, dz\biggr)^{(t-1)/t} \\ 
& & +\biggl(\int_{[R\geq |z|\geq R-r]}1 \, dz\biggr)^{1/t} \biggl(\int_{[R\geq |z|\geq R-r]}u^{2\beta t/(t-1)} \, dz\biggr)^{(t-1)/t}\biggr].
\end{eqnarray*}
It follows from \eqref{By-Fatou} that
\begin{equation}\label{By-Fatou-Next}
|w_L|_{2^{*}_{\gamma}}^{2}\leq\mathcal{C}\beta^2\left(\int_{[|z|\geq R-r]} u^{2\beta t/(t-1)} \, dz\right)^{(t-1)/t}.
\end{equation}
On the other hand, 
\begin{equation*}
\biggl(\int_{[|z|\geq R]} u_{L}^{2^{*}_{\gamma}\beta} \, dz\biggr)^{2/2^{*}_{\gamma}}\leq\left(\int_{\mathbb{R}^{N}} (\eta uu_{L}^{\beta-1})^{2_{\gamma}^{*}} \,dz\right)^{2/2_{\gamma}^{*}}=|w_{L}|_{2^{*}_{\gamma}}^2.
\end{equation*}
This combined with \eqref{By-Fatou-Next} leads to
\begin{equation*}
|u_{L}|^{2\beta}_{L^{2^{*}_{\gamma}\beta}([|z|\geq R])}\leq\mathcal{C}\beta^2\left(\int_{[|z|\geq R-r]} u^{2\beta t/(t-1)} \, dz\right)^{(t-1)/t}=\mathcal{C}\beta^2|u|^{2\beta}_{L^{2^{*}_{\gamma}}([|z|\geq R-r])}.
\end{equation*}
Hence, by Fatou's lemma 
\begin{equation*}
|u|^{2\beta}_{L^{2^{*}_{\gamma}\beta}([|z|\geq R])}\leq\mathcal{C}\beta^2|u|^{2\beta}_{L^{2^{*}_{\gamma}}([|z|\geq R-r])}.
\end{equation*}
Setting  $\chi:=\frac{2^{*}_{\gamma}(t-1)}{2t}$ and $s:=\frac{2t}{t-1}$, the last inequality allows to conclude that
\begin{equation*}
|u|_{L^{\chi^{m+1}s}([|z|\geq R])}\leq\mathcal{C}^{\sum_{i=1}^{m}\chi^{-i}}\chi^{\sum_{i=1}^{m}i\chi^{-i}}\leq|u|_{L^{2^{*}_{\gamma}}([|z|\geq R-r])}.
\end{equation*}
Letting $m\to\infty$ we obtain
\begin{equation*}
|u|_{L^{\infty}([|z|\geq R])}\leq|u|_{L^{2^{*}_{\gamma}}([|z|\geq R-r])},
\end{equation*}
showing that $\displaystyle \lim_{|z|\to\infty}u(z)=0$.

Finally, in order to show that $u\in L^{\infty}(\mathbb{R}^{N})$, we need only show that the above inequality is true on balls of $\mathbb{R}^{N}$, that is, there exist $R>0$ and $\mathcal{C}>0$ such that
\begin{equation*}
|u|_{L^{\infty}(B_{R}(z))}\leq|u|_{L^{2^{*}_{\gamma}}(B_{2R}(z))}, ~ \forall ~ z\in\mathbb{R}^{N}. 
\end{equation*}
The argument follows with some adjusts from the ideas found in Alves and Germano \cite[ Lemma 2.13]{Alves&Germano2020PA}.

\noindent {\it Claudianor O. Alves and Angelo R. F. de Holanda} \\
	Universidade Federal de Campina Grande, \\
	Unidade Acad\^emica de Matem\'atica, \\	
	58429-970, Campina Grande - PB - Brazil. \\
	e-mail: coalves@mat.ufcg.edu.br,\, angelo@mat.ufcg.edu.br
\end{document}